\let\oldtocsection=\tocsection
\let\oldtocsubsection=\tocsubsection
\renewcommand{\tocsection}[2]{\hspace{0em}\oldtocsection{#1}{#2}}
\renewcommand{\tocsubsection}[2]{\hspace{1em}\oldtocsubsection{#1}{#2}}
\tikzset{node distance=3cm, auto}
\def\@secnumfont{\bfseries}
\def\section{\@startsection{section}{1}%
  \z@{.7\linespacing\@plus\linespacing}{.5\linespacing}%
  {\normalfont\Large\bfseries}}
\def\subsection{\@startsection{subsection}{2}%
  \z@{.5\linespacing\@plus.7\linespacing}{-.5em}%
  {\normalfont\large\bfseries}}
  \def\subsubsection{\@startsection{subsubsection}{3}%
  \z@{.5\linespacing\@plus.7\linespacing}{-.5em}%
  {\normalfont\bfseries}}
\theoremstyle{plain}
\newtheorem{thm}{Theorem}[section]
\newtheorem{lemma}[thm]{Lemma}
\newtheorem{prop}[thm]{Proposition}
\newtheorem{cor}[thm]{Corollary}
\newtheorem{question}[thm]{Question}
\newtheorem{remark}[thm]{Remark}
\newtheorem{addendum}[thm]{Addendum}
\newtheorem{disclaimer}[thm]{Disclaimer}
\newtheorem{assumption}[thm]{Assumption}
\newtheorem{conjecture}[thm]{Conjecture}
\theoremstyle{definition}
\newtheorem{definition}[thm]{Definition}
\newtheorem{example}[thm]{Example}
\theoremstyle{remark}
\numberwithin{equation}{section}
\newcommand{\Z}{\mathbb{Z}}
\newcommand{\R}{\mathbb{R}}
\newcommand{\Q}{\mathbb{Q}}
\newcommand{\C}{\mathbb{C}}
\newcommand{\CP}{\mathbb{CP}}
\newcommand{\D}{\mathbb{D}}
\newcommand{\K}{\mathbb{K}}
\newcommand{\lam}{\lambda}
\newcommand{\la}{\lambda}
\renewcommand{\sc}{\op{SC}}
\newcommand{\eps}{\varepsilon}
\newcommand{\calF}{\mathcal{F}}
\newcommand{\calA}{\mathcal{A}}
\newcommand{\calL}{\mathcal{L}}
\newcommand{\calQ}{\mathcal{Q}}
\renewcommand{\bar}{\mathcal{B}}
\newcommand{\X}{\mathcal{X}}
\newcommand{\A}{\mathcal{A}}
\newcommand{\bdy}{\partial}
\newcommand{\Ai}{\mathcal{A}_\infty}
\newcommand{\Li}{\mathcal{L}_\infty}
\newcommand{\calM}{\mathcal{M}}
\newcommand{\dcalM}{\dashover{\mathcal{M}}}
\newcommand{\wh}{\widehat}
\newcommand{\wt}{\widetilde}
\newcommand{\ovl}{\overline}
\newcommand{\op}[1]{{\operatorname{#1}}}
\newcommand{\e}{\epsilon}
\newcommand{\Sh}{{\op{Sh}}}
\newcommand{\Eval}{{\op{Eval}}}
\newcommand{\sh}{\op{SH}}
\newcommand{\Lam}{\Lambda}
\newcommand{\Lamo}{\Lambda_{\geq 0}}
\newcommand{\chlin}{\op{CH}_{\op{lin}}}
\newcommand{\ch}{\op{CH}}
\newcommand{\lin}{{\op{lin}}}
\newcommand{\ip}{\, \lrcorner \,}
\newcommand{\cz}{{\op{CZ}}}
\newcommand{\ind}{\op{ind}}
\renewcommand{\ll}{\llbracket}
\newcommand{\rr}{\rrbracket}
\newcommand{\cha}{{\op{CHA}}}
\newcommand{\ev}{{\op{ev}}}
\newcommand{\Op}{\mathcal{O}p}
\newcommand{\nil}{\varnothing}
\newcommand{\sss}{\vspace{2.5 mm}}
\renewcommand{\leqq}{\preceq}
\newcommand{\orbset}{\mathfrak{S}}
\newcommand{\sign}{\diamondsuit}
\newcommand{\aug}{\e}
\newcommand{\auglin}{\e_\lin}
\newcommand{\fv}{\mathfrak{h}}
\newcommand{\capac}{\mathfrak{c}}
\newcommand{\dapac}{\mathfrak{d}}
\newcommand{\gapac}{\mathfrak{g}}
\newcommand{\rapac}{\mathfrak{r}}
\newcommand{\bb}{\frak{b}}
\newcommand{\fix}{{\op{fix}}}
\newcommand{\triv}{\Xi}
\newcommand{\m}{\mathfrak{m}}
\newcommand{\cl}{\mathfrak{cl}}
\newcommand{\cllin}{{\mathfrak{cl}_\lin}}
\newcommand{\gw}{\op{GW}}
\renewcommand{\lll}{\Langle}
\newcommand{\rrr}{\Rangle}
\newcommand{\smx}{\oset[-.5ex]{\frown}{\times}}
\newcommand{\sk}{{\op{sk}}}
\newcommand{\T}{\mathcal{T}}
\newcommand{\TT}{\T^{\bullet}} 
\newcommand{\fib}{g}
\newcommand{\hooksymp}{\overset{s}\hookrightarrow}
\newcommand{\NI}{{\noindent}}
\newcommand{\ra}{\rightarrow}
\renewcommand{\tt}{\frak{t}}
\newcommand{\ga}{\gamma}
\newcommand{\stab}{\op{stab}}
\newcommand{\wind}{\op{wind}}
\newcommand{\oset}[3][0ex]{%
  \mathrel{\mathop{#3}\limits^{
    \vbox to#1{\kern-2\ex@
    \hbox{$\scriptstyle#2$}\vss}}}}
\newcommand{\dashover}[2][\mathop]{#1{\mathpalette\df@over{{\dashfill}{#2}}}}
\newcommand{\fillover}[2][\mathop]{#1{\mathpalette\df@over{{\solidfill}{#2}}}}
\newcommand{\df@over}[2]{\df@@over#1#2}
\newcommand\df@@over[3]{%
  \vbox{
    \offinterlineskip
    \ialign{##\cr
      #2{#1}\cr
      \noalign{\kern1pt}
      $\m@th#1#3$\cr
    }
  }%
}
\newcommand{\dashfill}[1]{%
  \kern-.5pt
  \xleaders\hbox{\kern.5pt\vrule height.4pt width \dash@width{#1}\kern.5pt}\hfill
  \kern-.5pt
}
\newcommand{\dash@width}[1]{%
  \ifx#1\displaystyle
    2pt
  \else
    \ifx#1\textstyle
      1.5pt
    \else
      \ifx#1\scriptstyle
        1.25pt
      \else
        \ifx#1\scriptscriptstyle
          1pt
        \fi
      \fi
    \fi
  \fi
}
\newcommand{\solidfill}[1]{\leaders\hrule\hfill}
\begin{document}

\title{Higher symplectic capacities}

\begin{abstract}
We construct new families of symplectic capacities indexed by certain symmetric polynomials, defined using rational symplectic field theory.
In particular, we introduce a sequence of capacities based on an $\Li$ structure on linearized contact homology and rational curve counts with local tangency constraints.
We prove various structural properties of these capacities and give some preliminary computations which show that they give state of the art symplectic embedding obstructions in basic examples.
\end{abstract}

\author{Kyler Siegel}

\maketitle

\tableofcontents

\section{Introduction}\label{sec:intro}

\subsection{Context}
\subsubsection{Symplectic embeddings}
Given a symplectic manifold $(M^{2n},\omega)$ and a function $H: M \rightarrow \R$,
flowing along the symplectic gradient of $H$ defines a family of symplectomorphisms from $M$ to itself.
Such Hamiltonian dynamical systems provide the geometric basis of classical mechanics and appear in various guises throughout mathematics and physics. 
It has been known since the 19th century that symplectomorphisms preserve the canonical volume form
$\tfrac{1}{n!}\omega^{\wedge n}$, a result known as Liouville's theorem in statistical mechanics.
However, in his seminal 1985 paper \cite{gromov1985pseudo} on pseudoholomorphic curves, Gromov showed that symplectomorphisms are much more subtle and mysterious than volume-preserving maps.
The first proof of concept for this underlying richness is Gromov's celebrated ``nonsqueezing theorem'',
which states that a large ball cannot be symplectically squeezed into a narrow infinite cylinder.
More precisely, for $0 < a' < a$ there is no symplectic embedding $$B^{2n}(a) \hookrightarrow B^2(a') \times \C^{n-1}.$$
Here both sides are equipped with their canonical symplectic structures coming from the restriction of the standard symplectic form $\sum_{i=1}^ndx_i \wedge dy_i$ on $\C^n$, and $B^{2n}(a)$ denotes the ball of area $a$ (i.e. radius $\sqrt{a/\pi}$).
Such an embedding is perfectly allowed by volume considerations alone, so this shows that symplectic geometry imposes fundamental and previously unanticipated constraints. 

Understanding the precise nature of symplectic transformations is still a largely open field of research.
After the nonsqueezing theorem, a natural next toy question is when can one symplectic ellipsoid be embedded into another.
We define the symplectic ellipsoid $E(a_1,...,a_n)$ by 
$$ E(a_1,...,a_n) := \left\{(z_1,...,z_n) \in \C^n \;:\; \sum_{k=1}^n \frac{\pi |z_k|^2}{a_k} \leq 1\right\}.$$
For convenience, we will typically assume $a_1 \leq ... \leq a_n$.
Note that we have the special cases $E(a,...,a) = B^{2n}(a)$ and $E(a,\infty,...,\infty) = B^2(a) \times \C^{n-1}$.
\begin{question}\label{question:ell_emb}
For which $a_1,...,a_n$ and $a_1',...,a_n'$ is there a symplectic embedding of $E(a_1,...,a_n)$ into $E(a_1',...,a_n')$?
\end{question}

\noindent

Since the volume of $E(a_1,...,a_n)$ is given by $\tfrac{1}{n!}a_1...a_n$, the first basic obstruction is $$a_1...a_n \leq a_1'...a_n'.$$
Also, Gromov's nonsqueezing theorem can be rephrased as saying that there is no symplectic embedding
$$E(a,...,a) \hookrightarrow E(a',\infty,...,\infty)$$
unless $a \leq a'$,
and this implies that in general we must have
$$a_1 \leq a_1'.$$

\subsubsection{Symplectic capacities}
For $n = 1$, an embedding $B^2(a_1) \hooksymp B^2(a_1')$ evidently exists if and only if $a_1 \leq a_1'$.
For $n = 2$, the answer to Question \ref{question:ell_emb} is now completely understood, or at least reduced to pure combinatorics.
Before recalling the result, it is useful to formalize symplectic embedding obstructions using the notion of a symplectic capacity.
Roughly, a symplectic capacity $\capac$ is any invariant of symplectic manifolds which takes values in positive real numbers and is monotone with respect to symplectic embeddings.
More precisely, the symplectic capacities considered in this paper will be defined for symplectic manifolds with nonempty convex boundary
(although one could formally extend them to all symplectic manifolds if desired).
 We will also distinguish between ``exact symplectic capacities'', which are monotone only for exact symplectic embeddings, and ``non-exact symplectic capacities'', which are monotone for all symplectic embeddings.\footnote{See Definition~\ref{def:exact_symp_emb} for the precise definition of exact symplectic embedding.} 
 By default, ``capacity'' will mean exact, but we note that this distinction is immaterial e.g. for Liouville domains with $H^1(X;\R) = 0$ (see Remark~\ref{rmk:auto_exact}).
One often also requires a symplectic capacity $\capac$ to scale like area, i.e. $\capac(M,a\omega) = a\capac(M,\omega)$, which rules out symplectic volume in higher dimensions. 
This property will indeed hold for all the capacities discussed in this paper.
Finally, some authors include the normalization condition $\capac(B^{2n}) = \capac(B^2 \times \C^{n-1}) = 1$, but this property will {\em not} hold for most of the capacities considered in this paper.\footnote{In fact, the ``strong Viterbo conjecture'' states that there should be a {\em unique} capacity satisfying this normalization condition, when restricted to convex domains in $\R^{2n}$.}

An important sequence of symplectic capacities $\capac_1^\op{EH},\capac_2^\op{EH},\capac_3^\op{EH},...$ was defined by Ekeland--Hofer in \cite{EH1,EH2} using variational methods. The first Ekeland--Hofer capacity satisfies $$\capac_1^\op{EH}(B^{2n}(r)) = \capac_1^\op{EH}(B^2(r) \times \C^{n-1}) = r,$$
and so in particular recovers Gromov's nonsqueezing theorem. 
More generally, for $k \in \Z_{> 0}$, the $k$th Ekeland--Hofer capacity of the ellipsoid $E(a_1,...,a_n)$ has been computed to be the $k$th smallest element in the infinite matrix $(ia_j\;:\; i \in \Z_{> 0}, 1 \leq j \leq n)$.
For example, the first few values for $E(1,2)$ are $1,2,2,3,4,4,5,...$ and the first few values for $E(1.5,1.5)$ are $1.5,1.5,3,3,4.5,4.5,6,...$,
and hence $c_2^\op{EH}$ rules out the existence of a symplectic embedding $E(1,2) \hookrightarrow E(1.5,1.5)$.

Although the Ekeland--Hofer capacities give various obstructions beyond Gromov's nonsqueezing theorem, the four dimensional ellipsoid embedding problem is controlled by another sequence of symplectic capacities $\capac_1^\op{ECH},\capac_2^\op{ECH},\capac_3^\op{ECH},...$ defined by Hutchings in \cite{Hutchings_quantitative_ECH} using embedded contact homology (ECH).
Unlike the Ekeland--Hofer capacities, these capacities are gauge-theoretic in nature and are only defined in dimension four, but in that dimension they tend to give very strong obstructions.
The $k$th ECH capacity of the ellipsoid $E(a,b)$ has been computed to be the  $(k+1)$st smallest element in the infinite matrix $(ia + jb \; : \; i,j \in \Z_{\geq 0})$.
For example, the first few values for $E(1,2)$ are $1,2,2,3,3,4,4,4,...$
and the first few values for $E(1.5,1.5)$ are $1.5,1.5,3,3,3,4.5,4.5,4.5,4.5,...$, etc.
McDuff proved the following result in \cite{McDuff_Hofer_conjecture} (see the references therein for the history):
\begin{thm}
There is a symplectic embedding of $E(a,b)$ into $E(c,d)$ if and only if $\capac_k^\op{ECH}(E(a,b)) \leq \capac_k^\op{ECH}(E(c,d))$ for all $k \in \Z_{> 0}$.
\end{thm}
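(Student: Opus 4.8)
The ``only if'' direction is immediate from monotonicity: since each $\capac_k^{\op{ECH}}$ is a symplectic capacity, a symplectic embedding $E(a,b) \hookrightarrow E(c,d)$ forces $\capac_k^{\op{ECH}}(E(a,b)) \leq \capac_k^{\op{ECH}}(E(c,d))$ for all $k$. So the plan is entirely about the ``if'' direction, and the strategy is to translate the ellipsoid embedding problem into a ball-packing problem and then appeal to the fact --- special to dimension four --- that ball-packing obstructions coming from pseudoholomorphic curves are sharp.

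\emph{Reductions.} First I would rescale to reduce to deciding when $E(1,a) \hookrightarrow c\,E(1,b)$ with $1 \le a$ and $1 \le b$, and then use a continuity argument to reduce to the case that $a$ and $b$ are rational; here one must be a little careful about open versus closed ellipsoids and approximate the parameters appropriately, using that $\op{int}\,E(1,a) \hookrightarrow c\,E(1,b)$ can be detected by embeddings of the closed ellipsoids with slightly perturbed parameters, together with continuity of ECH capacities in those parameters.

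\emph{From ellipsoids to ball packings.} Next I would invoke McDuff's weight-expansion technique. To a rational $a \ge 1$ one associates a finite weight sequence $\mathbf{w}(a) = (w_1,\dots,w_N)$, read off from the continued fraction of $a$, and the key geometric fact is that $\op{int}\,E(1,a)$ symplectically embeds into a ball $B(\mu)$ if and only if the disjoint union $\bigsqcup_i \op{int}\,B(w_i)$ does. One direction of this is proved by ``resolving'' an ellipsoid embedding into a ball packing via iterated symplectic blow-ups, the other by reconstructing the ellipsoid embedding from the packing via symplectic inflation. The same mechanism turns the problem $E(1,a) \hookrightarrow c\,E(1,b)$ into a comparison of the two ball packings $\bigsqcup B(w_i(a))$ and $c\bigsqcup B(w_j(b))$ sitting inside a common large ball, i.e.\ a packing problem in a blow-up of $\CP^2$.

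\emph{Sharpness of packing obstructions and the combinatorial endgame.} The main work, and the step I expect to be the real obstacle, is showing that the evident $J$-holomorphic curve obstructions to such ball packings are sharp. Working on $X = \CP^2 \# N\,\overline{\CP}^2$, positivity of intersections shows that a homology class represented by a $J$-holomorphic curve for a compatible $J$ constrains the areas of any embedded balls; conversely, Taubes' equality of Seiberg--Witten and Gromov invariants, together with the description of the symplectic cone of $X$, guarantees that every homology class satisfying the relevant numerical constraints --- ``Cremona-reduced'', nonnegative self-intersection, positive pairing with the canonical class --- is actually represented by an embedded or nodal $J$-holomorphic curve. This yields a finite system of linear inequalities that is exactly the obstruction to the packing. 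Finally I would match this system with the ECH capacity inequalities: using the formula that $\capac_k^{\op{ECH}}(E(p,q))$ is the $(k{+}1)$st smallest element of $(ip + jq : i,j \in \Z_{\geq 0})$ and the compatibility of ECH capacities with the weight decomposition, the packing inequalities for $\bigsqcup B(w_i(a)) \hookrightarrow c\bigsqcup B(w_j(b))$ reduce to the assertion $\capac_k^{\op{ECH}}(E(1,a)) \le \capac_k^{\op{ECH}}(c\,E(1,b))$ for all $k$ --- an elementary statement about continued fractions and counting lattice points. Every step other than the sharpness of packing obstructions is essentially formal once that ingredient (Taubes--Seiberg--Witten, hence genuinely four-dimensional) is available.
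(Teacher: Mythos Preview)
The paper does not prove this theorem at all: it is stated as a result of McDuff, with a citation to \cite{McDuff_Hofer_conjecture} and the remark ``see the references therein for the history.'' There is no proof in the paper to compare against.

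Your outline is a reasonable sketch of the actual strategy behind McDuff's theorem (weight expansions, reduction to ball packings of blow-ups of $\CP^2$, and sharpness of obstructions via Seiberg--Witten/Taubes input), and the ``only if'' direction is indeed immediate from monotonicity of ECH capacities. One remark: the final step of matching the ball-packing obstructions with the ECH capacity inequalities is not quite as formal as you suggest, and in the literature it goes through Hutchings' computation of ECH capacities of disjoint unions of balls together with the fact that $\capac_k^{\op{ECH}}(E(1,a))$ agrees with the $k$th ECH capacity of the ball packing $\bigsqcup_i B(w_i(a))$; this is the content of results in \cite{Hutchings_quantitative_ECH} rather than a purely combinatorial identity about continued fractions. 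But since the present paper simply quotes the theorem, none of this is needed here.
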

Note that up to a scaling factor we can write $E(a,b)$ as $E(1,x)$ for some $x \geq 1$.
In the special case $c = d$ (i.e. the target is a ball), the answer can be summarized in terms of a function $f: [1,\infty) \rightarrow [1,\infty)$,
with $f(x)$ defined to be the infimal $c$ such that there exists a symplectic embedding $E(1,x) \hookrightarrow E(c,c)$.
McDuff--Schlenk \cite{McDuff-Schlenk_embedding} gave an explicit characterization of the function $f(x)$, discovering the following rather subtle features:
\begin{itemize}
\item for $x < \tau^4$, $f(x)$ is piecewise linear, with values determined by ratios of Fibonacci numbers (here $\tau = \frac{1+\sqrt{5}}{2}$ the golden ratio)
\item for $x > (\tfrac{17}{6})^2$, $f(x) = \sqrt{x}$ (i.e. embeddings in this regime are obstructed only by volume)
\item for $\tau^4 < x < (\frac{17}{6})^2$, $f(x)$ alternates between the above two behaviors.
\end{itemize}

\subsubsection{Stabilized symplectic embeddings}\label{subsubsec:stab}

In higher dimensions, Question \ref{question:ell_emb} is still largely open.
One surprising development was Guth's construction \cite{Guth_polydisks} of symplectic embeddings $$E(1,\underbrace{S,...,S}_{n-1}) \hookrightarrow E(R,R,\underbrace{\infty,...,\infty}_{n-2})$$
for some finite $R$ and $S$ arbitrarily large. 
This shows that an infinite amount of squeezing is possible in the second factor, in significant contrast to the first factor.
Hind and Kerman managed to refine Guth's embedding and obtained the following optimal result:
\begin{thm}\cite{Hind-Kerman_new_obstructions, Pelayo-Ngoc_hofer_question}\footnote{
More precisely, it was shown in \cite{Pelayo-Ngoc_hofer_question} that the existence of a symplectic embedding 
$E(1,\underbrace{S,...,S}_{n-1}) \hookrightarrow E(R,R,\underbrace{\infty,...,\infty}_{n-2})$ for all sufficiently large $S$ implies the existence of a symplectic embedding
$E(1,\underbrace{\infty,...,\infty}_{n-1}) \hookrightarrow E(R,R,\underbrace{\infty,...,\infty}_{n-2})$.
In the sequel we will often not distinguish between these two.
}
\label{thm:HK}
There exists a symplectic embedding $$E(1,\underbrace{\infty,...,\infty}_{n-1}) \hookrightarrow E(R,R,\underbrace{\infty,...,\infty}_{n-2})$$ if and only if $R \geq 3$.
\end{thm}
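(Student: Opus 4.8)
The statement splits into an existence half, that $R\geq 3$ suffices, and an obstruction half, that $R<3$ is forbidden; only the second is deep. For the existence half I would simply invoke the known construction. By the reduction of Pelayo--Ngoc quoted in the footnote it is enough to produce, for every $R>3$ and every $S>0$, a symplectic embedding $E(1,\underbrace{S,\dots,S}_{n-1})\hookrightarrow E(R,R,\underbrace{\infty,\dots,\infty}_{n-2})$, and this is exactly what Guth's construction \cite{Guth_polydisks}, refined to its optimal form by Hind and Kerman \cite{Hind-Kerman_new_obstructions} via an intricate symplectic folding argument, achieves; the number $3$ emerges as the threshold at which the folding pattern can be made to close up, and the endpoint $R=3$ then follows by a routine limiting argument. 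I would not reproduce the folding estimates.

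For the obstruction half I would depart from the original Hind--Kerman proof, which ran an \emph{ad hoc} family of punctured holomorphic curves directly, and instead appeal to one of the capacities constructed in this paper. One fixes the capacity $\gapac_b$ attached to a suitable symmetric polynomial $b$ --- concretely the one whose underlying moduli problem counts rigid genus-zero curves in a completed ellipsoid, with one positive puncture, carrying a single point-tangency constraint with the order tuned to the dimension. Since ellipsoids have vanishing first cohomology the exact/non-exact distinction is immaterial here, so the monotonicity property proved earlier turns a symplectic embedding $E(1,\infty,\dots,\infty)\hookrightarrow E(R,R,\infty,\dots,\infty)$ into
\[
\gapac_b\big(E(1,\underbrace{\infty,\dots,\infty}_{n-1})\big)\ \leq\ \gapac_b\big(E(R,R,\underbrace{\infty,\dots,\infty}_{n-2})\big).
\]
It therefore suffices to establish the two inequalities $\gapac_b(E(1,\infty,\dots,\infty))\geq 3$ and $\gapac_b(E(R,R,\infty,\dots,\infty))\leq R$, for then $3\leq R$.

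I would read off both inequalities from the combinatorial description of the capacities of ellipsoids worked out in this paper, which presents $\gapac_b(E(a_1,\dots,a_n))$ as a minimum of symplectic actions of (iterated) Reeb orbits on the boundary, taken over those orbits whose Conley--Zehnder index matches the index of the constraint. The target side is the routine one: in the completion of $E(R,R,\infty,\dots,\infty) = E(R,R)\times\C^{n-2}$ one places the constrained point in the $E(R,R)$-slice, the index condition singles out a low-degree capped holomorphic plane lying in that slice, and its symplectic area $R$ gives the upper bound. The source side carries the content: in $B^2(1)\times\C^{n-1}$ only the $B^2(1)$-factor is bounded, so every curve that can absorb the constraint must wrap the short Reeb orbit of $\partial B^2(1)$, and --- this is the point at which the combinatorics are sensitive to there being a single bounded factor --- the index arithmetic pins the multiplicity, and hence the action, at $3$, recovering the Hind--Kerman threshold. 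Combining the two inequalities gives $3\leq R$; moreover the folding construction of the existence half shows that $3$ is also an upper bound for $\gapac_b(B^2(1)\times\C^{n-1})$, so the value is sharp and the two halves of the theorem fit together.

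The main obstacle is the lower bound $\gapac_b(B^2(1)\times\C^{n-1})\geq 3$ --- equivalently, the assertion that no cheaper configuration of curves can realize the constraint. Making this precise requires: (i) that the punctured moduli spaces with a tangency constraint are genuinely cut out by the perturbation/virtual framework set up earlier, so that $\gapac_b$ is well defined and the relevant count is forced to be nonzero; (ii) an SFT neck-stretching along a hypersurface separating $B^2(1)$ from the $\C^{n-1}$-directions, together with the compactness analysis needed to exclude the degenerate limit buildings --- extra symplectization levels, multiply covered components, and components escaping to infinity in the unbounded directions --- that could a priori carry less action; and (iii) the bookkeeping with Conley--Zehnder indices and asymptotic Reeb-orbit partitions that converts the tangency order into the numerical value $3$. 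Items (i) and (ii) are exactly where the technical weight of the paper's foundational setup is used.
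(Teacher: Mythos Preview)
The paper does not prove Theorem~\ref{thm:HK}; it is quoted from the literature (\cite{Hind-Kerman_new_obstructions, Pelayo-Ngoc_hofer_question}) as background for the stabilized embedding problem. So there is no ``paper's own proof'' to compare against --- only the question of whether your proposed argument via the paper's capacities actually goes through.

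Your strategy for the obstruction half is sound in outline but wrong in the details. You claim that a \emph{single} capacity $\gapac_{\bb}$, built from one tangency constraint ``tuned to the dimension'', takes the value $3$ on $E(1,\infty,\dots,\infty)$ and the value $R$ on $E(R,R,\infty,\dots,\infty)$. This is not what happens. By the stabilization property (Proposition~\ref{prop:stabilization_for_gapac}) the capacities are dimension-independent, so the problem reduces to comparing $\gapac\lll\T^{k-1}p\rrr(E(1,x))$ for large $x$ with $\gapac\lll\T^{k-1}p\rrr(B^4(R))$. From Propositions~\ref{prop:ellipsoid_computation} and~\ref{prop:rapac_of_ball} (and its tangency analogue) these are $k$ and $R\lceil(k+1)/3\rceil$ respectively. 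For $k = 3d-1$ the resulting inequality is $3d-1 \leq dR$, i.e.\ $R \geq 3 - 1/d$. No finite $k$ gives the sharp bound; you need the whole sequence $d\to\infty$ to conclude $R\geq 3$. The number $3$ arises as an asymptotic ratio of action to index, not from ``index arithmetic pinning the multiplicity at $3$'' for one curve.

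So the fix is easy --- replace the single capacity by the family $\gapac\lll\T^{3d-2}p\rrr$ and take a limit --- but your account of the mechanism is incorrect, and the elaborate neck-stretching analysis you sketch in items (i)--(iii) is beside the point: once stabilization reduces to dimensions two and four, the values are read off directly from the paper's computations.
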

More recent subsequent progress has focused on the following special case of Question \ref{question:ell_emb}:
\begin{question}[``stabilized ellipsoid embedding problem'']\label{question:stab_ell}
For which $a,b,c,d$ is there a symplectic embedding of $E(a,b,\underbrace{\infty,...,\infty}_{n-2})$ into $E(c,d,\underbrace{\infty,...,\infty}_{n-2})$?
\end{question}
Note that any symplectic embedding $E(a,b) \hookrightarrow E(c,d)$ can be stabilized to an embedding
$E(a,b,\infty,...,\infty) \hookrightarrow E(c,d,\infty,...,\infty)$, but the converse need not hold.
Indeed, the volume obstruction disappears after stabilizing, as do many of the ECH obstructions.
For the special case $c = d$, i.e. embeddings of the form
$E(1,x,\infty,...,\infty) \hookrightarrow E(c,c,\infty,...,\infty)$,
notable progress has been made, for instance we have:
\begin{thm}\cite{Hind-Kerman_new_obstructions, CG-Hind_products, Ghost_stairs_stabilize, Mcduff_remark}\label{thm:stab_ell_known_results}
For $1 \leq x \leq \tau^4$, the optimal $c$ is given by $f(x)$, i.e. the optimal embedding comes from the four dimensional case. 
For $x = b_n = \tfrac{\fib_{4n+6}}{\fib_{4n+2}}$ with $\fib_i$ the $i$th Fibonacci number (e.g. $b_0 = 8$, $b_1 = 55/8$, etc),
or for $x = 3d-1$ with $d \in \Z_{> 0}$, the optimal $c$ is given by $\tfrac{3x}{x+1}$.
\end{thm}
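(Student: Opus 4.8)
The plan is to establish matching upper and lower bounds for $g(x)$, defined as the infimum of those $c$ admitting a symplectic embedding $E(1,x,\underbrace{\infty,\dots,\infty}_{n-2})\hookrightarrow E(c,c,\underbrace{\infty,\dots,\infty}_{n-2})$; the assertion is that $g(x)=f(x)$ for $1\le x\le\tau^4$ and that $g(x)=\tfrac{3x}{x+1}$ at the listed values. (Since the ghost-stair values $\tfrac{3b_n}{b_n+1}$ all lie in $(\tau^2,\tfrac{8}{3}]\subset[1,\tau^4]$, for those the second claim refines the first.) The upper bound $g(x)\le f(x)$ on $[1,\tau^4]$ is immediate: take a McDuff--Schlenk symplectic embedding $E(1,x)\hookrightarrow E(f(x)+\eps,f(x)+\eps)$ and take its product with the identity on the extra $\C^{n-2}$ factors, using the identification $E(a_1,a_2,\infty,\dots,\infty)=E(a_1,a_2)\times\C^{n-2}$. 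For the special values one instead needs explicit higher-dimensional embeddings into $E(\tfrac{3x}{x+1}+\eps,\tfrac{3x}{x+1}+\eps,\infty,\dots,\infty)$: for $x=3d-1$ this comes from an iterated symplectic folding of the long $x$-axis $d$ times into the extra $\C$-factors (as in Hind's construction underlying Theorem~\ref{thm:HK}, made quantitative by McDuff), and for $x=\tfrac{3b_n}{b_n+1}$ one feeds the four-dimensional ball-packing data attached to the ghost staircase into the same folding scheme; in each case one verifies directly that the constructed domain lies inside the target ellipsoid.

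All the lower bounds proceed through a neck-stretching argument for punctured pseudoholomorphic curves. Given a hypothetical embedding $\iota$ realizing some value $c$, one truncates the infinite axes at a large finite $L$ and compactifies, viewing the picture inside a closed symplectic manifold $X$ --- a blow-up of $\CP^n$, or of $\CP^1\times\CP^{n-1}$ --- or, equivalently, a suitable relative setting. One chooses a compatible almost complex structure $J$ that is cylindrical near $\iota(\partial E(1,x,\infty,\dots,\infty))$ and stretches the neck along that hypersurface. For an appropriate class $A\in H_2(X)$ and a generic point $p$, possibly decorated with a tangency condition to a local divisor at $p$, the signed count of $J$-holomorphic curves in class $A$ through $p$ is nonzero and finite. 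Gromov--SFT compactness then produces a holomorphic building: a bottom level in the completion of $\iota(E(1,x,\infty,\dots,\infty))$, cylindrical middle levels in the symplectization of $\partial E(1,x,\infty,\dots,\infty)$, and a top level in the completion of the complementary cobordism. Matching of Reeb asymptotics, additivity of the Fredholm index and of $\omega$-energy, and positivity of energy then combine into a numerical inequality between $x$ and $c$.

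For $1\le x\le\tau^4$ (following Cristofaro-Gardiner--Hind) one takes for $A$ the classes whose four-dimensional incarnations are the exceptional spheres responsible for the Fibonacci staircase; the crux is that these curves remain rigid and nonempty after the extra factors are added --- the $\C^{n-2}$ directions contribute trivially to the relevant index and can absorb the extra point constraint --- so that the sharp four-dimensional inequality $c\ge f(x)$ survives stabilization, yielding $g(x)\ge f(x)$ (first on the staircase points, then on all of $[1,\tau^4]$ by monotonicity and continuity of $g$). For the special values the building is instead organized around a single distinguished genus-zero curve, positively asymptotic to a short Reeb orbit of $\partial E(c,c,\infty,\dots,\infty)$ of action close to $c$ with controlled multiplicity and negatively asymptotic to orbits of $\partial E(1,x,\infty,\dots,\infty)$ whose total action is pinned down by the index formula; rearranging the resulting action balance gives $c\ge\tfrac{3x}{x+1}$. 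This is essentially the curve that in the limit $x\to\infty$ produces the bound $R\ge 3$ of Theorem~\ref{thm:HK}, now combined with the sharp index bookkeeping of Hind--Kerman, of McDuff, and of Cristofaro-Gardiner--Hind--McDuff at the values in question.

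The main obstacle is the obstruction side: showing that the required punctured curves exist and, above all, are rigid in the stabilized setting, and ruling out every spurious degeneration in the compactness argument. In four dimensions ECH and automatic transversality organize the relevant curve counts, but once the extra factors are present one must argue directly --- using the product structure, the parity of Conley--Zehnder indices of ellipsoid Reeb orbits, and the explicit low end of the action spectrum of $\partial E(1,x,\infty,\dots,\infty)$ --- that the limiting building is exactly the expected one and that no intermediate level absorbs area. By contrast the embedding constructions are routine once the folding scheme is in place, although pinning down the precise value $\tfrac{3x}{x+1}$ at the ghost-stair values $\tfrac{3b_n}{b_n+1}$ still requires some care.
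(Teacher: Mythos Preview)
The paper does not prove this theorem; it is quoted from the cited references, and the paragraph immediately following it offers only an informal summary of the strategy those papers share. That summary is: (i) locate a rigid genus-zero curve $C$ with one negative end in the four-dimensional cobordism $E(c,c)\setminus E(1,x)$, produced via ECH in \cite{CG-Hind_products}, via obstruction bundle gluing in \cite{Mcduff_remark}, and via neck-stretching closed curves in $\CP^2$ in \cite{Ghost_stairs_stabilize}; (ii) observe that a genus-zero curve with exactly one negative end has Fredholm index unchanged under stabilization, so $C\times\{pt\}$ persists; (iii) deform to conclude that a similar curve exists for any hypothetical higher-dimensional embedding; (iv) apply Stokes' theorem. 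Your outline is compatible with this in spirit but is organized differently: you frame the obstruction side as neck-stretching directly inside a higher-dimensional compactification and analyzing the limiting building, which is closest to the approach of \cite{Ghost_stairs_stabilize} but is not how \cite{CG-Hind_products} or \cite{Mcduff_remark} actually proceed. The paper's summary also isolates explicitly the key index observation (one negative end ensures the index is dimension-stable), which in your sketch is absorbed into the phrase ``the $\C^{n-2}$ directions contribute trivially to the relevant index.'' Since the paper contains no proof beyond this sketch, there is nothing further to compare against; your proposal is a reasonable high-level paraphrase, with the caveat you yourself flag, that the hard content --- existence, regularity, and stabilization of the relevant curves, and exclusion of unwanted degenerations --- lives entirely in the cited papers.
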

Although for $x > \tau^4$ the optimal embedding is not known, Hind \cite{hind2015some} has produced embeddings
\begin{align}\label{eq:hind_emb}
E(1,x) \times \C^{n-2} \hookrightarrow E(\tfrac{3x}{x+1},\tfrac{3x}{x+1}) \times \C^{n-2}  
\end{align}
which are conjectured in \cite{Mcduff_remark} to be optimal.
We mention that in all known cases the answer to Question \ref{question:stab_ell} is independent of $n$ provided $n \geq 3$.

\sss

In this paper we primarily focus on the problem of obstructing high dimensional symplectic embeddings.
The results comprising Theorem \ref{thm:stab_ell_known_results}
have relied on the following basic strategy.
First, given a four-dimensional symplectic embedding $E(1,x) \hookrightarrow E(c,c)$, one finds a rigid punctured pseudoholomorphic curve $C$ in the symplectic completion of the complementary cobordism $E(c,c) \setminus E(1,x)$ (see \S\ref{subsec:geometric_setup} for more on the geometric setup).
The existence of such a curve is proved using embedded contact homology in \cite{CG-Hind_products}, using obstruction bundle gluing in \cite{Mcduff_remark}, and by neck stretching closed curves in $\CP^2$ in \cite{Ghost_stairs_stabilize}.
One then argues that this curve meaningfully extends to the stabilization as a product curve $C \times \{pt\}$.
Assuming this, a deformation argument shows that a similar curve $\wt{C}$ must also exist in the complementary cobordism of any hypothetical symplectic embedding $E(1,x,\infty,...,\infty) \hookrightarrow E(c,c,\infty,...,\infty)$.
Finally, positivity of energy for $\wt{C}$ together with an application of Stokes' theorem gives a lower bound for $c$ in terms of $x$.

The process of stabilizing the curve $C$ cannot always be achieved.
Indeed, for an SFT-type curve $C$ with positive ends asymptotic to Reeb orbits $\gamma_1^+,..,.,\gamma_k^+$ and negative ends asymptotic to Reeb orbits $\gamma^-_1,...,\gamma^-_l$, the Fredholm index is given by
\begin{align}\label{eq:fred_ind_intro}
\ind(C) = (n-3)(2 - 2g - k - l) + \sum_{i=1}^k \cz(\gamma_i^+) - \sum_{j=1}^l\cz(\gamma_j^-) + 2c_1 \cdot [C]
\end{align}
(see \S\ref{subsec:punc_pseudo}).
Since stabilizing by $\C$ has the effect of increasing $n$ by $1$ and also increasing each Conley--Zehnder index $\cz(\gamma)$ by $1$,
the index after stabilizing is
\begin{align}\label{eq:stab_ind_intro}
\ind(C \times \{pt\}) =  2 - 2g - 2l + \ind(C). 
 \end{align}

This means that $C \times \{pt\}$ will typically not be rigid, and might potentially even have negative index, in which case it ought to disappear after a small perturbation.
However, a basic observation from \cite{Hind-Kerman_new_obstructions} is that for genus zero curves with exactly one negative end, the index is unchanged by stabilization, and in fact such curves do generally persist to give higher dimensional obstructions.
One of the main motivations of this paper is to understand whether these obstructions can be understood more systematically in terms of symplectic capacities which are defined in any dimension.

\subsection{Floer theory perspective}\label{subsec:capac_from_HF}

Before discussing our main results, we first motivate and illustrate the concept of ``higher symplectic capacities'' from the perspective of Floer theory, showing how to construct new capacities based on algebraic structures on symplectic cohomology.
The eager reader may skip ahead to the next subsection, where we turn our attention to rational symplectic field theory, which will be our main setting for the bulk of this paper.

Let $X^{2n}$ be a connected Liouville domain, i.e. a compact symplectic manifold with boundary, whose symplectic form $\omega$ admits a global primitive $\lam$ restricting to a positive contact form on $\bdy X$.
We associate to $X$ its symplectic cohomology $\sh(X)$, defined over say $\K = \Q$.
We can also consider the underlying cochain complex $\sc(X)$,
with its natural increasing action filtration
$$ \calF_{\leq a}\sc(X) \subset \calF_{\leq a'}\sc(X),\;\;\;\;\; 0 \leq a < a',$$
with $\bigcup\limits_{a \in \R_{\geq 0}}\calF_{\leq a}\sc(X) = \sc(X)$.
Roughly, as a $\K$-module $\sc(X)$ has one generator for each critical point of a chosen Morse function on $X$, and two generators  
$\hat{\gamma},\check{\gamma}$ for each Reeb orbit $\gamma$ of $\bdy X$,
with differential $\bdy$ counting Floer cylinders.
The Morse cochain complex of $X$ (defined over $\K$) sits inside $\sc(X)$ as the subcomplex of action zero elements: $$C(M) = \calF_{\leq 0}\sc(X) \subset \sc(X).$$
In particular, there is a canonical action zero element $e \in \calF_{\leq 0}\sc(X)$ represented by the minimum of the Morse function, which we will refer to as the unit (even when not discussing product structures).
Given any exact symplectic embedding $X \hookrightarrow X'$, we have a filtration-preserving chain map $\Phi: \sc(X') \rightarrow \sc(X)$ sending unit to unit.

We define a symplectic capacity from this data as follows:
$$ \capac(X) := \inf \{ a\;: \bdy(x) = e \text{ for some } x \in \calF_{\leq a}\sc(X)\}.$$
Moreover, given an exact symplectic embedding $X \hookrightarrow X'$, we have the monotonicity property
$\capac(X) \leq \capac(X')$. 
Indeed, this follows from a simple diagram chase, since
$x \in \calF_{\leq a}\sc(X')$ with $\bdy x = e$ implies $$\bdy(\Phi x) = \Phi(\bdy x) = \Phi(e) = e,$$
with $\Phi(x) \in \calF_{\leq a}\sc(X)$.

One can compute $\capac(B^{2n}(r)) = r$ and $\capac(B^2(R) \times \C^{n-1}) = R$,\footnote{Technically since $B^2(R) \times \C^n$ is not compact, this statement actually holds for a smoothing of $B^2(R) \times B^{2n-2}(S)$ with $S$ sufficiently large.}
and hence the capacity $\capac$ recovers Gromov's nonsqueezing theorem.
However, in order to capture more refined obstructions we need more capacities.
Recently, Gutt--Hutchings \cite{Gutt-Hutchings_capacities} showed that by replacing symplectic cohomology with its $S^1$-equivariant analogue, we can extend $\capac(X)$ to a sequence of capacities $\capac_1^{\op{GH}}(X),\capac_2^{\op{GH}}(X),\capac_3^{\op{GH}},...$ which conjecturally agree with the Ekeland--Hofer capacities.
To explain the construction, recall that $\sc(X)$ naturally has the structure of an $S^1$-complex, which means 
we have a sequence of operations $\Delta_i: \sc(X) \rightarrow \sc(X)$ of degree $1-2i$,\footnote{In the sequel we will discuss $\Z$ gradings assuming they are available, i.e. $2c_1(X) = 0$, or else interpret them in the appropriate quotient of $\Z$.
See \S\ref{subsec:rsft_formalism} for more on our grading conventions.
}
with $\Delta_0 = \bdy$, satisfying the relations $$\sum_{i + j = k} \Delta_i\circ \Delta_j = 0$$
for all $k \in \Z_{\geq 0}$ (see e.g. \cite{Seidel_biased_view, Bourgeois-Oancea_equivariant,ganatracyclic}).
Here $\Delta_1$ descends to the BV operator on $\sh(X)$, while $\Delta_{k \geq 2}$ is a ``higher BV operator'' compensating for the failure of $\Delta_1$ to satisfy $\Delta_1^2 = 0$ on the nose.
Let $\K[u^{-1}]$ be a shorthand for the $\K[u]$-module $\K[u,u^{-1}]/\left( u\K[u]\right)$.
We define $S^1$-equivariant symplectic cochains $\sc_{S^1}$ by $$\sc_{S^1}(X) := \sc(X) \otimes_{\K[u]} \K[u^{-1}],$$
with differential $\bdy_{S^1}$ given by
$$\bdy_{S^1}(x) = \sum_{i=0}^{\infty} \Delta_i(x)u^i$$
for $x \in \sc_{S^1}(X)$.\footnote{There are two common variations on this definition of the $S^1$-equivariant complex: the {\em negative} version, in which one works over $\K\ll u \rr$, and the {\em periodic} version, in which one works over $\K\ll u \rr[u^{-1}]$. The version we use above is akin to taking homotopy orbits of an $S^1$-action.}

There is a natural increasing action filtration on $\sc_{S^1}(X)$, 
and we have canonical elements 
$u^{-k}e \in \sc_{S^1}(X)$
 for $k \in \Z_{\geq 0}$.
We define $\capac_k^{\op{GH}}(X) \in \R_{> 0}$ by
$$ \capac_k^{\op{GH}}(X) := \inf \{ a\;:\; \bdy_{S^1}(x) = u^{-k+1}e\text{ for some } x \in \calF_{\leq a}\sc_{S^1}(X)\}.$$
Similar to before, exact symplectic embeddings $X \hookrightarrow X'$ induce transfer maps $\sc_{S^1}(X') \rightarrow \sc_{S^1}(X)$,
from which it follows that $\capac_k^{\op{GH}}(X) \leq \capac_k^{\op{GH}}(X')$.

To understand why the equivariant theory offers more quantitative data, we can rephrase the construction as follows.
By quotienting out the Morse complex of $X$, we obtain a homology-level long exact sequence
\begin{equation} \label{eq:SH_LES}
\begin{tikzcd}
\cdots & H(X) & \sh(X) & \sh_+(X) & H(X) & \cdots
\arrow[from=1-1,to=1-2]
\arrow[from=1-2,to=1-3]
\arrow[from=1-3,to=1-4]
\arrow[from=1-4,to=1-5,"\delta"]
\arrow[from=1-5,to=1-6]
\end{tikzcd}
\end{equation}
where the map $\delta$ is the connecting homomorphism.
We have the following equivalent homology level description: $$\capac(X) = \inf \{a \;:\; \delta(x) = [e]\text{ for some } x \in \calF_{\leq a}SH_+(X)\}.$$
In fact, we can define more general ``spectral capacities'' by replacing $[e] = 1 \in H^0(X)$ by any class in $H^*(X)$. However, if $X$ is say a star-shaped domain\footnote{By {\em star-shaped domain in $\C^n$} we mean any subdomain $M^{2n}\subset \C^n$ whose boundary is smooth and transverse to the radial vector field. The restriction of the standard Liouville form $\sum_{i=1}^n\tfrac{1}{2}(x_idy_i -y_idx_i)$ makes $M$ into a Liouville domain. Note that all star-shaped domains are diffeomorphic and in fact Liouville deformation equivalent, but typically not symplectomorphic.} in $\C^n$, we have $\sh_+(X) \cong H(X) \cong \K$, so there is essentially only one spectral invariant.
In constrast, in the $S^1$-equivariant version we replace $H(X)$ by $H_{S^1}(X) \cong H(X) \otimes H(\mathbb{CP}^{\infty})$, giving rise to countably many independent classes even when $X$ is contractible.

\sss

Let us now observe that symplectic cohomology has various additional algebraic structures which we can try to exploit to define further capacities.
Indeed, there is a Lie bracket on $S^1$-equivariant symplectic cohomology $\sh_{S^1}(X)$, which corresponds to the string bracket when $X$ is a cotangent bundle.
On the chain level this is expected to come from an underlying $\Li$ structure, i.e. we have a sequence of operations $\ell_{S^1}^1,\ell_{S^1}^2,\ell_{S^1}^3,\dots$, where
$\ell_{S^1}^k$ is a $k$-to-$1$ operation of degree $4-3k$ (see \S\ref{sec:filtered_L-inf_algebras} for more details on $\calL_\infty$ structures and our conventions).
Here $\ell_{S^1}^1$ coincides with the equivariant differential $\bdy_{S^1}$, and $\ell_{S^1}^2$ descends to the Lie bracket on homology, while $\ell_{S^1}^{k \geq 3}$ does not descend to homology.

It is useful to observe that the data of this $\Li$ algebra can be encoded in terms of the bar complex $\bar \sc_{S^1}(X)$, which is a single chain complex with bar differential $\wh{\ell}_{S^1}: \bar \sc_{S^1}(X) \rightarrow \bar \sc_{S^1}(X)$ (see Definition~\ref{def:bar}).
Here as a $\K$-module $\bar \sc_{S^1}(X)$ is the (reduced, graded) symmetric tensor algebra $\ovl{S}\sc_{S^1}(X)$ on $\sc_{S^1}(X)$.
In particular, we have a distinguished subcomplex with trivial differential, which is spanned by words in the elements $u^{-k}e$, $k \in \Z_{\geq 0}$, and which we identify with $\bar \K[u^{-1}] \cong \ovl{S}\K[u^{-1}]$.

We now cook up a family of symplectic capacities as follows.
Given an element $\bb \in \ovl{S}\K[u^{-1}]$, define $\dapac_{\bb}$ by 
$$ \dapac_{\bb}(X) := \inf\{ a\;:\; \bb = \wh{\ell}_{S^1}(x)  \text{ for some } x \in \calF_{\leq a}\bar\sc_{S^1}(X)\}.$$ As with $\capac_k^{\op{GH}}(X)$ above, it follows from the invariance properties of symplectic cochains that $\dapac_{\bb}(X)$ is independent of all choices involved in its construction and is monotone with respect to exact symplectic embeddings.

  Similar to \eqref{eq:SH_LES}, quotienting out the subcomplex $\bar C(X) \subset \bar \sc(X)$ gives a long exact sequence on homology 
  \begin{equation} \label{eq:bar_LES}
\begin{tikzcd}
\cdots & H(\bar C(X)) & H(\bar \sc_{S^1}(X)) & H(\bar_+ \sc_{S^1}(X)) & H(\bar C(X)) & \cdots
\arrow[from=1-1,to=1-2]
\arrow[from=1-2,to=1-3]
\arrow[from=1-3,to=1-4]
\arrow[from=1-4,to=1-5,"\delta"]
\arrow[from=1-5,to=1-6]
\end{tikzcd}
\end{equation} 
  If $\sh_{S^1}(X) = 0$ (or equivalently $\sh(X) = 0$), then the homology of $\bar \sc_{S^1}(X)$ also necessarily vanishes.
  This occurs e.g. if $X$ is a star-shaped domain in $\C^n$, in which case the connecting map in \eqref{eq:bar_LES}
  gives an isomorphism
  \begin{align}
    H(\bar_+ \sc_{S^1}(X)) \cong \ovl{S} \K[u^{-1}],
  \end{align}
and we see that $\dapac_{\bb}(X)$ is precisely the spectral invariant of the homology class mapping to $\bb \in \ovl{S} \K[u^{-1}]$.

\begin{remark}\label{rmk:intro_S1_gen_of_GH}
We also note that the bar complex $\bar\sc_{S^1}(X)$ has another increasing filtration by word length, i.e. $\bar_{\leq l}\sc_{S^1}(X) \subset \bar_{\leq l'}\sc_{S^1}(X)$
for $1 \leq l < l'$, 
and we can refine the construction and define capacities $\dapac_{\bb}^{\leq l}$ by
$$ \dapac_{\bb}^{\leq l}(X) := \inf\{a \;:\; \bb = \wh{\ell}_{S^1}(x)\text{ for some } x \in \calF_{\leq a}\bar_{\leq l}\sc_{S^1}(X)\}.$$
In the special case with $l=1$ and $\bb = u^{-k+1} \in \ovl{S} \K[u^{-1}]$ for some $k \in \Z_{\geq 1}$ (i.e. $\bb$ is a word of length one),
$\dapac_{u^{-k+1}}^{\leq 1}$ reduces to $\capac_k^{\op{GH}}$,
so the family of capacities $\{\dapac_\bb^{\leq l}\} $ naturally extends the sequence of Gutt--Hutchings capacities.
\end{remark}

\begin{remark}[other structures on symplectic cohomology]
  
In characteristic zero, symplectic cohomology $\sh$ is known to have the structure of a BV-algebra (see e.g. \cite{abouzaid2013symplectic}), i.e. the BV operator is compatible with a product and a bracket, both given by counting pairs of pants but with asymptotic markers treated differently.
On the chain level the product has an underlying $\Ai$ structure $\mu^1,\mu^2,\mu^3,\dots$, where $\mu^k$ has degree $2-k$, and the bracket has an underlying $\Li$ structure $\ell^1,\ell^2,\ell^3,\dots$, where $\ell^k$ has degree $3-2k$.
It is tempting to also use these non-equivariant structures to define symplectic capacities, but unfortunately
the subcomplex of $\bar \sc$ spanned by words in $e$ (i.e. the analogue of $\bar\K[u^{-1}]$) is acyclic in the case of the $\Ai$ structure (due to a nontrivial product), and it is just one-dimensional in the case of the $\Li$ structure (for grading reasons).

\end{remark}

\subsection{Main results}

Finally, we arrive at the main thrust of this paper.

\subsubsection{Stable symplectic capacities}

As before, let $X^{2n}$ be a connected Liouville domain.
In \S\ref{sec:RSFT} we discuss rational symplectic field theory as a filtered $\Li$ algebra whose underlying chain complex is the contact homology algebra $\cha(\bdy X)$, which is generated as a $\K$-algebra by (good) Reeb orbits in $\bdy X$.
This linearizes to a filtered $\Li$ structure whose underlying chain complex is linearized contact homology $\chlin(X)$,\footnote{In this paper by default all SFT structures are taken at chain level, and we will be explicit whenever passing to homology (similar conventions are used e.g. in \cite{bourgeois2012effect}).}
which is generated as a $\K$-module by (good) Reeb orbits in $\bdy X$.

In \S\ref{sec:construction_of_capacities} we construct a family of dimensionally stable symplectic capacities as follows.
Let $\K\langle \tt_1,\tt_2,\tt_3,\dots\rangle$ denote the graded $\K$-module spanned by formal variables $\tt_1,\tt_2,\tt_3,\dots$ with $|\tt_i| = -2-2i$, and let $\ovl{S}\K\langle \tt_1,\tt_2,\tt_3,\dots\rangle$ denote its (reduced) symmetric tensor algebra, i.e. the $\K$-module spanned by nonempty words in the generators $\tt_1,\tt_2,\tt_3,\dots$ (we can identify this with the polynomial algebra $\K[\tt_1,\tt_2,\tt_3,\dots]$ modulo the constants).
We view $\K\langle \tt_1,\tt_2,\tt_3,\dots\rangle$ as an $\Li$ algebra over $\K$ with all $\Li$ operations trivial, and we view $\ovl{S}\K\langle \tt_1,\tt_2,\tt_3,\dots\rangle$ as its bar complex (which also has trivial differential).

\begin{thm}\label{thm:g_b_intro}
 For each $\bb \in \ovl{S}\K\langle\tt_1,\tt_2,\tt_3,\dots\rangle$, there is a symplectic capacity $\gapac_\bb(X)$ defined for Liouville domains $X$ which satisfies the following properties:
\begin{itemize}
  \item (scaling) If $X'$ is given by scaling the symplectic form of $X$ by $c \in \R_{>0}$, then we have $\gapac_\bb(X') = c\gapac_\bb(X)$.
  \item (monotonicity) Given a symplectic embedding $X \hooksymp X'$ between Liouville domains of the same dimension we have $\gapac_\bb(X) \leq \gapac_\bb(X')$.
  \item (stabilization) We have $\gapac_\bb(X \times B^2(S)) = \gapac_\bb(X)$ whenever $S > \gapac_\bb(X)$, provided that $X$ satisfies the index positivity Assumption~\ref{assump:index_pos} (this holds e.g. if $X \subset \R^{2n}$ is a convex domain).\footnote{Strictly speaking $X \times B^2(S)$ is a Liouville domain with corners, but this holds for any sufficiently close smoothing of the corners. In fact, in \S\ref{subsubsec:behavior_under_stab} we formulate a more general setting in which stabilization holds.} 

\end{itemize}
\end{thm} 
\NI Note that it follows from monotonicity that $\gapac_\bb$ is a symplectomorphism invariant.
The stabilization property can be viewed as a quantitative refinement of the property $\gapac_k(X \times \C) = \gapac_k(X)$.

The definition of $\gapac_\bb$ is parallel to that of $\dapac_\bb$ discussed in \S\ref{subsec:capac_from_HF}, but with using the $\Li$ structure underlying linearized contact homology $\chlin(X)$ in place of $\sc_{S^1,+}(X)$, and replacing the connecting homomorphism $\delta$ in \eqref{eq:bar_LES} with a geometrically defined $\Li$ homomorphism $\auglin \lll \TT p \rrr: \chlin(X) \ra \K\langle \tt_1,\tt_2,\tt_3,\dots\rangle$ which counts punctured curves satisfying a local tangency constraint in the symplectic completion $\wh{X}$ of $X$ (local tangency constraints and more general geometric constraints are discussed in \S\ref{sec:geometric_constraints}).
The fact that $\gapac_\bb$ is monotone under general (i.e. not necessarily exact) symplectic embeddings is nonobvious and is based on the discussion of non-exact functoriality and deformations by Maurer--Cartan elements which appears in \S\ref{sec:CL}.

\begin{disclaimer}\label{disclaimer_from_intro}
The invariants defined in this paper are based on rational symplectic field theory, and we take as a black box its existence and expected properties as outlined e.g. in \cite{EGH2000} -- see \S\ref{sec:RSFT} for more details.
It is well-known that the general construction requires working with a virtual perturbation framework, for which several proposals have been put forward in varying degrees of completeness and generality. 
Apart from the general structural properties outlined in \S\ref{sec:RSFT} we will not otherwise discuss particularities of any virtual framework. See Remark~\ref{rmk:transversality} below for a more detailed discussion.
 \end{disclaimer}

The indexing set for the capacities $\{\gapac_\bb\}$ is rather large, but a specialization of this family yields a more user-friendly sequence of capacities $\gapac_1,\gapac_2,\gapac_3,\dots$ as follows.\footnote{These are also denoted by $\gapac\lll \T^{k-1}p\rrr$ in the body of the paper.}
\begin{thm}\label{thm:g_k_intro}
  For each $k \in \Z_{\geq 1}$ there is a symplectic capacity $\gapac_k(X)$ defined for Liouville domains $X$ which satisfies the scaling, monotonicity, and stabilization properties from Theorem~\ref{thm:g_b_intro}, as well as:
\begin{itemize}
  \item (upper bounds from closed curves) If $(M,\omega_M)$ is a closed symplectic manifold of the same dimension as $X$ and there is a symplectic embedding $X \hooksymp M$, then we have 
  $\gapac_k(X) \leq [\omega_M] \cdot A$ provided that $\gw_{M,A}\lll \T^{k-1}p\rrr \neq 0$ for some $A \in H_2(M)$.
  \item (Reeb orbits) If $\bdy X$ has nondegenerate Reeb dynamics, there is a collection of Reeb orbits $\ga_1,\dots,\ga_l$ in $\bdy X$ satisfying $\sum_{i=1}^l [\ga_i] = 0 \in H_1(X)$ such that $\gapac_k(X) = \sum_{i=1}^l \calA(\ga_i)$.
\end{itemize}
\end{thm}
\NI In the first bullet above, $\gw_{M,A}\lll \T^{k-1}p\rrr$ denotes the Gromov--Witten-type count of rational curves satisfying the local tangency constraint $\lll \T^{k-1}p\rrr$ (see \S\ref{subsec:local_tan_con}).
For the second bullet, if $X \subset \R^{2n}$ is a $2n$-dimensional star-shaped domain, or more generally if $c_1(X) = 0$, then we moreover have $\sum_{i=1}^l \cz(\ga_i) = l(n-3)+2k+2$, where $\cz$ denotes the Conley--Zehnder index with respect to any global symplectic trivialization of $TX$.

Intuitively, $\gapac(X)$ can be interpreted as the minimal energy\footnote{Note that the notion of energy used for SFT curves is a modification of the naive symplectic area, which is typically infinite --  
see \S\ref{subsec:geometric_setup}.}
of a rigid genus zero punctured pseudoholomorphic curve in $\wh{X}$ which passes through a specified point $p$ and is tangent to order $k-1$ to a specified smooth local divisor $D$ near $p$.
Of course, this interpretation comes with various caveats
\begin{itemize}
\item the asymptotic orbits must form a cycle with respect to the differential on the bar complex (this is automatic for an irrational ellipsoid for degree parity reasons)
\item the cycle may in fact be a $\K$-linear combination of (good) Reeb orbits
\item curves are counted algebraically rather than geometrically, and in general these counts could be virtual.
\end{itemize}

\sss

Analogous to Remark~\ref{rmk:intro_S1_gen_of_GH}, using the word length filtration on bar complexes (i.e. restricting the number of positive ends our curves can have), we can define extended exact symplectic capacities $\gapac_\bb^{\leq l}$ for $l \in \Z_{\geq 1}$ and $\bb \in \K[\tt_1,\tt_2,\tt_3,\dots]$, and similarly we have $\gapac_k^{\leq l}$ for $k,l \in \Z_{\geq 1}$.
The special case $l = 1$ corresponds to curves with one positive end, and at least for ``nice'' domains $X$ we expect 
\begin{align}\label{eq:gapac_equals_GH}
\gapac_k^{\leq 1}(X) = c_k^{\op{GH}}(X),
\end{align}
i.e. the family $\{\gapac_k^{\leq l}\}$ naturally extends the sequence of Gutt--Hutchings capacities.
\begin{addendum}
Indeed, \eqref{eq:gapac_equals_GH} is established in \cite{pereira2022equivariant} for any Liouville domain $X^{2n}$ satisfying $\pi_1(X) = 2c_1(X) = 0$.
\end{addendum}

\begin{remark}In the body of the paper we work over the Novikov ring in lieu of using action filtrations.
This allows us to define capacities for larger class of domains which are not necessarily exact, and it also allows us to discuss Maurer--Cartan theory (see \S\ref{sec:CL}).
\end{remark}
\begin{remark}
In contrast to the capacities $\capac_k^{\op{GH}}$, the capacities $\gapac_k$ and more generally $\gapac_{\bb}$ are actually monotone even for non-exact symplectic embeddings.
This follows from the Cieliebak--Latschev formalism (see \S\ref{sec:CL}), which describes cobordism maps for non-exact symplectic embeddings.
We note, however, that the word length refinements $\gapac_k^{\leq l}$ and $\gapac_{\bb}^{\leq l}$ are only monotone for exact symplectic embeddings.
\end{remark}

\subsubsection{First computations and applications}

In \S\ref{subsec:first_computations}, we give the following first computations:
\begin{thm}\label{thm:intro_computations}
We have:
\begin{enumerate}
  \item  $\gapac_k(B^4) = \lceil \tfrac{k+1}{3}\rceil$ for $k \geq 1 \text{ congruent to } 2 \text{ mod } 3$
  \item  $\gapac_k(E(1,x)) = k$ for $1 \leq k \leq x$
  \item $\gapac_k(P(1,x)) = \min(k,x+\lceil \tfrac{k-1}{2}\rceil)$ for $k \geq 1\text{ odd}$.
\end{enumerate}
 \end{thm} 
\NI Here $P(a_1,..,a_n)$ denotes the polydisk $B^2(a_1) \times ... \times B^2(a_n)$.
Items (1),(2),(3) correspond to Propositions \ref{prop:gapac_for_ball},\ref{prop:ellipsoid_computation},\ref{prop:gapac_poly_comp} respectively.

Comparing the obstructions from the first two items in Theorem~\ref{thm:intro_computations} with the embedding \eqref{eq:hind_emb}, it follows that the capacities $\{\gapac_k\}$ give optimal obstructions for the problem 
$$E(1,3d-1) \times \C^{n-2} \hookrightarrow E(c,c) \times \C^{n-2}$$
for any $d \geq 1$, $n \geq 3$, 
recovering a recent result of McDuff \cite{Mcduff_remark} (c.f. Theorem \ref{thm:stab_ell_known_results}).
Similarly, they give optimal obstructions for the problem
$$E(1,2d-1) \times \C^{n-2} \hookrightarrow P(c,c) \times \C^{n-2}$$
for any $d \geq 1$, $n \geq 3$.

The computations in Theorem~\ref{thm:intro_computations} also imply various new obstructions for symplectic embeddings, which we illustrate with some examples.
\begin{example}
Consider the symplectic embedding problem $$P(1,2) \times \C^{n-2} \hooksymp P(c,c)\times \C^{n-2}.$$
The capacities $\{\gapac_k\}$ give the lower bound $c \geq \tfrac{3}{2}$ (for convenience see Table~\ref{caps_for_P_1_2_and_ball}). We note that the Ekeland--Hofer capacities $\capac_k^{\op{EH}}$ give only $c \geq 1$, and the ECH capacities are not applicable here.
\end{example}
\begin{table}
\begin{equation*}
 \begin{array}{|c|c|c|c|c|c|}
 \hline
k & 1  & 3 & 5  & 7 & 9\\ \hline 
\gapac_k(P(1,2)) & 1  & 3  & 4  & 5  & 6 \\  \hline
\gapac_k(P(c,c)) & c  & 2c  & 3c  & 4c  & 5c \\  \hline
c_k^\op{EH}(P(1,2)) & 1  & 3  & 5  & 7  & 9\\  \hline
c_k^\op{EH}(P(c,c)) & c  & 3c  & 5c  & 7c  & 9c\\  \hline
 \end{array}.
\end{equation*}
\caption{Sample computations using Theorem~\ref{thm:intro_computations}.}
\label{caps_for_P_1_2_and_ball}
\end{table}
\begin{addendum}
The recent preprint \cite{irvine} gives a nearly complete description of the stabilized four-dimensional polydisk into polydisk problem; see also \cite[\S4.4]{chscI}.
\end{addendum}
\begin{example}
Consider the symplectic embedding problem $$P(1,3) \times \C^{n-2} \hookrightarrow B^4(c) \times \C^{n-2}.$$
The capacities $\{\gapac_k\}$ give the lower bound $c \geq \tfrac{5}{2}$, while 
the Ekeland--Hofer capacities give only $c \geq 2$ 
(for convenience see Table~\ref{caps_for_P_1_3_and_ball}).
Note that, according to Schlenk \cite[Prop 4.3.9]{Schlenk_embedding_problems}, symplectic folding to produces a symplectic embedding of $P(1,a)$ into $B^4(c)$ for any $a > 2$ and $c > 2 + \tfrac{a}{2}$.

Actually, using the stabilization part of Theorem~\ref{thm:g_k_intro}, we find e.g. that for any $S \geq 5$ there is no symplectic embedding $P(1,3) \times B^2(S) \hooksymp B^4(c) \times \C^{n-2}$ unless $c \geq 5/2$.
\end{example}
\begin{table}
\begin{equation*}
 \begin{array}{|c|c|c|c|c|c|}
 \hline
k & 1  & 3  & 5  & 7  & 9\\ \hline 
\gapac_k(P(1,3)) & 1  & 3  & 5  & 6  & 7 \\  \hline
\gapac_k(B^4(c)) & c  & 2c  & 2c  & 3c  & 4c \\  \hline
c_k^\op{EH}(P(1,3)) & 1  & 3  & 5  & 7 & 9\\  \hline
c_k^\op{EH}(B^4(c)) & c  & 2c  & 3c  & 4c  & 5c\\  \hline
 \end{array}.
\end{equation*}
\caption{More sample computations using Theorem~\ref{thm:intro_computations}.}
\label{caps_for_P_1_3_and_ball}
\end{table}

\begin{example}
In the followup work \cite{ell_into_poly}, we use the capacities $\{\gapac_k\}$ and the computations in Theorem~\ref{thm:intro_computations} to give optimal obstructions for certain stabilized embeddings of four-dimensional ellipsoids into polydisks, in particular proving Conjecture 1.4 from \cite{cristofaro2017symplectic}.
\end{example}

\sss

For the problem 
$$ E(1,x) \times \C^{n-2} \hooksymp B^4(c) \times \C^{n-2},$$
it turns out that the simplified capacities $\{\gapac_k\}$ do {\em not} always give optimal obstructions. 
Indeed, we give a general formula for $\gapac_k(E(a,b))$ (in fact for all four-dimensional convex toric domains) in the follow up \cite{enumerating2}, and we observe that these capacities do not always recover the known optimal results in Theorem~\ref{thm:stab_ell_known_results}. 
In hindsight this is not too surprising, since the simplification from $\{\gapac_\bb\}$ to $\{\gapac_k\}$ involves a rather coarse minimization procedure. 

On the other hand, for abstract reasons which we explain at the end of \S\ref{subsec:first_computations}, the set of capacities $\gapac_\bb$ necessarily give optimal obstructions for all known cases covered in Theorem \ref{thm:stab_ell_known_results}, suggesting:
\begin{conjecture}\label{conj:complete_obstructions}
The capacities $\{\gapac_\bb\}$ provide a complete set of obstructions for Question \ref{question:stab_ell}.
\end{conjecture}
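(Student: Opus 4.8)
The \emph{only if} direction is precisely the monotonicity of $\gapac_\bb$ under (not necessarily exact) symplectic embeddings established above, so the entire content is the converse: if $\gapac_\bb(E(a,b)\times\C^{n-2}) \le \gapac_\bb(E(c,d)\times\C^{n-2})$ for every $\bb \in \ovl{S}\K[t]$, then the stabilized embedding $E(a,b)\times\C^{n-2}\hookrightarrow E(c,d)\times\C^{n-2}$ exists. The plan is to first invoke the stabilization property $\gapac_\bb(X\times\C) = \gapac_\bb(X)$ to reduce the hypothesis to a statement purely about the four-dimensional capacities $\gapac_\bb(E(a,b))$ and $\gapac_\bb(E(c,d))$, and then to match this numerical data against the (conjecturally known) answer to Question~\ref{question:stab_ell}.

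The first main step would be an exact enumerative computation of $\gapac_\bb(E(a,b))$. Via neck-stretching and the SFT-to-Gromov--Witten correspondence this should reduce to counting rational curves in iterated blow-ups of $\CP^2$ through generic points with a prescribed tangency order to a local divisor at one additional point, i.e.\ the relative Gromov--Witten invariants computed in \cite{enumerating2}. One expects these counts to be governed by the same lattice-point and Cremona-move combinatorics (the weight expansion of $E(a,b)$, ECH-type partition data) that controls the four-dimensional ellipsoid problem, together with a genuinely new layer of data recording how the tangency order is distributed among the ends; this extra data is exactly what survives stabilization, whereas many of the classical ECH obstructions do not.

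The second main step is completeness. For $1\le x\le\tau^4$ and the further ranges of Theorem~\ref{thm:stab_ell_known_results} the optimal embeddings are four-dimensional and stabilize, so one would only need to check that the enumerative formula for $\gapac_\bb(E(1,x))$ reproduces $f(x)$ — the ``abstract'' argument referred to in \S\ref{subsec:first_computations} — and then extend it, via a scaling reduction $E(a,b)=\lambda E(1,x)$ together with the explicit value of $\gapac_\bb$ on ellipsoids and monotonicity, to targets $E(c,d)$ with $c\ne d$. For the remaining ranges one would feed in Hind's embeddings $E(1,x)\times\C^{n-2}\hookrightarrow E(\tfrac{3x}{x+1},\tfrac{3x}{x+1})\times\C^{n-2}$ (and their conjectural analogues for non-square targets) as the upper bound, and show from the curve-count formula that for any strictly smaller target some $\gapac_\bb$ is larger on the domain than on the target. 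This last point is where the new capacities must do work that the Gutt--Hutchings and ECH capacities provably cannot.

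The hard part — and the reason this is a conjecture rather than a theorem — is that there is no known high-dimensional analogue of the McDuff--Taubes machinery underlying completeness in dimension four, so the ``if'' direction cannot at present be proved by building embeddings out of the obstructing curves. In effect the program above reduces Conjecture~\ref{conj:complete_obstructions} to two inputs: (i) the conjecture of \cite{Mcduff_remark} that Hind's stabilized embeddings (suitably extended to non-square targets) are optimal; and (ii) a family of enumerative identities relating $\gapac_\bb(E(a,b))$ to the answer to Question~\ref{question:stab_ell}. Step (ii) is, for each fixed $\bb$, an intricate but in principle finite computation, so the genuinely open ingredient is (i); conversely, any independent proof of (i) would, through this reduction, upgrade the conjecture to a theorem.
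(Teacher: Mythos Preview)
The statement is a conjecture, and the paper offers no proof; there is nothing to compare a proof against. What the paper does provide, at the end of \S\ref{subsec:first_computations}, is heuristic evidence that the $\gapac_\bb$ recover all the obstructions of Theorem~\ref{thm:stab_ell_known_results}, and your proposal correctly frames the remaining content as a program contingent on McDuff's conjecture about the optimality of Hind's stabilized embeddings.

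The paper's route to the known cases differs from your step (ii), however. You propose to compute $\gapac_\bb(E(1,x))$ explicitly via enumerative geometry and then match the output against $f(x)$. The paper instead argues abstractly: for irrational ellipsoids the bar differential vanishes for parity reasons and $\wh{\auglin}\lll\TT p\rrr$ is an isomorphism onto $\ovl{S}\K[t]$, so $\gapac_\bb$ is simply the action of the unique preimage of $\bb$. Given any somewhere injective index zero rational curve in the complementary cobordism with one negative end --- precisely the kind of curve underlying each case of Theorem~\ref{thm:stab_ell_known_results} --- one takes $\bb$ to be the image under $\wh{\auglin}\lll\TT p\rrr$ of the word in $\bar\chlin$ formed by its positive ends; the existence of the curve then forces $\gapac_\bb$ of the domain to be at least the action of the negative end, while $\gapac_\bb$ of the target equals the total action of the positive ends. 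This recovers the curve's Stokes-theorem obstruction without any explicit curve count, and in particular bypasses Problem~\ref{problem:compute}. Your approach would in principle yield more --- an explicit description of which $\bb$ matter --- but at the cost of the full solution to Problem~\ref{problem:compute} (supplied, per Addendum~\ref{addendum:chscI_comps}, in \cite{chscI}). Either way, as you correctly isolate, the genuinely open ingredient is the existence side, i.e.\ the optimality of Hind's embeddings beyond the ranges already covered.
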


\begin{addendum}\label{addendum:chscI_comps}
In the followup work \cite{chscI} we give an explicit algorithmic computation of $\gapac_\bb(X)$ for $X$ a convex toric domain. 
In particular, this gives many new obstructions for stabilized ellipsoid embeddings.
However, since the combinatorics are rather involved, the full case of Conjecture~\ref{conj:complete_obstructions} is at present still open.
\end{addendum}

  \begin{remark}
The indexing set $\ovl{S}\K[t]$ for the above capacities is quite large, and is also somewhat redundant, since at the very least the capacities only depend on $\bb \in \ovl{S}\K[t]$ up to multiplication by an invertible element of $\K$. 
Note also that we are not using in any way the algebra structure of $\K[t]$.
One can use persistent homology theory to find, for two given Liouville domains $X,X'$, a countable set of $\{\gapac_\bb\}$ which carries all of the obstructions of the full family -- see \cite[\S4.2]{chscI}.
\end{remark}

\subsubsection{Further capacities from rational symplectic field theory}

The local tangency constraints appearing in the definition of $\gapac_\bb$ are key to the dimensional stability property, but we can also consider various other geometric constraints and define corresponding (typically unstable) capacities. 
Indeed, the family $\{\gapac_\bb\}$ is part of larger family given by replacing local tangency constraints with more general multibranched local tangency constraints
 of the form $\lll \T^{m_1}p,\dots, \T^{m_b}p\rrr$. Roughly, this requires curves to pass through the specified point $p$ in $b$ branches, with the $i$th branch tangent to order $m_i$ to the local divisor $D$ (see \S\ref{subsec:multibr_loc_tan}).
We denote this family of capacities by $\{\rapac_\bb\}$, where now $\bb \in \ovl{S}\K[\tt_1,\tt_2,\tt_3,\dots]$.
We note that in lieu of the stabilization property we have a disjoint union property (see \S\ref{subsubsec:disjoint_union_lb}), the ECH analogue of which has many important ramifications (see e.g. \cite{cristofaro2015asymptotics,cristofaro2016one,iriedense}).

 Whereas the capacities $\gapac_\bb$ are defined using the $\Li$ structure on linearized contact homology $\chlin(X)$, the capacities $\{\rapac_\bb\}$ require the $\Li$ on the full contact homology algebra $\cha(X)$. 
This algebraic structure, which we describe in \S\ref{subsubsec:Li_str_on_CHA}, gives a perspective on rational symplectic field theory which is particularly useful for defining symplectic capacities.\footnote{The recent preprint \cite{latschev2022remarks} explains in detail the relationship between our constructions and the original RSFT formalism of \cite{EGH2000}.}

At first glance still more generally, we can also impose multibranched local tangency constraints at several distinct points. 
Namely, we specify distinct points $p_1,\dots,p_r \in \wh{X}$ and corresponding smooth local divisors at those points, and consider constraints of the form
$\lll \underbrace{\TT p_1,\dots \TT p_1}_{b_1},\dots,\underbrace{\TT p_r,\dots,\TT p_r}\rrr$ for $b_1,\dots,b_r \in \Z_{\geq 1}$.
Notably, in the special case without any tangency conditions this specializes to a family of ``blowup capacities'', denoted by $\rapac\lll p_1^{\times b_1},\dots,p_r^{\times b_r}\rrr$, which can be also be interpreted in terms of blowing up $X$ at $r$ distinct points and then restricting to certain homology classes of curves.
It follows from known results and basic properties of these capacities that the full family of blowup capacities gives (together with the volume constraint) complete obstructions to the four dimensional ellipsoid embedding problem (as well as the more general embedding problems considered in \cite{cristofaro2019symplectic}).
The further special case $\rapac_k := \rapac\lll p_1,\dots,p_r\rrr$, which roughly corresponds to the minimum energy of a (not necessarily connected) rational curve passing through $r$ distinct points, was first sketched by Hutchings in \cite{hutchings_2013} (this was an important inspiration for this paper).

As we explain in \S\ref{subsec:ppt}, in dimension four the above capacities involving constraints at multiple points can actually be reduced to the capacities $\rapac_\bb$. Indeed, we present an explicit formula describing how to combine constraints at different $p_1,\dots,p_r$ into constraints at a single point $p$ (a special case involving two points appears in Gathmann's work \cite{Gathmann_GW_of_blowups}).
This can be used to give recursive formulas for curve counts with tangency constraints in terms of curve counts without tangency constraints, e.g. in \cite{enumerating} we apply this technique to count closed curves in $\CP^2$ with multibranched tangency constraints.
In particular, the capacities $\{\rapac_\bb\}$ recover the obstructions coming from both the blowup capacities $\rapac\lll p_1^{\times b_1},\dots,p_r^{\times b_r}\rrr$ and the stable capacities $\{\gapac_\bb\}$, and we expect the full family to play an important role in the general case of Question~\ref{question:ell_emb}.

\begin{remark}
It is also possible to define higher genus analogues of $\rapac_\bb$ using the full symplectic field theory \cite{EGH2000}, although we do not pursue this here. It is an interesting question whether higher genus curves play any role in higher dimensional symplectic embedding obstructions.
\end{remark}

\subsection{Outline}

In \S\ref{sec:filtered_L-inf_algebras} we review the necessary background on filtered $\Li$ algebras, which forms the algebraic backbone of the paper. In \S\ref{sec:RSFT} we discuss rational symplectic field theory, formulated using the language of filtered $\Li$ algebras. 
In \S\ref{sec:CL} we describe the Cieliebak--Latschev formalism in this framework, which among other things is useful for studying the role of compactifications.
In \S\ref{sec:geometric_constraints} we construct the $\Li$ augmentations, which are interpreted as counting curves with geometric constraints and defined formally using RSFT cobordism maps.
Finally, in \S\ref{sec:construction_of_capacities} we give the general construction of the capacities, prove some key properties, and present the aforementioned applications.

\section*{Acknowledgements}
{
I wish to wish Dusa McDuff and Dan Cristofaro-Gardiner for many interesting discussions and for explaining their work on the stabilized ellipsoid embedding problem. I also greatly benefited from discussions with Michael Hutchings, Chiu-Chu Melissa Liu, John Pardon, and Jingyu Zhao. Lastly, I thank the referee for many helpful suggestions which improved the exposition.
}

\section{Filtered $\Li$ algebras}\label{sec:filtered_L-inf_algebras}

In this section we go over the necessary background on filtered $\Li$ algebras, meanwhile setting the notation and conventions for the rest of the paper. Roughly speaking, an $\Li$ algebra is like a graded Lie algebra which is equipped with a differential, and where the graded Jacobi identity holds only up to a coherent sequence of higher homotopies. Thus we have a graded module $V$ and multilinear $k$-to-$1$  operations $\ell^k$ for $k = 1,2,3,...$, such that $\ell^1$ is a differential, $\ell^2$ descends to a Lie bracket on the $\ell^1$-cohomology, $\ell^3$ gives a chain nullhomotopy of the Jacobi relation for $\ell^2$, and so on.
The homology inherits the structure of a graded Lie algebra,
and the special case where $\ell^k \equiv 0$ for all $k \geq 3$ is equivalent to the notion of a differential graded Lie algebra (DGLA). Thus $\Li$ algebras and DGLAs are Lie analogues of the slightly more familiar notions of $\Ai$ algebras and differential graded algebras (DGAs).

As it turns out, DGLAs and $\Li$ naturally occur all over geometry and topology,
typically modeling ``chain level objects'' for which the underlying homology is a graded Lie algebra. Even in cases when the natural model is a DGLA (and in fact one can always abstractly replace an $\Li$ algebra by an equivalent DGLA), one is more or less forced to face $\Li$ notions when considering the homotopy category. For example, a key property of $\Li$ homomorphisms is that quasi-isomorphisms are automatically homotopy equivalences. This statement fails for the naive homotopy category of DGLAs, for which the naive notion of homomorphism turns out to be too ``strict''. 

In symplectic geometry, $\Li$ algebras arise naturally when considering rational pseudoholomorphic curves with multiple positive (i.e. input) punctures and one negative (output) puncture.
For the quantitative applications we have in mind, it will be essential for us to consider $\Li$ algebras which are additionally equipped with filtrations. 
In terms of geometry these filtrations will be 
related to the symplectic action functional.
One way to encode this extra data is by working over the Novikov ring and recording energies in pseudoholomorphic curve counts.
The resulting category of filtered $\Li$ algebras is very rich and is central to our constructions of quantitative symplectic invariants.

In what follows, we adopt some of the notation and conventions for $\Li$ algebras from \cite{Fukaya-deformation,Getzler_lie,Yalin_MC} and other sources.
Although the basic definitions are essentially standard, there is some small room for discrepancy arising from sign conventions, grading conventions, etc.

\subsection{$\Li$ algebra basics}
\subsubsection{Review of $\Ai$ algebras}

To set the stage for $\Li$ notions, we first discuss the slightly more familiar case of $\Ai$ algebras.
Let $\K$ be a fixed commutative ring containing $\Q$ (e.g. $\Q$ itself).
Let $V$ be a graded module over $\K$.
 In general we will only assume $\Z/2$ gradings, although the following discussion applies equally if $V$ has a $\Z$ grading (which occurs in favorable cases).
Let $s\K$ denote the graded $\K$-module consisting of a single copy of $\K$ in degree $-1$,
and let $sV := s\K \otimes V$ be the ``suspension'' of $V$. Thus $(sV)^k$ is identified with $V^{k+1}$, and there is a natural degree $-1$ map $s: V \rightarrow sV$ sending $v$ to $s(v) := 1 \otimes v$, where $1 \in \K$ is the unit. 
For $x \in V^k$, we put $|x| = k$ and $|sx| = k-1$.
In what follows, the ubiquitous use of suspensions gives a convenient way of describing the gradings and signs in $\Li$ equations; they can be ignored on a first pass if one is not too concerned with these.

A succinct way to define an $\Ai$ algebra structure on $V$ is as a degree $+1$ coderivation $\wh{\mu}$ on the reduced tensor coalgebra $\ovl{T}sV$ which satisfies $\wh{\mu}^2 = 0$. In more detail, let $\otimes^kV$ denote the tensor product (over $\K$) of $k$ copies of $V$, and set $\ovl{T}V = \bigoplus\limits_{k \geq 1}\otimes^kV$.  
Although $\ovl{T}V$ is an algebra, at present it is more relevant that is a coalgebra. Here the line over $T$ refers to the fact that we start at $k=1$ rather than $k=0$,
so $\ovl{T}V$ is the {\em reduced} tensor coalgebra on $V$. 
In general, a graded coassociative coalgebra is the dual notion of a graded associative algebra, consisting of a graded $\K$-module $C$ and a degree $0$ comultiplication map $\Delta: C \rightarrow C \otimes C$
satisfying the coassociativity condition $(1 \otimes \Delta) \circ \Delta = (\Delta \otimes 1) \circ \Delta$.
In the case of $\ovl{T}V$, the comultiplication $\Delta$ is given by $$\Delta(v_1 \otimes ... \otimes v_k) = \sum_{i=1}^{k-1}(v_1 \otimes ... \otimes v_i) \otimes (v_{i+1} \otimes ... \otimes v_k),$$
which is indeed coassociative.

Given a coassociative coalgebra $(C,\Delta)$, a coderivation is a map $\delta: C \rightarrow C$ satisfying the coLeibniz condition $\Delta \circ \delta = (1 \otimes \delta) \circ \Delta + (\delta \otimes 1)\circ \Delta$.
Here and throughout we implement the Koszul--Quillen sign convention that $f \otimes g$ pairs with $x \otimes y$ to give $(-1)^{|x| |g|}f(x) \otimes g(y)$, for $f,g: C \rightarrow C$ and $x,y \in C$.
For example, for $\delta$ of degree $1$ we have $(1 \otimes \delta)(x\otimes y) = (-1)^{|x|}x \otimes \delta(y)$.
\begin{definition}
An {\em $\Ai$ algebra} is a graded $\K$-module $V$ and a degree $+1$ 
coderivation\footnote{The degree of $+1$ for $\wh{\mu}$ reflects the fact that we choose cohomological conventions throughout. Some references instead adapt homological conventions, in which case $\wh{\mu}$ has degree $-1$.}
 $\wh{\mu}: \ovl{T}(sV) \rightarrow \ovl{T}(sV)$ satisfying $\wh{\mu}^2 = 0$.
\end{definition}
A basic fact is that any coderivation $\wh{\mu}: \ovl{T}V \rightarrow \ovl{T}V$ is uniquely determined by the sequence of maps $\mu^k: \otimes^kV \rightarrow V$ given by the compositions
$$\otimes^kV \hookrightarrow \ovl{T}V \overset{\wh{\mu}}\longrightarrow \ovl{T}V \rightarrow V,$$
i.e. inclusion followed by $\wh{\mu}$ followed by projection.
Indeed, any sequence of $\K$-linear maps $\mu^k: \otimes^kV \rightarrow V$ uniquely extends to a coderivation $\wh{\mu}: \ovl{T}V \rightarrow \ovl{T}V$ such the induced maps $\otimes^k V \rightarrow V$ recover the $\mu^k$. Explicitly, the extension is given by 
\begin{align}
\wh{\mu}(v_1 \otimes ... \otimes v_n) &:= \sum_{\substack{1 \leq i \leq n-k+1\\ 1 \leq k \leq n}}\left( 1^{\otimes (i-1)}\otimes\mu^k \otimes 1^{\otimes(n-i-k+1)} \right)(v_1 \otimes ... \otimes v_n). \label{mu_extension}
\end{align}
We can therefore equivalently define an $\Ai$ algebra structure as a sequence of degree $1$ maps $\mu^k: \otimes^k(sV) \rightarrow sV$ satisfying the quadratic relations
\begin{align*}\label{Ainf_relations}
\sum_{\substack{1 \leq i \leq n-k+1\\ 1 \leq k \leq n}} (-1)^{|sv_1| + ... + |sv_{i-1}|}\mu^{n-k+1}\left(sv_1 \otimes ... \otimes sv_{i-1} \otimes \mu^k(sv_i \otimes ... \otimes sv_{i+k-1}) \otimes sv_{i+k} \otimes ... \otimes sv_n\right) = 0.
\end{align*}
Or, we could work instead with the desuspended operations $s^{-1} \circ \mu^k \circ s^{\otimes k}: \otimes^k V \rightarrow V$, which have degree $2-k$ and satisfy the same quadratic relations except for some additional Koszul signs.
In any case, we will often refer to an $\Ai$ algebra by its underlying $\K$-module $V$ when the rest of the structure is implicit.

\begin{remark}
Recall that a DGA is an associative $\K$-algebra equipped with a degree $+1$ differential $\bdy$ and a degree $0$ associative product satisfying the graded Leibniz rule $\bdy(a\cdot b) = \bdy(a)\cdot b + (-1)^{|a|}a\cdot \bdy(b)$. 
In the special case of an $\Ai$ algebra with $\mu^k = 0$ for all $k \geq 3$, we recover the notion of a DGA on $V$ by setting 
$x\cdot y := s^{-1}\mu^2\circ (s\otimes s)(x \otimes y) = (-1)^{|x|}s^{-1}\mu^2(sx,sy)$.
\end{remark}

\subsubsection{Symmetric tensor coalgebras and $\Li$ algebras}

Let $\Sigma_k$ denote the symmetric group on $k$ elements. 
For $V$ a graded $\K$-module, we consider the signed action of $\Sigma_k$ on $\otimes^kV$, given by
$$ \sigma(v_1 \otimes ... \otimes v_k) = \sign(V,\sigma;v_1,...,v_n)v_{\sigma(1)}\otimes ... \otimes v_{\sigma(k)},$$
where $\sign(V,\sigma;v_1,...,v_n) \in \{1,-1\}$ is the associated Koszul sign
$$\sign(V,\sigma;v_1,...,v_n) = (-1)^{\{|v_i| |v_j|\;:\; 1 \leq i < j \leq n,\; \sigma(i) > \sigma(j)\}}$$
(i.e. swapping $v_i$ and $v_{i+1}$ ``costs'' $(-1)^{|v_i| |v_{i+1}|}$).
Let $\odot^kV := (\otimes^kV)/\Sigma_k$ denote the quotient space.
Then $\ovl{S}V := \bigoplus\limits_{k \geq 1}\odot^k V$ is the reduced symmetric tensor coalgebra on $V$.
For $v_1 \otimes ... \otimes v_k \in \otimes^kV$, we denote its image in $\odot^kV$ by $v_1 \odot ... \odot v_k$.
Let $\Sh(i,k-i)$ denote the subset of permutations $\sigma \in \Sigma_k$ satisfying 
$\sigma(1) < ... < \sigma(i)$ and $\sigma(i+1) < ... < \sigma(k)$.
The comultiplication $\Delta: \ovl{S}V \rightarrow \ovl{S}V \otimes \ovl{S}V$ is given by 
\begin{align*}
\Delta(v_1 \odot ... \odot v_k) := \sum_{i=1}^{k-1}\sum_{\sigma \in \Sh(i,k-i)}\sign(\sigma,V;v_1,...,v_k)(v_{\sigma(1)} \odot ... \odot v_{\sigma(i)}) \otimes (v_{\sigma(i+1)} \odot ... \odot v_{\sigma(k)}).
\end{align*}
We note that this comultiplication is
 cocommutative in the sense that $\mathfrak{R} \circ \Delta = \Delta$, where $\mathfrak{R}: \ovl{S}V \otimes \ovl{S}V \rightarrow \ovl{S}V \otimes \ovl{S}V$ is given by $\mathfrak{R}(x \otimes y) = (-1)^{|x| |y|} y \otimes x$.

\begin{definition}
An {\em $\Li$ algebra} is a graded $\K$-module $V$ and a degree $+1$ coderivation $\wh{\ell}: \ovl{S}(sV) \rightarrow \ovl{S}(sV)$ satisfying $\wh{\ell}^2 = 0$.
\end{definition}

As in the $\Ai$ case, any coderivation $\wh{\ell}: \ovl{S}V \rightarrow \ovl{S}V$ is uniquely determined by the maps $\ell^k: \odot^k V \rightarrow V $ given by the composition $\odot^kV \hookrightarrow \ovl{S}V \overset{\wh{\ell}}\longrightarrow \ovl{S}V \rightarrow V$ of inclusion followed by $\wh{\ell}$ followed by projection.
Indeed, any sequence of $\K$-linear maps $\ell^k: \odot^kV \rightarrow V$ extends uniquely to a coderivation
$\wh{\ell}: \ovl{S}V \rightarrow \ovl{S}V$ via the formula
\begin{align*}
 \wh{\ell}(v_1 \odot ... \odot v_n) &:= \sum_{k=1}^n\sum_{\sigma \in \Sh(k,n-k)}\sign(\sigma,V;v_1,...,v_n)\ell^k(v_{\sigma(1)}\odot ... \odot v_{\sigma(k)}) \odot v_{\sigma(k+1)} \odot ... \odot v_{\sigma(n)}\\
 &= \sum_{k=1}^n\sum_{\sigma \in \Sigma_k}\frac{\sign(\sigma,V;v_1,...,v_n)}{k!(n-k)!} \ell^k(v_{\sigma(1)}\odot ... \odot v_{\sigma(k)}) \odot v_{\sigma(k+1)} \odot ... \odot v_{\sigma(n)}  .
\end{align*}
Note that an $\K$-linear map $\ell^k: \odot^k(sV) \rightarrow sV$ is the same as a symmetric $\K$-linear map $\otimes^k (sV) \rightarrow sV$, i.e. one for which interchanging inputs $sv_i,sv_{i+1}$ introduces a sign $(-1)^{|sv_i| |sv_{i+1}|}$.
This is in turn equivalent to the desuspension $s^{-1}\circ \ell^k \circ s^{\otimes k}: \otimes^k V \rightarrow V$ being skew-symmetric,
i.e. interchanging inputs $v_i,v_{i+1}$ introduces a sign $-(-1)^{|v_i||v_{i+1}|}$. 
We can thus equivalently define an $\Li$ algebra as a sequence of maps $\ell^k: \odot^ksV \rightarrow sV$ of degree $1$ satisfying the quadratic relations 
\begin{align}
\sum_{k=1}^n\sum_{\sigma \in \Sh(k,n-k)}\sign(\sigma,V;sv_1,...,sv_n)\ell^{n-k+1}\left(\ell^k(sv_{\sigma(1)}\odot ... \odot sv_{\sigma(k)}) \odot sv_{\sigma(k+1)} \odot ... \odot sv_{\sigma(n)}\right) = 0,
\end{align}
or in terms of the degree $2-k$ desuspended maps $s^{-1} \circ \ell^k \circ s^{\otimes k}: \otimes^k V \rightarrow V$,
which are skew-symmetric and satisfy the same relations with a few extra signs.

\begin{definition}\label{def:bar}
Given an $\Li$ algebra as above, the chain complex $(\ovl{S}(sV),\wh{\ell})$ is called its {\em bar complex}, denoted by $\bar V$. 
\end{definition}
\begin{remark}
Note that the bar complex is generally much larger than the chain complex $(V,\ell^1)$.
There is also an $\Ai$ analogue.
Technically it is the {\em reduced} bar complex, whereas the unreduced version has underlying $\K$-module 
$S(sV)$ instead of $\ovl{S}(sV)$.
There is also a related but somewhat different complex, the Chevalley--Eilenberg complex of $V$, which plays an important role in Lie algebra cohomology. 
It is defined by dualizing the $k$-to-$1$ operations $\ell^k$ to produce $1$-to-$k$ operations on the dual space 
$(sV)^\vee$, and then assembling these into a differential on the symmetric tensor algebra of $(sV)^\vee$ via the cobar construction.
\end{remark}

\noindent 

\begin{remark}
Recall that a DGLA is a graded $\K$-module with a degree $1$ differential $\bdy$
and a degree $0$ bracket $[-,-]$ which is graded skew-symmetric, $[x,y] = -(-1)^{|x| |y|}[y,x]$,
satisfying the Jacobi identity 
\begin{align*}
[[x,y],z] + (-1)^{|z|(|x|+|y|)}[[z,x],y] + (-1)^{|x|(|y| + |z|)}[[y,z],x] = 0
\end{align*}
and the Leibniz rule
\begin{align*}
\bdy[x,y] = [\bdy x,y] + (-1)^{|x|}[x,\bdy y].
\end{align*}
This corresponds to the special case of an $\Li$ algebra with $\ell^k \equiv 0$ for all $k \geq 3$.
More precisely, given such an $\Li$ algebra structure on $V$, we recover the structure of a DGLA by setting 
$[x,y] := s^{-1} [-,-] \circ (s \otimes s)(x\otimes y) =  (-1)^{|x|}s^{-1}\ell^2(sx,sy)$.
\end{remark}

\begin{remark}
Instead of working with the quotient spaces $\otimes^kV/\Sigma_k$,
we could alternatively work with the fixed point spaces $(\otimes^kV)^{\Sigma_k}$
and define the reduced symmetric tensor coalgebra by
$\ovl{S}_\fix V := \bigoplus\limits_{k \geq 1} (\otimes^k V)^{\Sigma_k}$.
Here the coproduct on $\ovl{S}_\fix V$ is given by the restriction of the coproduct on $\ovl{T}V$.
Indeed, the obvious projection map from $\ovl{S}_\fix V$ to $\ovl{S}V$ is an isomorphism of coalgebras,
with inverse $N: \ovl{S}V \rightarrow \ovl{S}_\fix V$ given by 
\begin{align*}
N(v_1 \odot ... \odot v_k) = \frac{1}{k!}\sum_{\sigma \in \Sigma_k} \sigma \cdot (v_1 \otimes ... \otimes v_k).
\end{align*}
\end{remark}

\subsubsection{$\Li$ homomorphisms, quasi-isomorphisms, homotopies, and homotopy equivalences} 

We now define more key players in the homotopy category of $\Li$ algebras. 
For coalgebras $(C,\Delta)$ and $(C',\Delta')$, an $\K$-module map $F: C \rightarrow C'$ is a coalgebra homomorphism
if $$\Delta' \circ F = (F \otimes F) \circ \Delta.$$
Consider two $\Li$ algebras $V$ and $V'$
with corresponding coderivations 
$\wh{\ell}: \ovl{S}(sV) \rightarrow \ovl{S}(sV)$ and $\wh{\ell}': \ovl{S}(sV') \rightarrow \ovl{S}(sV')$.
\begin{definition}
An {\em $\Li$ homomorphism} from $V$ to $V'$ is a degree $0$ coalgebra homomorphism $\wh{\Phi}: \ovl{S}(sV) \rightarrow \ovl{S}(sV')$
satisfying $\wh{\Phi} \circ \wh{\ell} = \wh{\ell}' \circ \wh{\Phi}.$
\end{definition}
A coalgebra homomorphism $\wh{\Phi}: \ovl{S}(sV) \rightarrow \ovl{S}(sV')$ is uniquely determined by the maps $\Phi^k: \odot^k (sV) \rightarrow sV'$ given by the composition $\odot^k (sV) \hookrightarrow \ovl{S}(sV) \overset{\wh{\Phi}}\longrightarrow \ovl{S}(sV') \rightarrow sV'$,
and any sequence of maps $\Phi^k: \odot^k (sV) \rightarrow sV'$ can be uniquely extended to a coalgebra homomorphism $\wh{\Phi}: \ovl{S}(sV) \rightarrow \ovl{S}(sV')$ by the formula 
$$\wh{\Phi}(v_1\odot ... \odot v_n) := \sum_{\substack{k \geq 1\\ i_1 + ... + i_k = n}}\sum_{\sigma \in \Sigma_n} \frac{\sign(\sigma,V;v_1,...,v_n)}{k!i_1!...i_k!}(\Phi^{i_1} \odot ... \odot \Phi^{i_k})(v_{\sigma(1)}\odot ... \odot v_{\sigma(n)}).$$
We will typically write an $\Li$ homomorphism as $\Phi: V \rightarrow V'$ and think of it either as a sequence of maps $s^{-1}\circ \Phi^k \circ s^{\odot k}: \odot^k V \rightarrow V$, $k = 1,2,3,...$, where $\Phi^k$ has degree $1-k$, or equivalently as a single map of bar complexes $\wh{\Phi}:\ovl{S}(sV) \rightarrow \ovl{S}(sV')$ of degree $0$.

Now consider an $\Li$ algebra $V$ with operations $\ell_V^k: \odot^k sV \rightarrow sV$, and suppose that $C$ is a commutative differential graded algebra (CDGA) with differential $\bdy: C \rightarrow C$. In this case one can define a new $\Li$ algebra with underlying underlying $\K$-module $V\otimes_\K C$. 
The $\Li$ operations  
$\ell^k_{V \otimes C}: \odot^k(sV \otimes C) \rightarrow sV \otimes C$ are given by
\begin{align*}
&\ell^1_{V \otimes C}(sv \otimes c) = \ell^1_{V}(sv) \otimes c + (-1)^{|sv|}sv \otimes \bdy(c)
\end{align*}
for $k=1$, and 
\begin{align*}
\ell^k_{V \otimes C}(sv_1 \otimes c_1,...,sv_k \otimes c_k) = (-1)^{\sum\limits_{j < i}|sv_i| |c_j|}\ell^k_{V}(sv_1,...,sv_k) \otimes c_1...c_k
\end{align*}
for $k \geq 2$.

Let $\K[t,dt]$ denote the CDGA which is freely generated as a graded algebra by symbols $t$ and $dt$ with degrees $|t| = 0$ and $|dt| = 1$, with differential $\bdy(t) = dt$ and $\bdy(dt) = 0$.
We view $\K[t,dt]$ as the algebraic differential forms on the interval $[0,1]$. 
As a special case of the above construction, we can consider $V \otimes \K[t,dt]$.
Following \cite{Fukaya-deformation}, we can write a typical element of $sV \otimes \K[t,dt]$ as $P(t) + Q(t)dt$, where $P(t)$ and $Q(t)$ are polynomials with coefficients in $sV$. 
The $\Li$ operations can then be written as 
\begin{align*}
\ell^1(P(t) + Q(t)dt) = \ell^1(P(t)) + (-1)^{|P(t)|}\frac{dP}{dt}dt + \ell^1(Q(t))dt
\end{align*}
and
\begin{multline*}
\ell^k(P_1(t) + Q_1(t)dt,...,P_k(t) + Q_k(t)dt) = \\ \ell^k(P_1(t),...,P_k(t)) + \sum_{i=1}^k (-1)^{|P_{i+1}| + ... + |P_k|}\ell^k(P_1(t),...,Q_i(t),...,P_k(t))dt
\end{multline*}
for $k \geq 2$.

For $\Li$ homomorphisms $\Phi: V \rightarrow V'$ and $\Psi: V' \rightarrow V''$, the composite $\Li$ homomorphism $V \rightarrow V''$ is defined on the level of bar complexes by simply 
$\wh{\Psi} \circ \wh{\Phi}: \ovl{S}(sV) \rightarrow \ovl{S}(sV'')$.
For any $t_0 \in [0,1]$, there is an $\Li$ homomorphism $\Eval_{t = t_0}: V \otimes \K[t,dt] \rightarrow V$,
whose component with one input and one output sends $P(t) + Q(t)dt$ to $P(t_0)$, and which has no higher components.
\begin{definition}
Two $\Li$ homomorphisms $\Phi,\Psi: V \rightarrow V'$ are {\em $\Li$ homotopic} if there exists an $\Li$ homomorphism 
$H: V \rightarrow V' \otimes \K[t,dt]$ such that $\Eval_{t=0}\circ H = \Phi$ and $\Eval_{t=1}\circ H = \Psi$.
\end{definition}
\noindent A basic fact is that $\Li$ homotopy is an equivalence relation
(see \cite[\S 2]{Fukaya-deformation}).
\begin{definition}
Two $\Li$ algebras $V$ and $V'$ are {\em homotopy equivalent} if there exist $\Li$ homomorphisms $\Phi: V \rightarrow V'$ and $\Psi: V' \rightarrow V$ such that the compositions $\Psi \circ \Phi$ and $\Phi \circ \Psi$ are homotopic to the identity
(i.e. the $\Li$ endomorphism whose component with one input and one output is the identity and whose higher components are trivial).
\end{definition}

\subsection{Filtered $\Li$ algebras}\label{subsec:filtered_Linf_algebras}
 
An $\Li$ homomorphism $\Phi$ from $V$ to $V'$ is a {\em quasi-isomorphism} if the homology level map $H(V,\ell^1_V) \rightarrow H(V',\ell^1_{V'})$ induced by $\Phi^1$ is an isomorphism.
The following is a standard consequence of the homological perturbation lemma:
\begin{thm}\label{quasi-iso implies htpy eq}
If $\K$ is a field, an $\Li$ quasi-isomorphism of $\Li$ algebras over $\K$ is automatically an $\Li$ homotopy equivalence.
\end{thm}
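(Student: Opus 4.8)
The plan is to build the homotopy inverse explicitly using homological perturbation, working entirely on the level of bar complexes so that we stay within the world of $\Li$ homomorphisms. First I would fix the $\Li$ algebra $V$ with structure maps $\ell^k$, quasi-isomorphism $\Phi\colon V\to V'$ with first component $\Phi^1\colon sV\to sV'$ inducing an iso on $\ell^1$-homology, and choose a homotopy retract. Since $\K$ is a field, the underlying cochain complex $(sV,\ell^1_V)$ splits: pick a direct sum decomposition $sV = B\oplus H\oplus A$ where $B=\operatorname{im}\ell^1_V$, $H\oplus B = \ker\ell^1_V$ (so $H$ represents cohomology), and $A$ is a complement. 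This yields a contraction $(p,\iota,h)$ of $(sV,\ell^1_V)$ onto its cohomology $\mathcal{H}=H(sV,\ell^1_V)$, i.e.\ chain maps $p\colon sV\to\mathcal{H}$, $\iota\colon\mathcal{H}\to sV$ and a homotopy $h\colon sV\to sV$ of degree $-1$ with $\iota p - \operatorname{id} = \ell^1_V h + h\ell^1_V$, $ph=0$, $h\iota=0$, $h^2=0$; and similarly for $V'$ using that $\Phi^1$ is a quasi-isomorphism so the two cohomologies are identified.

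Next I would invoke the homotopy transfer theorem: given the contraction of $(sV,\ell^1_V)$ onto $\mathcal{H}$ and the higher operations $\ell^{\geq 2}_V$, the standard tree-summation formulas (sum over planar trees with vertices labeled by $\ell^k_V$, internal edges by $h$, leaves by $\iota$, root by $p$) produce a minimal $\Li$ structure on $\mathcal{H}$ together with an $\Li$ quasi-isomorphism $V\to\mathcal{H}$ and a quasi-inverse $\mathcal{H}\to V$, and the two composites are $\Li$ homotopic to identities --- this is exactly where the hypothesis that $\K\supseteq\Q$ is a field is used, both for the splitting and for convergence of the formulas, which are finite in each arity. Doing this for both $V$ and $V'$, and using that $\Phi$ induces an isomorphism $\mathcal{H}\xrightarrow{\sim}\mathcal{H}'$ of the transferred (minimal) $\Li$ algebras compatible with the transfer maps, I get a zig-zag $V\to\mathcal{H}\xrightarrow{\sim}\mathcal{H}'\to V'$ of $\Li$ homomorphisms, all but the middle one being homotopy equivalences by construction, and the middle one an honest $\Li$ iso; composing and using that $\Li$ homotopy is an equivalence relation (stated in the excerpt, citing \cite[\S2]{Fukaya-deformation}) yields a two-sided homotopy inverse $\Psi\colon V'\to V$ to $\Phi$.

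Alternatively --- and this is perhaps cleaner to write --- I would avoid choosing minimal models and instead argue directly on bar complexes: $\wh\Phi\colon\ovl{S}(sV)\to\ovl{S}(sV')$ is a map of the filtered cochain complexes $(\ovl{S}(sV),\wh\ell)$, where the bar differential $\wh\ell$ is filtered by word length with associated graded $\bigoplus_k\odot^k(sV,\ell^1_V)$. Since $\Phi^1$ is a quasi-isomorphism and $\K$ is a field, $\odot^k(\Phi^1)$ is a quasi-isomorphism on each associated graded piece (Künneth over a field), so by a spectral sequence / filtered comparison argument $\wh\Phi$ is a quasi-isomorphism of the bar complexes. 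Then one constructs $\wh\Psi$ inductively in word length: choose a cochain homotopy inverse on associated graded pieces using the field hypothesis, and correct order by order so that the result is a genuine coalgebra map commuting with $\wh\ell$; the obstruction at each stage lies in a cohomology group that vanishes because $\wh\Phi$ is a quasi-isomorphism, and the homotopies $\wh\Phi\wh\Psi\simeq\operatorname{id}$, $\wh\Psi\wh\Phi\simeq\operatorname{id}$ are produced the same way.

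The main obstacle is the bookkeeping in the inductive step --- ensuring at each word length that the candidate map can be chosen to be a coalgebra homomorphism (not merely a chain map) and that it commutes with the full coderivation $\wh\ell$ rather than just its linear part, with all Koszul signs correct. This is a standard but genuinely fiddly obstruction-theory argument; the conceptual content (field $\Rightarrow$ splitting $\Rightarrow$ perturbation converges) is routine, so in the write-up I would most likely just cite the homological perturbation lemma / homotopy transfer theorem in the form already available in \cite{Fukaya-deformation} and indicate the reduction above, rather than reproving it from scratch.
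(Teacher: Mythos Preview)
Your proposal is correct and aligns with the paper's treatment: the paper does not actually give a proof of this theorem, but simply states it as ``a standard consequence of the homological perturbation lemma,'' which is precisely the mechanism underlying your first approach (transfer to a minimal model via a contraction). Your write-up is therefore considerably more detailed than what the paper provides; your concluding remark --- that one would in practice just cite the homological perturbation / homotopy transfer theorem rather than reprove it --- matches exactly what the paper does.
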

\noindent In particular, any $\Li$ algebra which is acyclic (with respect to $\ell^1$) is homotopy equivalent to the trivial $\Li$ algebra.
This means that $\Li$ algebras up to homotopy equivalence, at least over a field, are not powerful enough to model the geometric situations we have in mind.
For example, for any star-shaped domain in $\C^n$, the $\Li$ structures on both symplectic cochains and $S^1$-equivariant symplectic cochains are acyclic, even though there are plenty of non-symplectomorphic pairs.
To remedy this and incorporate more quantitative symplectic geometric input, we now consider $\Li$ algebras with filtrations.

\sss

Let $R$ be a commutative ring. 
We say that $R$ has a {\em decreasing filtration} if there are $R$-linear subspaces $\calF_{\geq a}R \subset R$ for $a \in \R$, with
\begin{itemize}
\item
$\calF_{\geq a}R \subset \calF_{\geq a'}R$ for $a > a'$
\item
$(\calF_{\geq a}R) \cdot (\calF_{\geq a'}R) \subset \calF_{\geq a + a'}R$
\item
$\underset{a\in\R}\cup \calF_{\geq a}R = R$.
\end{itemize}
The filtration makes $R$ into a topological ring, wherein a basis of neighborhoods at $x \in R$ is given by $x + \calF_{\geq a}R$, $a \in \R$. A sequence $x_i \in R$ is said to be {\em Cauchy} if for any $a$ there exists $N$ such that $x_i - x_j \in \calF_{\geq a}\Lam$ for all $i,j \geq N$, and we say that $R$ is {\em complete} if every Cauchy sequence converges.

To give our main example, suppose that $\K$ is a commutative ring containing $\Q$ as before.
The (universal) Novikov ring $\Lamo$ over $\K$ is the ring of formal series
\begin{align*}
\Lamo := \left\{ \sum_{i = 1}^{\infty} c_iT^{a_i}\;:\; c_i \in \K,\; a_i \in \R_{\geq 0},\; \lim_{i \rightarrow \infty}a_i = +\infty\right\},
\end{align*}
where $T$ is a formal variable.
The filtration is given by 
\begin{align*}
\calF_{\geq a}\Lamo := T^a\cdot\Lamo = \left\{ \sum_{i = 1}^{\infty} c_iT^{a_i}\;:\; a_i \geq a\right\}
\end{align*}
for $a \geq 0$ and $\calF_{\geq a}V := V$ for $a < 0$.
There is also the (universal) Novikov field\footnote{Strictly speaking $\Lam$ is not a field unless $\K$ is a field, in which case $\Lam$ is the field of fractions of $\Lamo$.}
$\Lam$ over $\K$, defined by 
\begin{align*}
\Lam :=\left\{ \sum_{i = 1}^{\infty} c_iT^{a_i}\;:\; c_i \in \K,\; a_i \in \R,\; \lim_{i \rightarrow \infty}a_i = +\infty\right\},
\end{align*}
which is also complete and contains $\Lamo$ as a subring.

If $V$ is a $R$-module, a decreasing filtration consists of $R$-linear subspaces
$\calF_{\geq a}V \subset V$, $a \in \R$, satisfying
\begin{itemize}
\item
 $\calF_{\geq a}V \subset \calF_{\geq a'}V$ for $a > a'$
\item
$(\calF_{\geq a}R)\cdot (\calF_{\geq a'}V) \subset \calF_{\geq a+a'}V$
\item
$\underset{a\in\R}\cup \calF_{\geq a}V = V$.
 \end{itemize}
The filtration similarly induces a topology on $V$, and we define completeness for $V$ in essentially the same way as for $R$. 
Note that a filtration on $V$ induces one on $\ovl{S}(sV)$, namely
$$\calF_{\geq a}(\odot^ksV) := \underset{a_1 + ... + a_k \geq a}\bigcup \calF_{\geq a_1}sV \odot ... \odot \calF_{\geq a_k}sV.$$
\begin{definition}\label{filtered Linf algebra}
A {\em filtered $\Li$ algebra} over $R$ is an $\Li$ algebra over $R$ whose underlying $R$-module $V$ is equipped with a decreasing filtration $\{\calF_{\geq a}V\}$ such the coderivation $\wh{\ell}: \ovl{S}(sV) \rightarrow \ovl{S}(sV)$
preserves the induced filtration on $\ovl{S}(sV)$.
\end{definition}
\noindent In terms of the operations $\ell^k: \odot^ksV \rightarrow sV$, this corresponds to having
$$\ell^k(\calF_{\geq a_1}sV \odot ... \odot \calF_{\geq a_k}sV) \subset \calF_{\geq (a_1 + ... + a_k)}sV.$$
We similarly define filtered $\Li$ homomorphisms, homotopies, and homotopy equivalences by requiring all structure maps to respect filtrations; see the aforementioned references for more details.

\sss

Note that any filtered $R$-module can be canonically completed, and we will always implicitly assume this has been done.
A filtered $\Li$ algebra $V$ is {\em complete} if the underlying $R$-module is complete. 
Filtered $\Li$ algebras have better functoriality properties, for example Maurer--Cartan elements can be pushed forward (see \S\ref{subsec:MC_theory} below).
Any filtered $\Li$ algebra can be canonically completed by completing the $R$-module $V$ and extending the $\Li$ operations, and we will also assume this has been done.
Moreover, from now on in filtered settings we take all tensor products and direct sums to be completed,
although we suppress this from the notation.
For example, for $V$ a filtered $\Li$ algebra, $\ovl{S}(sV)$ will denote the naive reduced symmetric tensor algebra of $sV$, but completed with respect to the natural filtration-induced topology.

In this paper, all of the filtered $\Li$ algebras we encounter from symplectic geometry will be defined over $\Lamo$, such that the underlying $\Lamo$-module $V$ is torsion-free.
In this case, the filtration on $V$ is actually redundant, as it is uniquely determined by $\calF_{\geq a}V = T^a\cdot V$ for $a \geq 0$ and $\calF_{\geq a}V = V$ for $a < 0$.
From now on we will always assume the filtration is of this form and will therefore typically omit the qualifier ``filtered'', unless we wish to emphasis the presence of filtrations.

\begin{remark}\label{rmk:filtrations}
One defines $\Li$ algebras with {\em increasing} filtrations in an analogous way. In the context of exact symplectic manifolds, the invariants we consider can also be defined as $\Li$ algebras over $\K$ with increasing filtrations, where $\calF_{\leq a}V$ roughly corresponds to the span of (words of) Reeb orbits with (total) action at most $a$ (c.f. \S\ref{sec:intro}). 
By contrast, we will see in \S\ref{sec:RSFT} that working with decreasing filtrations over the Novikov ring as above remembers only {\em differences} between actions.
Despite this apparent loss of information, the latter will suffice to define symplectic capacities, and is more effective when discussing e.g. nonexact symplectic manifolds or Maurer--Cartan theory.
\end{remark}

\subsection{Maurer--Cartan theory}\label{subsec:MC_theory}
Given a filtered $\Li$ algebra $V$ over $\Lamo$, an element $\m \in \calF_{>0} V := \bigcup\limits_{\eps > 0}\calF_{\geq \eps}V$ with $|s\m| = 0$ is {\em Maurer--Cartan} if it satisfies the equation
\begin{align*}
\sum_{k=1}^\infty\frac{1}{k!}\ell^k (\underbrace{s\m,...,s\m}_k) = 0.
\end{align*}
Note that all of the summands land in the degree $1$ part of $sV$; equivalently, we have $|\m| = 1$ and all of the summands land in the degree $2$ part of $V$.
In the unfiltered setting this equation involves an infinite sum and hence is actually ill-defined.
However, since $V$ is filtered we can make sense of the Maurer--Cartan equation by asking the infinite sum to converge to zero in the filtration-induced topology.
Equivalently, if we put $$\exp(s\m) := \sum_{k = 1}^\infty \frac{1}{k!}(\underbrace{s\m\odot ... \odot s\m}_k) \in \bar V,$$ 
then $\m$ satisfies the Maurer--Cartan equation if and only if $\wh{\ell}(\exp(s\m)) = 0$.

Two Maurer--Cartan elements $\m,\m' \in V$ are said to be {\em gauge-equivalent} if there exists a Maurer--Cartan element $g \in V \otimes \Lamo[t,dt]$ with $\Eval_{t = 0}(g) = \m$ and $\Eval_{t = 1}(g) = \m'$.
Also, given an $\Li$ homomorphism $\Phi: V \rightarrow V'$,
we can push forward a Maurer--Cartan element $\m \in V$ to one in $V'$ via the formula
$$\Phi_*(\m) := \sum_{k =1}^{\infty} \frac{1}{k!}s^{-1}\Phi^k(\underbrace{s\m,...,s\m}_k).$$
A basic fact (see for example \cite[\S 2]{Fukaya-deformation}) is that the set of Maurer--Cartan elements modulo gauge equivalence is a filtered $\Li$ homotopy invariant of $V$.

Given a Maurer--Cartan element $m \in V$, we can deform $\ell^1$
to a new differential $\ell^1_\m$, defined by
\begin{align*}
\ell^1_{\m}(sx) := \sum_{i=1}^{\infty} \frac{1}{i!} \ell^{1+i}(sx,\underbrace{s\m,...,s\m}_{i}).
\end{align*}
In fact, the entire $\Li$ structure gets deformed via
\begin{align*}
\ell^k_\m(sx_1,...,sx_k) := \sum_{i=1}^{\infty} \frac{1}{i!} \ell^{k+i}(sx_1,...,sx_k,\underbrace{s\m,...,s\m}_{i}).
\end{align*}
Note that since $\m$ lives in the positive part $\calF_{>0}V$ of the filtration, these deformed operations preserve same the decreasing filtration,\footnote{We point out that if we instead try to work with {\em increasing} filtrations as in Remark~\ref{rmk:filtrations} then the analogous deformed $\Li$ operations do not a priori preserve the filtration.} and one can check that they still satisfy the $\Li$ structure equations. 
Also, since $|s\m| = 0$, all of these terms have the same degree, although it is sometimes useful to relax this condition by asking only that $|s\m|$ be even, in which case case we still get a deformed $\Li$ structure but with only a $\Z/2$ grading.
We will denote the $\Li$ algebra deformed by the Maurer--Cartan element $\m$ by $V_\m$.

\begin{remark}
Maurer--Cartan elements play an important role in deformation theory, and their most standard usage is in {\em formal} deformation theory.
For $V$ an unfiltered $\Li$ algebra over $\K$, we can tensor $V$ with say the formal power series ring $R\ll q \rr$ and then use the $q$-adic filtration to make sense of the Maurer--Cartan equation in $V \otimes R\ll q \rr$.
We call such Maurer--Cartan elements {\em formal} to distinguish them from the {\em filtered} Maurer--Cartan elements considered above.
In fact, the $\Li$ algebra $V$ defines a {\em formal deformation functor} which inputs an Artinian local $\K$-algebra $A$ and outputs the set of Maurer--Cartan elements in $V \otimes A$ modulo gauge equivalence.
Deformation problems in geometry typically give rise to such formal deformation functors,
and the Deligne philosophy states that, at least in characteristic zero, every nice deformation functor should be represented by some natural $\Li$ algebra $V$ (see \cite{Lurie_deformation} for a modern treatment).
\end{remark}

\section{Rational symplectic field theory}\label{sec:RSFT}

In this section we describe the basic geometry and algebra of rational symplectic field theory, which will serve as the backbone for the subsequent sections of this paper. After explaining the general geometric setup, defining punctured pseudoholomorphic curves, and recalling their compactness properties, we finally present the relevant RSFT formalism. Our approach is somewhat nonstandard in that we phrase everything in terms of filtered $\Li$ algebras, giving a language which is particularly useful for quantitative applications, and also for identifying parallels in Floer theory. We will subsequently build on this formalism in \S\ref{sec:geometric_constraints} by introducing additional $\Li$ augmentations, which we use in \S\ref{sec:construction_of_capacities} to construct symplectic capacities.

Symplectic field theory is an elaborate algebraic formalism which packages together counts of punctured pseudoholomorphic curves in symplectic manifolds with certain types of cylindrical ends.
After Gromov's seminal compactness theorem and the subsequent development of Gromov--Witten theory, Floer homology, and so on,
it was discovered that punctured pseudoholomorphic curves in symplectic cobordisms also admit a good compactness theory \cite{Hofer_pseudoholomorphic_curves_in_symplectizations, compactness_results_in_SFT}. 
In \cite{EGH2000}, Eliashberg--Givental--Hofer outlined the general formalism of symplectic field theory, which incorporates pseudoholomorphic curves of all genera with arbitrary numbers of positive and negative punctures. 
In a spirit akin to topological field theories, one associates data to both contact manifolds and to symplectic cobordisms between contact manifolds.
A key tool in SFT is the ``stretching the neck'' procedure, which allows one to cut symplectic manifolds along contact type hypersurfaces and thereby decompose symplectic manifolds into potentially simpler pieces. For example, this procedure makes it possible to decompose the Gromov--Witten invariants of a closed symplectic manifold
into the SFT invariants of a symplectic filling and a symplectic cap.
Eliashberg--Givental--Hofer also described simplified invariants such as rational symplectic field theory, which involves only genus zero curves, and the contact homology algebra, which further restricts to rational curves with only one positive end.
One can also consider relative analogues, replacing contact manifolds by Legendrian submanifolds and symplectic cobordisms by Lagrangian cobordisms between Legendrians.

Since its introduction, SFT has produced a large number of impressive geometric applications, with many more still expected. 
Most but not all applications so far have centered around the contact homology algebra  and its linearized version, and especially their relative cousins.
In general, computing the full SFT in examples and making fruitful use of its elaborate structure, especially in the presence of higher genus curves, rotating asymptotic markers, chain level structures, etc is still a highly active area of research.

\begin{remark}[on transversality and virtual techniques in SFT]\label{rmk:transversality}
As already mentioned in Disclaimer \ref{disclaimer_from_intro}, 
it is well-known that the compactified moduli spaces of curves involved in SFT are not 
generally transversely cut out within a classical perturbation framework.
The typical situation is that, for a generic choice of almost complex structure $J$, all relevant somewhere injective curves will be regular, but their multiple covers might appear with higher than expected dimension.
For example, one often encounters negative index branched covers of trivial cylinders in symplectizations which necessarily persist for any choice of almost complex structure.
Even in the seemingly innocuous case of cobordisms between four-dimensional ellipsoids, some of the counts one wishes to perform might be represented by multilevel buildings. Such configurations have expected codimension at least $1$ and hence ought not to exist for a generic 
$J$, if transversality were to hold.

This means that the structures outlined in \cite{EGH2000} can only be achieved by interpreting curve counts in a suitable virtual sense.
The situation is similar in Gromov--Witten theory or Floer theory, although in those cases one can often get away with classical perturbation techniques under additional topological assumptions such as semipositivity.
There are several different virtual frameworks being developed to achieve the goals of SFT.
See for example \cite{Pardon_algebraic_approach,polyfold_and_fredholm_theory,ishikawa2018construction,fish2018lectures} and the references therein for more details.
We note that arguments in this paper are based on the general structural properties of rational symplectic field theory 
 and not on the particulars of any virtual perturbation scheme.
For some of the computations given in \S\ref{subsec:first_computations}, we implicitly invoke the standard assumption that, for generic $J$, any curve which is already regular will persist in the presence of virtual perturbations.\footnote{This is sometimes referred to as the ``Obamacare axiom'', the slogan being ``if you like your curves you can keep them''.}
\end{remark}
\begin{addendum}
In the followup paper \cite{enumerating2}, we construct another sequence of capacities, denoted by $\{\widetilde{\gapac}_k\}$, which circumvents SFT transversality and agrees with the sequence $\{\gapac_k\}$ in many situations. We expect a similar approach to extend to many of the other capacities introduced in this paper.
\end{addendum}

\subsection{Geometric setup}\label{subsec:geometric_setup}
In order to make sense of symplectic invariants defined in terms of pseudoholomorphic curves, we will need to restrict the class of symplectic manifolds under consideration and choose various pieces of auxiliary data (e.g. an almost complex structure $J$). 
Adapting the general setup of \cite{EGH2000}, we focus our attention on certain symplectic cobordisms between contact manifolds.
The formalism is simplest when these cobordisms are exact, although we also explain in \S\ref{sec:CL} how this can be extended to the case of non-exact cobordisms.

Recall that a {\em contact form} $\alpha$ on an odd-dimensional oriented manifold $Y^{2n-1}$ is a $1$-form such that $\alpha \wedge (d\alpha)^{\wedge(n-1)} > 0$. A (co-oriented) contact structure on $Y$ is a hyperplane distribution $\xi \subset TY$ of the form $\xi = \ker\alpha$ for some contact form $\alpha$. By ``strict contact manifold'' we will mean a pair $(Y,\alpha)$ consisting of a manifold $Y$ and a contact one-form $\alpha$.
The contact form $\alpha$ is preserved by a canonical vector field $R_\alpha$ called the Reeb vector field, 
defined by the properties $d\alpha(R_\alpha,-) = 0$ and
$\alpha(R_\alpha) = 1$. For $T > 0$, a $T$-periodic orbit is by definition a $T$-periodic trajectory of $R_\alpha$, i.e. a path
$\gamma: [0,T] \rightarrow Y$ such that $\dot{\gamma}(t) = R_\alpha(\gamma(t))$ and $\gamma(T) = \gamma(0)$.
The period of $\gamma$ is equivalently given by $\int \gamma^*\alpha$, and 
we also say that $\gamma$ has {\em action} $\calA_\alpha(\gamma) = T$.
{\em Note: in the sequel we will frequently omit decorative subscripts and superscripts unless needed to clarify ambiguities, for example denoting $\calA_{\alpha}$ by simply $\calA$. We will also often suppress symplectic forms, contact forms, etc from the notation when no confusion should arise.}

Every Reeb orbit $\gamma$ has an underlying {\em simple} (i.e. embedded) Reeb orbit $\ovl{\gamma}$.
We denote by $\kappa_\gamma$ the covering multiplicity of $\gamma$, and sometimes write $\gamma = \ovl{\gamma}^{\kappa_{\gamma}}$.
We will assume throughout that every simple Reeb orbit $\ovl{\gamma}$ has a chosen basepoint $m_{\ovl{\gamma}}$ in its image.
This also induces a basepoint on the image of any multiple cover $\ovl{\gamma}^{\kappa_{\gamma}}$.
Note that parametrized Reeb orbits come in $S^1$-families, but from now we will consider all orbits in the same $S^1$-family to be equivalent and work only with the representative having $\gamma(0)$ as its basepoint.
\begin{remark}
Since we do not distinguish between Reeb orbits in the same $S^1$-family, we are effectively working with unparametrized Reeb orbits everywhere, and indeed SFT is ``by default'' an $S^1$-equivariant theory. In fact it is also possible to use SFT to construct non-equivariant invariants by a more refined use of asymptotic markers, c.f. Bourgeois--Oancea's model for symplectic cohomology \cite{bourgeois2009exact}.
\end{remark}

Observe that $d\alpha$ makes $\ker \alpha$ into a $(2n-2)$-dimensional symplectic vector bundle over $Y$.
Given a trivialization $\triv$ of this symplectic vector bundle over a Reeb orbit $\gamma$, the linearized Reeb flow along $\gamma$ can be identified with a path of $(2n-2)\times(2n-2)$ symplectic matrices starting at the identity. 
Unless otherwise stated, we will always assume the contact form $\alpha$ is nondegenerate, which means this path ends on a matrix without $1$ as an eigenvalue.
Associated to any such path of symplectic matrices is its {\em Conley--Zehnder index} $\cz_\triv(\gamma) \in \Z$,
which depends on the homotopy class of the trivialization $\triv$.
In fact, the parity of $\cz_\triv(\gamma)$ is determined just by the sign of $\mathbb{I} - P_\gamma$, where $P_\gamma$ is the linearized Poincar\'e return map of $\gamma$, and hence it does not depend on any choice of trivialization.

To any contact manifold $(Y,\alpha)$ we can associate the symplectization, i.e. the symplectic manifold $\R_s \times Y$ with exact symplectic form 
$d(e^s\alpha)$. 
We will also consider the half-symplectizations given by $(\R_\pm \times Y,d(e^s\alpha))$, where 
$\R_+ = [0,\infty)$ and $\R_- = (-\infty,0]$ (with $s$ still used to denote the coordinate on the first factor).
Due to the presence of a negative end, symplectizations do not satisfy the original ``geometrically bounded'' criterion of Gromov's compactness theorem. Nevertheless, it turns out there is a good compactness theory for pseudoholomorphic curves in symplectizations, and more generally in symplectic manifolds with ends modeled on positive or negative half-symplectizations.
\begin{definition}
A {\em compact symplectic cobordism} is a compact symplectic manifold $(X,\omega)$ equipped with a contact form $\alpha$ on $\bdy X$ such that $\alpha = \lam|_{\bdy X}$ for some primitive one-form $\la$ for $\omega$ defined near $\bdy X$.
A boundary component of $X$ is called {\em positive} if $\alpha \wedge (d\alpha)^{\wedge(n-1)}$ is positive with respect to the boundary orientation, otherwise it is called {\em negative}. We denote by $\bdy^+X$ the union of all positive boundary components of $X$ and $\alpha^+ := \alpha|_{\bdy^+X}$ 
the induced contact form, and similarly for $\bdy^-X$ and $\alpha^-$.
\end{definition}
\begin{remark}\label{rmk:caps_are_symp_invt}
We will often suppress the contact form $\alpha$ from the notation, or we will write $(X,\omega,\alpha)$ when we wish to make its role explicit.
Observe that the value of a symplectic capacity $\mathfrak{c}$ on a symplectic filling $(X,\omega,\alpha)$ is manifestly independent of the choice of $\alpha$ (due to monotonicity under symplectic embeddings), and the same is true for an exact symplectic capacity provided that $H^1(\bdy X;\R) = 0$ (c.f. Remark~\ref{rmk:auto_exact} below).
\end{remark}
\begin{definition}
A compact symplectic cobordism with $\bdy^-X = \nil$ is called a {\em symplectic filling} of $(\bdy^+X,\alpha^+)$,
 and a compact symplectic cobordism with $\bdy^+X = \nil$ is called a {\em symplectic cap} of $(\bdy^-X,\alpha^-)$. 
\end{definition}

Given a compact symplectic cobordism $(X,\omega)$ with $\omega|_{\Op(\bdy X)} = d\lam$,
we can form a {\em completed symplectic cobordism} (also called a {\em symplectic manifold with cylindrical ends} in \cite{EGH2000}) by attaching a positive half-symplectization $(\R_+\times \bdy^+X,d(e^s\alpha^+))$ to the positive boundary of $X$ and attaching a negative half-symplectization $(\R_-\times \bdy^-X,d(e^s\alpha^-))$ to the negative boundary of $X$.
Indeed, a standard observation is that the {\em Liouville vector field} $Z_\lam$, defined near $\bdy X$ by the condition $Z_\lam \ip d\lam = \lam$, gives a natural collar coordinate $s$ near $\bdy X$, with respect to which $\lam$ looks like $e^s\alpha$, and hence can be smoothly extended to the cylindrical ends. Note that $Z$ is outwardly transverse to each positive boundary component of $X$ and inwardly transverse to each negative boundary component of $X$. 
We denote the completion by $(\wh{X},\wh{\omega})$.

A compact symplectic cobordism is called {\em exact} if the one-form $\lam$ extends to a primitive for $\omega$ on all of $X$. 
This is also called a {\em Liouville cobordism} (with $\lam$ typically viewed as part of the data), and the case with no negative end is known as a {\em Liouville domain}.
\begin{definition}\label{def:exact_symp_emb}
Given symplectic fillings $(X,\omega,\alpha)$ and $(X',\omega',\alpha')$, an {\em exact symplectic embedding} is a symplectic embedding $\iota: X \hookrightarrow X'$ 
such that $Q := X' \setminus \iota(\op{Int}\, X)$ is an exact symplectic cobordism with respect to the primitives
$\alpha'$ on $\bdy X'$ and $\iota_*\alpha$ on $\iota(\bdy X)$.  
\end{definition}
\NI This is equivalent to the existence of a one-form $\theta \in \Omega^1(Q)$ such that $d\theta = \omega'$, $\theta|_{\bdy X'} = \alpha'$, and $\theta|_{\iota(\bdy X)} = \iota_*\alpha$.

\begin{lemma}
 Let $(X,\omega)$ be a symplectic filling of $(\bdy X,\alpha)$ and let $(X',\la')$ be a Liouville domain. Given a symplectic embedding $\iota: X \hooksymp X'$, put $Q := X' \setminus \iota(\op{Int}\, X)$, $\bdy^+Q := \bdy X'$, and $\bdy^-Q := \iota(\bdy X)$.
Then $\iota$ is exact if and only if $[\la'|_{\bdy^-Q} - \iota_*\alpha] \in H^1(\bdy^- Q;\R)$ lies in the image of restriction map
 $H^1(Q,\bdy^+Q;\R) \ra H^1(\bdy^-Q;\R)$. 
\end{lemma}
\begin{proof}
 If $\iota$ is exact, then by definition there exists $\theta \in \Omega^1(Q)$ such that $d\theta = \omega'$, $\theta|_{\bdy^+Q} = \alpha' := \la'|_{\bdy^+Q}$, and $\theta|_{\bdy^-Q} = \iota_*\alpha$.
Then $\eta := \la' - \theta \in \Omega^1(Q)$ is closed and satisfies $\eta|_{\bdy^+Q} \equiv 0$ and $\eta|_{\bdy^-Q} = \la'|_{\bdy^-Q} - \iota_*\alpha$.

Conversely, let $\eta \in \Omega^1(Q)$ be a closed one-form such that 
$\eta|_{\bdy^+Q} \equiv 0$ and $[\eta|_{\bdy^-Q}] = [\la'|_{\bdy^-Q} - \iota_*\alpha] \in H^1(\bdy^-Q;\R)$.
After a small modification (by adding an exact one-form supported near $\bdy^-Q$) we can assume that $\eta|_{\bdy^- Q} = \iota_*\alpha$, whence
$\theta := \la' - \eta$ satisfies $d\theta = \omega'$, $\theta|_{\bdy^+Q} = \alpha'$, and $\theta|_{\bdy^-Q} = \iota_*\alpha$.
\end{proof}
\begin{cor}\label{cor:auto_exact}
A symplectic embedding of a symplectic filling $(X,\omega,\alpha)$ into a Liouville domain $(X,\la')$ is automatically exact provided that $H^1(\bdy X;\R) = 0$. 
\end{cor}
\begin{remark}\label{rmk:auto_exact}
Given Liouville domains $(X,\la),(X',\la')$, a symplectic embedding $\iota: X \hooksymp X'$ is automatically exact provided that $H^1(X;\R) = 0$, or more generally that the restriction map $H^1(X;\R) \ra H^1(\bdy X;\R)$ vanishes.
Indeed, in this case $[\la'|_{\bdy^-Q} - \iota_*\alpha] \in H^1(\bdy^-Q;\R)$ vanishes if and only if $[\iota^*\la'|_{\bdy X} - \alpha] = [(\iota^*\la'| - \la)|_{\bdy X}] \in H_1(\bdy X;\R)$ vanishes, and the latter clearly lies in the image of $H^1(X;\R) \ra H^1(\bdy X;\R)$.
Note also that any generalized Liouville embedding in the sense of \cite[Def. 1.23]{Gutt-Hutchings_capacities} (i.e. $[(\iota^*\la' - \la)|_{\bdy X} \in H^1(\bdy X;\R)$ vanishes) is exact.
\end{remark}

\begin{remark}
Our definition of compact symplectic cobordism is sometimes called a {\em strong symplectic cobordism}. There are several other important types of symplectic cobordisms commonly appearing in the literature, for example {\em weak symplectic cobordisms}, {\em Weinstein cobordisms}, and {\em Stein cobordisms}. 
See \cite{Massot-Nieder-Wendl_fillings} for a more comprehensive discussion.

\end{remark}
\begin{remark}[on gradings of orbits in symplectic fillings]\label{rmk:on_gradings_in_symplectic_fillings}
Suppose that $(X,\omega)$ is a symplectic filling of $(Y,\alpha)$.
As mentioned above, each Reeb orbit of $(Y,\alpha)$ has a canonical $\Z/2$ grading.
In some cases, this can naturally be upgraded.
For example, if we assume that $2c_1(X,\omega) = 0$ and $H_1(X;\Z) = 0$, 
then each Reeb orbit $\gamma$ of $(Y,\alpha)$ bounds a spanning surface in $X$, and there is a unique (up to homotopy) trivialization $\triv$
of the symplectic vector bundle $(TX,\omega)$ over $\gamma$ which extends over this spanning surface.
Two different choices of spanning surfaces define an element $A \in H_2(X;\Z)$, and the two resulting Conley--Zehnder indices of $\gamma$ differ by $2c_1(X,\omega)\cdot A$. Therefore, in this case each Reeb orbit has a canonical Conley--Zehnder index $\cz(\gamma) \in \Z$.

More generally, if $2c_1(X,\omega) = 0$ and $H_1(X,\omega)$ is torsion-free, we can still assign an integer Conley--Zehnder index to each Reeb orbit, although it is no longer canonical. Rather, the space of choices is a torsor over $H_1(X;\Z)$, corresponding to a choice of trivialization of the symplectic vector bundle $(TX,\omega)$ over a set of basis elements for $H_1(X;\Z)$.
\end{remark}

Now suppose we have a symplectic cobordism $(X_1,\omega_1)$ with positive end $(\bdy^+X_1,\alpha_1^+)$ and another symplectic cobordism $(X_2,\omega_2)$ with negative end $(\bdy^-X_2,\alpha_2^-)$.
Assuming there is a strict contactomorphism $(\bdy^+X_1,\alpha_1^+) \cong (\bdy^-X_2,\alpha_2^-)$ (i.e. a diffeomorphism $\bdy^+X_1 \cong \bdy^-X_2$ pulling back $\alpha_2^-$ to $\alpha_1^+$),
 we can use the collar coordinates to glue the positive boundary of $X_1$ with the negative boundary of $X_2$, forming a new 
symplectic cobordism $(X_1,\omega_1) \circledcirc (X_2,\omega_2)$. In particular, if $(X_1,\omega_1)$ is a filling and $(X_2,\omega_2)$ is a cap, then $(X_1,\omega_1) \circledcirc (X_2,\omega_2)$ is a closed symplectic manifold.
One of the key features of SFT is that curves in such a glued cobordism can be understood in terms of curves in either of the two pieces.

Suppose that $(Y,\alpha)$ is a strict contact manifold, 
and let $\Gamma^+ = (\gamma_1^+,...,\gamma_{s^+}^+)$ and $\Gamma^- = (\gamma_1^-,...,\gamma_{s^-}^-)$
be tuples of Reeb orbits.
We denote by $H_2(Y,\Gamma^+\cup\Gamma^-;\Z)$ the homology group of $2$-chains $\Sigma$ in $Y$ with 
$\bdy \Sigma = \sum_{i=1}^{s^+}\gamma_i^+ - \sum_{j=1}^{s^-}\gamma_j^-,$
modulo boundaries of $3$-chains in $Y$.
Note that this group is a torsor over $H_2(Y;\Z)$.
Integrating over the closed two-form $d\alpha$ defines a symplectic energy homomorphism
\begin{align*}
[d\alpha]\cdot -:H_2(Y,\Gamma^+\cup \Gamma^-;\Z) \rightarrow \R.
\end{align*}
In fact, by Stokes' theorem we have simply 
$$[d\alpha]\cdot A = \sum_{i=1}^{s^+}\calA_\alpha(\gamma_i^+) - \sum_{j=1}^{s^-}\calA_\alpha(\gamma_j^-).$$
\begin{remark}
The notion of symplectic energy which we consider is purely homological and will mostly only be considered in the context of $J$-holomorphic curves or buildings. Note that it is not the same as symplectic area, which
would be given by integrating $d(e^s\alpha)$ and is typically infinite for the curves we consider. 
\end{remark}
\noindent Also, after choosing a trivialization $\triv$ of the symplectic vector bundle $(\ker \alpha,d\alpha)$ 
over the Reeb orbits of $\Gamma^+$ and $\Gamma^-$,
we have a relative first Chern class homomorphism
$$c_1^\triv(Y,\alpha) \cdot -: H_2(Y,\Gamma^+\cup\Gamma^-;\Z) \rightarrow \Z.$$
For $[\Sigma] \in H_2(Y,\Gamma^+\cup\Gamma^-;\Z)$, 
$c_1^\triv(Y,\alpha)\cdot [\Sigma]$ is the obstruction to finding a nonvanishing section of $(\ker \alpha,d\alpha)$ over $\Sigma$ which is constant along the ends with respect to the trivialization $\triv$.

Similarly, suppose that $(X,\omega)$ is a symplectic cobordism between $(Y^+,\alpha^+)$ and $(Y^-,\alpha^-)$,
and let $\Gamma^\pm = (\gamma_1^\pm,...,\gamma_{s^\pm}^\pm)$ be a tuple of Reeb orbits of $(Y^\pm,\alpha^\pm)$.
Let $H_2(X,\Gamma^+ \cup \Gamma^-;\Z)$ denote the homology group of $2$-chains $\Sigma$ in $X$ with $$\bdy \Sigma = \sum_{i=1}^{s^+}\gamma_i^+ - \sum_{j=1}^{s^-}\gamma_j^-,$$
modulo boundaries of $3$-chains in $X$.
Integrating over the symplectic form $\omega$ defines a symplectic energy homomorphism
$$ [\omega] \cdot -: H_2(X,\Gamma^+\cup\Gamma^-;\Z) \rightarrow \R.$$
If $(X,\omega)$ is an {\em exact} symplectic cobordism, by Stokes' theorem we have
$$[\omega]\cdot A =  \sum_{i=1}^{s^+}\calA_{\alpha^+}(\gamma_i^+) - \sum_{j=1}^{s^-}\calA_{\alpha^-}(\gamma_j^-).$$
Given a trivialization $\triv$ of the symplectic vector bundle $(TX,\omega)$,
we also have a relative first Chern class homomorphism
$$c_1^\triv(X,\omega) \cdot-: H_2(X,\Gamma^+ \cup \Gamma^-;\Z) \rightarrow \Z.$$

In the situation of a glued symplectic cobordism $(X_1,\omega_1)\circledcirc(X_2,\omega_2)$
with $\Gamma_1^+ = \Gamma_2^-$,
there is a natural concatenation map 
\begin{align*}
H_2(X_1,\Gamma_1^+\cup\Gamma_1^-;\Z) \otimes H_2(X_2,\Gamma_2^+\cup\Gamma_2^-;\Z) &\rightarrow H_2(X_1 \circledcirc X_2,\Gamma_2^+\cup \Gamma_1^-;\Z)\\
A_1 \otimes A_2 &\mapsto A_1\circledcirc A_2.
\end{align*}
There are also similar concatenation maps for gluing a symplectization to a symplectic cobordism or gluing two symplectizations, of the form
\begin{align*}
H_2(X,\Gamma_1^+\cup\Gamma_1^-;\Z) \otimes H_2(\bdy^+X,\Gamma_2^+\cup\Gamma_2^-;\Z) \rightarrow H_2(X,\Gamma_2^+\cup\Gamma_1^-;\Z)\\
H_2(\bdy^- X,\Gamma_1^+\cup\Gamma_1^-;\Z) \otimes H_2(X,\Gamma_2^+\cup\Gamma_2^-;\Z) \rightarrow H_2(X,\Gamma_2^+\cup\Gamma_1^-;\Z)\\
H_2(Y,\Gamma_1^+\cup\Gamma_1^-;\Z) \otimes H_2(Y,\Gamma_2^+\cup\Gamma_2^-;\Z) \rightarrow H_2(Y,\Gamma_2^+\cup\Gamma_1^-;\Z).
\end{align*}
The symplectic energy homomorphisms are additive with respect to these concatenation maps, as are the relative first Chern class maps (provided we pick appropriately compatible trivializations).

When discussing pseudoholomorphic curves, it will ocassionally be convenient
to let $\ovl{\wh{X}}$ denote the compactification of $\wh{X}$ given by replacing $\R_+ = [0,\infty)$ with $\ovl{R}_+ := [0,+\infty]$
and $\R_- = (-\infty,0]$ with $\ovl{\R}_- := [-\infty,0]$.
We of course have a natural identification $H_2(\ovl{\wh{X}},\Gamma^+\cup\Gamma^-;\Z) \cong H_2(X,\Gamma^+\cup\Gamma^-;\Z)$.
We will be considering punctured curves in $\wh{X}$ which extend by continuity to curves $u: \Sigma \rightarrow \ovl{\wh{X}}$, and hence naturally define classes in $A_u \in H_2(X,\Gamma^+\cup\Gamma^-;\Z)$.

\sss

\begin{remark}[on symplectizations as cobordisms]
For $(Y,\alpha)$ a strict contact manifold, the symplectization $(\R_s \times Y,d(e^s\alpha))$ looks roughly like the completion of the compact symplectic cobordism
$(X_M,\omega_M) := ([-M,M]\times Y,d(e^s\alpha))$ as $M \rightarrow 0$. 
However, the existence of a global $\R$-action by $s$-translations means that pseudoholomorphic curves in symplectizations behave somewhat differently,
and we also define symplectic energy differently in symplectizations. 
\end{remark}

\subsection{Punctured pseudoholomorphic curves}\label{subsec:punc_pseudo}

Given a strict contact manifold $(Y,\alpha)$,
an almost complex structure $J$ on the symplectization ${(\R_s \times Y,d(e^s\alpha))}$ is {\em admissible} if 
\begin{itemize}
\item
$J$ is invariant under $s$-translations and restricts to a $d\alpha$-compatible almost complex structure on $\ker\alpha$
\item $J$ sends $\bdy_s$ to $R_\alpha$.
\end{itemize}

For a Riemann surface $\Sigma$ and a point $p \in \Sigma$, an {\em asymptotic marker at $p$} is a real half-line in $T_p\Sigma$. 
Note that there is a circle's worth of choices of asymptotic markers at $p$.
Given an asymptotic marker at $p$, there is a contractible space of holomorphic identifications of $\Op(p)$ with the open unit disk $\D^2 \subset \C$ in such a way that the half-line at $p$ is identified with the positive real direction at the origin.
Endowing $\D^2$ with standard polar coordinates $(r,\theta)$, any such identification induces local polar coordinates on $\Op(p)$. 

Now fix an admissible $J$, and consider a collection of Reeb orbits $\gamma_1^+,...,\gamma_{s^+}^+$ 
and $\gamma_1^-,...,\gamma^-_{s^-}$ (possibly with repeats) with respect to $\alpha$.
A {\em pseudoholomorphic curve in the symplectization $\R \times Y$ with positive ends $\gamma_1^+,...,\gamma_{s^+}^+$ and negative ends $\gamma_1^-,...,\gamma^-_{s^-}$} 
consists of:
\begin{itemize}
\item
a closed Riemann surface $\Sigma$, with almost complex structure denoted by $j$ 
\item 
a collection of pairwise distinct points $z^+_1,...,z^+_{s^+},z_1^-,...,z_{s^-}^- \in \Sigma$, each equipped with an asymptotic marker
\item
a map $u: \dot{\Sigma} \rightarrow \R \times Y$
satisfying $du \circ j = J \circ du$, where $\dot{\Sigma}$ denotes the punctured Riemann surface $\Sigma \setminus \{z_1^+,...,z_{s^+}^+,z_1^-,...,z_{s^-}^-\}$
\item 
for each $z_i^+$, with corresponding coordinates $(r,\theta)$ compatible with the asymptotic marker, we have
$\lim\limits_{r \rightarrow 0}(\pi_\R\circ u)(re^{i\theta}) = +\infty$ and $\lim\limits_{r \rightarrow 0}(\pi_Y \circ u)(re^{i\theta}) = \gamma_i(\tfrac{1}{2\pi}T_i^+\theta)$
\item 
for each $z_j^-$, with corresponding coordinates $(r,\theta)$ compatible with the asymptotic marker, we have $\lim\limits_{r \rightarrow 0}(\pi_\R\circ u)(re^{i\theta}) = -\infty$ and $\lim\limits_{r \rightarrow 0}(\pi_Y \circ u)(re^{-i\theta}) = \gamma_j(\tfrac{1}{2\pi}T_j^-\theta)$.
\end{itemize}
Here $\pi_\R: \R \times Y \rightarrow \R$ and $\pi_Y: \R \times Y  \rightarrow Y$ denote the two projection maps defined on the ends, and $T_k^\pm$ denotes the period of $\gamma_k^{\pm}$.
We will freely identity the marked Riemann surface $\Sigma$ with the punctured Riemann surface $\dot{\Sigma}$, which carry essentially the same data, and correspondingly we also refer to the marked points $z_k^\pm$ as punctures.
For brevity, we sometimes refer to the above data as simply a ``curve'' and denote it by slight abuse of notation by $u: \Sigma \rightarrow \R \times Y$.

For shorthand, put $\Gamma^{\pm} = (\gamma_1^\pm,...,\gamma_{s^{\pm}}^\pm)$.
Let $\calM_{Y,g}^J(\Gamma^+;\Gamma^-)$ denote the moduli space of $J$-holomorphic curves in $\R \times Y$ with positive ends $\gamma_1^+,...,\gamma_{s^+}^+$ and negative ends $\gamma_1^-,...,\gamma_{s^-}^-$, with domain varying over all connected Riemann surfaces $\Sigma$ of genus $g$ with varying punctures $z_1^+,...,z_{s^+}^+,z_1^-,...,z_{s^-}^-\in \Sigma$ and varying asymptotic markers.
In this moduli space two such curves are equivalent if the corresponding holomorphic maps differ by a biholomorphic reparametrization of their domains 
sending one ordered set of punctures with asymptotic markers to the other. 
A curve $u: \Sigma \rightarrow \R \times Y$ is called {\em simple} if it 
does not factor as $u' \circ f$ for a holomorphic map $f: \Sigma \rightarrow \Sigma'$ and another curve $u': \Sigma' \rightarrow \R \times Y$.
A curve $u$ which does admit such a covering is called a {\em multiple cover} of $u'$.

For generic admissible $J$, the expected dimension of the moduli space 
$\Gamma^{\pm} = (\gamma_1^\pm,...,\gamma_{s^{\pm}}^\pm)$
is given by the index formula
$$\ind(u) = (n-3)(2-2g - s^+ - s^-) + \sum_{i=1}^{s^+}\cz(\gamma_i^+) -  \sum_{j=1}^{s^-}\cz(\gamma_j^-) + 2c_1(Y,\alpha)\cdot A_u.$$
Here $A_u \in H_2(Y,\Gamma^+ \cup \Gamma^-;\Z)$ is the homology class naturally associated to the curve $u$.
Note that although the Conley--Zehnder index and relative first Chern class terms both depend on choices of trivializations, the full expression for $\ind(u)$ does not.

An important feature of the moduli space $\calM_{Y,g}^J(\Gamma^+;\Gamma^-)$ is that it admits a free $\R$-action given by translations in the $s$ coordinate, since $J$ itself is globally $s$-translation invariant.
By a Sard--Smale type argument, for generic admissible $J$ the quotient space $\calM_{Y,g}^J(\Gamma^+;\Gamma^-)/\R$ near a simple curve is a smooth orbifold of dimension
$$(n-3)(2-2g - s^+ - s^-) + \sum_{i=1}^{s^+}\cz(\gamma_i^+) -  \sum_{j=1}^{s^-}\cz(\gamma_j^-) + 2c_1(Y,\alpha)\cdot A_u - 1.$$
Note that for $u$ regular, the existence of a free $\R$-action implies that $\ind(u) \geq 1$ unless $u$ is a union of trivial cylinders. Here by {\em trivial cylinder} we mean a 
pseudoholomorphic curve $u:\R \times S^1 \rightarrow \R \times Y$ such that $\pi_Y\circ u$ is constant in the $\R$ direction of the domain.

\sss

Similarly, let $(X,\omega)$ be a compact symplectic cobordism, with $\lam$ a primitive for $\omega$ on $\Op(\bdy X)$ and 
$\alpha_{\pm} := \lam|_{\bdy_{\pm}X}$ the induced contact forms. Let $(\wh{X},\wh{\omega})$ denote the completion.
An almost complex structure $J$ on $\wh{X}$ is {\em admissible} if 
\begin{itemize}
\item $J$ is compatible with $\wh{\omega}$
\item on $\wh{X} \setminus X$, $J$ is invariant under $s$-translations and restricts to a $d\alpha^{\pm}$-compatible almost complex structure on $\ker\alpha^{\pm}$
\item on the end $\R_\pm \times \bdy_{\pm} X$, $J$ sends $\bdy_s$ to $R_{\alpha_{\pm}}$.
\end{itemize}
Given Reeb orbits $\Gamma^+ = (\gamma_1^+,...,\gamma_{s^+}^+)$ with respect to $\alpha^+$ and $\Gamma^- = (\gamma^-_1,...,\gamma^-_{s^-})$ with respect to $\alpha^-$, we define the moduli space $\calM_{X,g}^J(\Gamma^+;\Gamma^-)$ in the same way as above, except that now the target space is the completed symplectic cobordism $\wh{X}$ rather than the symplectization $\R \times Y$.
For generic admissible $J$, near a simple curve $u$ the moduli space $\calM_{X,g}^J(\Gamma^+;\Gamma^-)$ is a smooth orbifold of dimension
$$\ind(u) = (n-3)(2-2g - s^+ - s^-) + \sum_{i=1}^{s^+}\cz(\gamma_i^+) -  \sum_{j=1}^{s^-}\cz(\gamma_j^-) + 2c_1(X,\omega) \cdot A_u,$$
and there is no longer any $\R$ action due to the lack of $s$-translation symmetry.
As before, $A_u \in H_2(X,\Gamma^-\cup\Gamma^-;\Z)$ is the homology class associated to $u$.

We define $\calM_{Y,g,A}^J(\Gamma^+;\Gamma^-) \subset \calM_{Y,g}^J(\Gamma^+;\Gamma^-)$ as the subspace of curves with associated homology class $A_u = A \in H_2(Y,\Gamma^+\cup\Gamma^-;\Z)$.
For $r \geq 1$ we can also define $\calM_{Y,g,A,r}^J(\Gamma^+;\Gamma^-)$ in the same way except that the domain of any curve $u$ is equipped with $r$ additional pairwise distinct marked points $z_1,...,z_r \in \dot{\Sigma}$, and biholomorphic reparametrizations must respect these marked points (with their ordering).
Note that the expected dimension of $\calM_{Y,g,A,r}^J(\Gamma^+;\Gamma^-)/\R$ is $$(n-3)(2-2g - s^+ - s^-) + \sum_{i=1}^{s^+}\cz(\gamma_i^+) -  \sum_{j=1}^{s^-}\cz(\gamma_j^-) + c_1(X,\omega)\cdot A_u + 2r - 1.$$
The moduli spaces $\calM^J_{X,g,A}(\Gamma^+;\Gamma^-)$ and $\calM^J_{X,g,A,r}(\Gamma^+;\Gamma^-)$ are defined similarly.

\sss

 Given a contact manifold $(Y,\alpha)$, an admissible almost complex structure $J$, and
 a curve $u: \Sigma \rightarrow \R \times Y$ in $\calM^J_{Y,g}(\Gamma^+;\Gamma^-)$,
we have $$0 \leq \int u^*(d\alpha) = [d\alpha]\cdot A_u = \sum_{i=1}^{s^+}\calA(\gamma_i^+) - \sum_{j=1}^{s^-}\calA(\gamma_j^-),$$
where the first inequality follows from compatibility of $J$ with $d\alpha$ along $\ker \alpha$.
In particular, this implies
\begin{align}\label{eq:action_eq_symp}
\sum_{i=1}^{s^+}\calA(\gamma_i^+) = \sum_{j=1}^{s^-}\calA(\gamma_j^-) + [d\alpha]\cdot A_u \geq \sum_{j=1}^{s^-}\calA(\gamma_j^-),
\end{align}

i.e. the action is nondecreasing from the top to the bottom of a curve.
Moreover, this inequality is strict unless the curve is a union of trivial cylinders or a multiple cover thereof.

Similarly, for $(X,\omega)$ a symplectic cobordism between $(Y^+,\alpha^+)$ and $(Y^-,\alpha^-)$, $J$ an admissible almost complex structure,
and $u: \Sigma \rightarrow \wh{X}$ a curve in $\calM^J_{X,g}(\Gamma^+;\Gamma^-)$,
the symplectic energy of $u$ is nonnegative and is given by the integral
of the piecewise smooth two-form $$(d\alpha^+)|_{\R_+ \times \bdy^+X} +\omega|_{X} + (d\alpha^-)|_{\R_- \times \bdy^-X}.$$
This agrees with $[\omega] \cdot A_u$, where
$A_u \in H_2(X,\Gamma^+\cup\Gamma^-;\Z) \cong H_2(\ovl{\wh{X}},\Gamma^+\cup\Gamma^-;\Z)$ is the homology class associated to $u$. 
In the case that the symplectic cobordism $(X,\omega)$ is {\em exact}, Stokes' theorem applied thrice gives
\begin{align}\label{eq:action_eq_cob}
\sum_{i=1}^{s^+}\calA(\gamma_i^+) = \sum_{j=1}^{s^-}\calA(\gamma_j^-) + [\omega] \cdot A_u\geq \sum_{j=1}^{s^-}\calA(\gamma_j^-).
\end{align}

\subsection{SFT compactness and neck stretching}\label{subsec:SFT_comp_and_neck}

The SFT compactness theorem extends Gromov's compactness theorem to punctured curves in symplectic cobordisms. As in the case of closed curves in closed symplectic manifolds, we must add nodal configurations \`a la Kontsevich's stable map compactification. However, we must additionally allow multi-level pseudoholomorphic ``buildings'', consisting of various curves in different symplectic cobordisms whose asymptotic Reeb orbits agree end-to-end to form a chain.

More precisely, let $(Y,\alpha)$ be a strict contact manifold, and let $J$ be an admissible almost complex structure.
Let $\dcalM_{Y,g,A,r}^J(\Gamma^+;\Gamma^-)/\R$ denote 
the partially compactified version of $\calM_{Y,g,A,r}^J(\Gamma^+;\Gamma^-)/\R$ in which 
curves are no longer required to be connected and they
are allowed to develop nodes, provided that no irreducible component is a constant sphere with less than three special points, 
a constant torus with no special points, or a disjoint union of trivial cylinders without marked points.
The SFT compactness theorem \cite{compactness_results_in_SFT} in this situation states that any sequence of curves $u_1,u_2,u_3,... \in \calM_{Y,g,A,r}^J(\Gamma^+;\Gamma^-)/ \R$ converges in a certain suitable Gromov--Hofer topology to a {\em pseudoholomorphic building} consisting of curves $v_i \in \dcalM_{Y,g_i,A_i,r_i}^J(\Gamma^+_i;\Gamma^-_i)/\R$, $i = 1,...,a$, for some $a \geq 1$, such that 
\begin{enumerate}
\item $\Gamma^+_i = \Gamma^-_{i+1}$ for $i = 1,...,a-1$
\item the total arithmetic genus is $g$
\item $A_1 \circledcirc ... \circledcirc A_a = A$
\item $r_1 + ... + r_a = r$.
\end{enumerate}
Each $v_i$ corresponds to a ``level'' of the building, and we have listed them from bottom to top, i.e. $v_1$ is the ``ground level'', etc.
Note that symplectization curves are considered only up to the $\R$-action translating in the $\R$ direction in the target.
There is actually another ambiguity in the pseudoholomorphic building which is related to the asymptotic markers. 
Namely, suppose $\gamma$ is a Reeb orbit appearing as a positive end of $v_i$ and as a negative end of $v_{i+1}$, and as usual let $\kappa_\gamma$ denote its covering multiplicity. 
The underlying simple Reeb orbit has a basepoint $m_{\gamma}$ and there are $\kappa_\gamma$ possibilities for the location of the corresponding asymptotic marker in the domain of $v_i$, and similarly for $v_{i+1}$. 
This gives $\kappa_\gamma^2$ possibilities, but in fact we do not distinguish between orbits of the $\Z_{\kappa_{\gamma}}$-action which increments both choices according to the Reeb orientation.

\sss

Next, suppose that $(X,\omega)$ is a symplectic cobordism between $(Y^+,\alpha^+)$ and $(Y^-,\alpha^-)$. 
Let $J$ be any admissible almost complex structure on $\wh{X}$, and let $J^\pm$ denote the induced translation invariant almost complex structures on the symplectizations of $(Y^\pm,\alpha^\pm)$.
Consider a sequence of curves $u_1,u_2,u_3,... \in \calM_{Y,g,A,r}^J(\Gamma^+;\Gamma^-)$.
In this case the SFT compactness theorem takes a similar form, with the limiting pseudoholomorphic building consisting of curves
$v_1,...,v_{a+b+1}$ with $a,b \geq 0$ and 
\begin{itemize}
\item $v_i \in \dcalM_{Y^-,g_i,A_i,r_i}^{J^-}(\Gamma_i^+;\Gamma_i^-)/\R$ for $i=1,...,a$
\item $v_{i} \in \dcalM_{X,g_{i},A_{i},r_{i}}^J(\Gamma_{i}^+;\Gamma_{i}^-)$ for $i = a+1$
\item $v_i \in \dcalM_{Y^+,g_i,A_i,r_i}^{J^+}(\Gamma_i^+;\Gamma_i^-)/\R$ for $i=a+2,...,a+b+1$, 
\end{itemize}
again satisfying the conditions
\begin{itemize}
\item $\Gamma_i^+ = \Gamma^-_{i+1}$ for $i = 1,...,a+b$
\item the total arithmetic genus is $g$
\item $A_1 \circledcirc ... \circledcirc A_{a+b+1} = A$
\item $r_1 + ... + r_{a+b+1} = r$
\end{itemize}
and with the same ambiguities regarding symplectization curves and asymptotic markers.

\sss

Next, we recall the neck stretching procedure and the corresponding compactness result.
Consider a symplectic cobordism $(X,\omega) = (X_1,\omega_1)\circledcirc (X_2,\omega_2)$ obtained by gluing together two symplectic cobordisms along a common contact boundary $(Y,\alpha)$. 
We assume that $\omega|_{\Op(Y)} = d\lam$, with $\lam|_Y = \alpha$, and hence the flow of the Liouville vector field $Z_\lam$ defines a collar coordinate $s$ on $\Op(Y)$ with respect to which 
$(\Op(Y),\omega|_{\Op(Y)})$ looks like $((-\e,\e) \times Y, d(e^s\alpha))$.
We will describe a family $J_t$ of admissible almost complex structures on $(X,\omega)$ which limits to a singular one.
We find it convenient to describe $J_t$ on a varying family of symplectic manifolds $(X_t,\omega_t)$ which are all diffeomorphic to $(X,\omega)$.\footnote{We note that each $(X_t,\omega_t)$ is a fortiori symplectomorphic to $(X,\omega)$ in the case that $(X_1,\omega_1)$ is a Liouville domain.}
Namely, for $t >0$ set $X_t = X_1 \cup \left( [-t,0] \times Y\right) \cup X_2$, with the symplectic form $\omega_t$ given by
\begin{itemize}
\item
$e^{-t}\omega_1$ on $X_1$
\item
$e^t\alpha$ on $[-t,0] \times Y$
\item
$\omega_2$ on $X_2$.
\end{itemize}
Let $J$ be an admissible almost complex structure on $X$ which on $(-\e,\e) \times Y$ is the restriction of an admissible almost complex structure $J_Y$ on the symplectization of $Y$.
Let $J_i$ denote the restriction of $J$ to $X_i$ for $i =1,2$.
We endow $X_t$ with the almost complex structure $J_t$ given by
\begin{itemize}
\item
$J_1$ on $X_1$
\item
the restriction of $J_Y$ on $[-t,0] \times Y$
\item
$J_2$ on $X_2$.
\end{itemize}
In this situation the SFT compactness theorem states that any sequence of curves $u_1,u_2,u_3,... \in \calM_{X,g,r,A}^{J_{t_i}}(\Gamma^+;\Gamma^-)$ with 
$t_i \rightarrow +\infty$ converges to a pseudoholomorphic building consisting of curves $v_1,...,v_{a+b+c+2}$ with $a,b,c \geq 0$ and
\begin{itemize}
\item $v_i \in \dcalM_{\bdy^-X_1,g_i,A_i,r_i}^{J_1^-}(\Gamma^+_i,\Gamma_i^-)/\R$ for $i = 1,...,a$
\item $v_i \in \dcalM_{X_1,g_i,A_i,r_i}^{J_1}(\Gamma^+_i,\Gamma_i^-)$ for $i = a+1$
\item $v_i \in \dcalM_{Y,g_i,A_i,r_i}^{J_Y}(\Gamma^+_i,\Gamma_i^-)/\R$ for $i = a+2,...,a+b+1$
\item $v_i \in \dcalM_{X_2,g_i,A_i,r_i}^{J_2}(\Gamma^+_i,\Gamma_i^-)$ for $i = a+b+2$
\item $v_i \in \dcalM_{\bdy^+X_2,g_i,A_i,r_i}^{J_2^+}(\Gamma^+_i,\Gamma_i^-)/\R$ for $i = a+b+3,...,a+b+c+2$,
\end{itemize}
satisfying the same conditions as before regarding matching ends and so on.

\sss

Given a moduli space $\calM^J_{Y,g,A,r}(\Gamma^+;\Gamma^-)/\R$ of expected dimension zero, it makes sense at least in principle to assign a rational number $\# \calM^J_{Y,g,A,r}(\Gamma^+;\Gamma^-)/\R \in \Q$
by counting the number of elements, according to coherently chosen signs as in \cite[\S 1.8]{EGH2000} and 
weighted by $\frac{1}{|\op{Aut}|}$, where $\op{Aut}$ is the automorphism group of a given curve.
Typically, in order to get relations between these curve counts one needs not just compactness but also gluing results describing the local structure of compactified moduli spaces near their boundary strata. 
Ideally we would like to say that for example each moduli space $\calM^J_{Y,g,A,r}(\Gamma^+;\Gamma^-)/\R$ admits a compactification $\ovl{\calM}^J_{Y,g,A,r}(\Gamma^+;\Gamma^-)/\R$ as a smooth stratifed space, with combinatorially indexed boundary strata which correspond precisely to the limiting pseudoholomorphic buildings described in the SFT compactness theorem. 
Each additional node should increase the codimension by two and each additional level should increase the codimension by one.
In particular, we would like to argue that for generic $J$ all moduli spaces $\calM^J_{Y,g,A,r}(\Gamma^+;\Gamma^-)/\R$ of negative expected dimension are empty, while all moduli spaces of expected dimension $0$ or $1$ are regular, with the $1$-dimensional moduli spaces admitting compactifications $\ovl{\calM}^J_{Y,g,A,r}(\Gamma^+;\Gamma^-)/\R$ as one-dimensional manifolds, with boundary points consisting of two-level pseudoholomorphic buildings.
Although this can generally only be achieved in a virtual sense, we can still read off relations between curve counts by looking at the
boundary strata of expected codimension one in 
 compactified moduli spaces of expected dimension one.

In the sequel we will often omit $J$ from the notation when it is implicit from the context. From now on we will only consider genus zero curves and thus we omit $g$ from the notation, i.e. we put $\calM_{Y,A,r}(\Gamma^+;\Gamma^-)$ instead of $\calM_{Y,g=0,A,r}(\Gamma^+;\Gamma^-)$ and so on.

\subsection{Rational symplectic field theory formalism}\label{subsec:rsft_formalism}

In this subsection we describe the algebraic formalism underpinning rational symplectic field theory.
Firstly, to a strict contact manifold $(Y,\alpha)$ we associate the contact homology algebra $\cha(Y)$. 
This is a CDGA\footnote{Recall that by default we work at chain level, with passage to homology explicitly notated.}
 over $\Lamo$, freely generated as an algebra by a variable for each good\footnote{A Reeb orbit is ``bad'' if it is an even cover of another Reeb orbit whose Conley--Zehnder index has a different parity, otherwise it is ``good''.
Bad orbits are excluded for reasons having to do with signs and coherent orientations of moduli spaces (see \cite[\S 1.8]{EGH2000}).}
Reeb orbit of $(Y,\alpha)$, with
differential counting curves in the symplectization of $Y$ with one positive end and any nonnegative number of negative ends.
An exact symplectic cobordism $(X,\omega)$ between $(Y^+,\alpha^+)$ and $(Y^-,\alpha^-)$ induces a CDGA
homomorphism $\Phi_X: \cha(Y^+) \rightarrow \cha(Y^-)$. In the special case that $(X,\omega)$ is a filling of $(Y,\alpha)$, we get a CDGA augmentation $\aug: \cha(Y) \rightarrow \Lamo$. Given such an augmentation, we can ``linearize'' $\cha(Y)$ to obtain a much smaller chain complex $\chlin(X)$ called the linearized contact homology.
This can be interpreted geometrically as the chain complex over $\Lamo$ generated by the good Reeb orbits of $(Y,\alpha)$, with differential counting ``anchored'' cylinders in the symplectization of $(Y,\alpha)$, i.e. cylinders with some number of extra negative punctures bounding pseudoholomorphic planes in the filling $(X,\omega)$.

Next, we incorporate rational curves with more than one positive end.
In the original formulation of \cite{EGH2000}, one associates to $(Y,\alpha)$ a differential Poisson algebra with variables $q_{\gamma},p_{\gamma}$ for each good Reeb orbit, with
differential defined in terms of rational curves in the symplectization of $(Y,\alpha)$, and with symplectic cobordisms inducing correspondences. 
Instead, roughly following the ``$q$-variable only'' approach from \cite{hutchings_2013}, we will associate to $(Y,\alpha)$ an $\Li$ structure over $\Lamo$ whose underlying chain complex is $\cha(Y)$. 
This approach seems more natural for extracting quantitative invariants. We can also ``linearize'' this $\Li$ structure to arrive at a much smaller $\Li$ structure with underlying chain complex $\chlin(X)$. 

\begin{addendum}\label{add:janko}
After the first version of this paper appeared, the recent work \cite{latschev2022remarks} discusses how our quantitative RSFT formalism can be recast in terms of the original formulation of \cite{EGH2000}; we refer the reader to loc. cit. for more details.
\end{addendum}

\begin{remark}[on the relationship between SFT and Floer theory]\label{rmk:relationship_between_SFT_and_Floer}
Rational symplectic field theory is related to Floer theory as follows. Over $\mathbb{Q}$, linearized contact homology $\chlin(X)$ was shown in \cite{Bourgeois-Oancea_equivariant} to be chain homotopy equivalent to $\sc_{S^1,+}(X)$, the positive version of $S^1$-equivariant symplectic cochains.
The same authors showed that there is also a non-equivariant version of linearized contact homology, $\op{CH}_{\op{lin,non-eq}}(X)$, with two variables 
variables $\hat{\gamma},\check{\gamma}$ for each good Reeb orbit $\gamma$, 
 which is equivalent to positive symplectic cochains $\sc_+(X)$.
There is a natural chain map from $\op{CH}_{\op{lin,non-eq}}(X)$ to the Morse cochain complex $C(X)$, and the mapping cone 
gives ``filled non-equivariant linearized contact homology'', $\op{CH}_{\op{lin,non-eq,filled}}(X)$, which is equivalent to symplectic cochains $\sc(X)$.

Next, \cite{Ekholm-Oancea_DGAS} constructs an $\Li$ coalgebra structure on $\sc_{S^1}(X,\omega)$, defined by counting Floer curves with one positive end and several negative ends, with negative ends carrying certain weights which vary over a simplex. Via the cobar construction, this can also be packaged as a CDGA whose underlying algebra is the symmetric tensor algebra on $\sc_{S^1}(X)$. 
This latter CDGA is shown in \cite{Ekholm-Oancea_DGAS} to be equivalent to the contact homology algebra $\cha(Y)$,
and there is also a non-equivariant version which is equivalent to a non-equivariant version of the contact homology algebra.
Moreover, there are also $\Li$ structures extending the differentials on $\sc(X)$ and $\sc_{S^1}(X)$, which can be defined by using the mapping telescope framework of \cite{abouzaid2010open}.

More speculatively, it seems reasonable to expect that the $\Li$ structure on $\cha(X)$ (described in \S\ref{subsubsec:Li_str_on_CHA}), and perhaps also a non-equivariant analogue thereof, should have equivalent models in Floer theory. Roughly, one ought to count rational Floer curves with several positive and negative punctures, combining the mapping telescope approach of \cite{abouzaid2010open} with the simplex approach of \cite{Ekholm-Oancea_DGAS}. This would give a Floer-theoretic analogue of rational symplectic field theory.
However, to the author's knowledge such a construction has not been attempted in the literature.
\end{remark}

\subsubsection{The contact homology algebra and its linearization}\label{subsubsec:cha_and_its_linearization}

We first recall the definition of the contact homology algebra.
 Given a strict contact manifold $(Y,\alpha)$, 
let $\X$ denote the $T$-adic completion of the $\Lamo$-module generated by a formal variable $x_{\gamma}$ for each good Reeb orbit $\gamma$ of $(Y,\alpha)$.
We endow $\X$ with a grading such that $|x_\gamma| = n - 3 - \cz(\gamma)$. 
Along the lines of Remark \ref{rmk:on_gradings_in_symplectic_fillings}, this is only a $\Z/2$ grading, 
although in many cases it can be upgraded to say a $\Z$ grading (see Remark \ref{rmk:on_gradings} below for the relevant discussion).
Let $\A$ denote the $T$-adic completion of $S\X$.
That is, $\A$ is the free supercommutative unital $\Lamo$-algebra generated by the good Reeb orbits of $(Y,\alpha)$.\footnote{Recall from \S\ref{subsec:filtered_Linf_algebras} that our conventions are such that both $\X$ and $S\X$ are always completed with respect to the $T$-adic topology.}
Elements of $\A$ are $\Lamo$-linear combinations of ``words'' $x_{\gamma_1} \odot ... \odot x_{\gamma_k}$ for $k \geq 0$, which we will write more briefly as $x_{\gamma_1}...x_{\gamma_k}$.
 In particular, note that there is the ``empty word'' $e = 1 \in \Lamo = \odot^0\X$, which serves as the multiplicative unit.

The contact homology algebra $\cha(Y)$ is by definition the CDGA over $\Lamo$ with underlying graded algebra $\A$,
with degree one differential 
$\bdy$ defined as follows.
Pick an arbitrary total ordering $\leqq$ of the good Reeb orbits of $(Y,\alpha)$.
For $l \geq 0$, let $\orbset_l(Y,\alpha)$ 
denote the set of $l$-tuples $\Gamma = (\gamma_1,...,\gamma_l)$ of good Reeb orbits such that 
$\gamma_1 \leqq ... \leqq \gamma_l$.
Equivalently, a tuple $(\gamma_1,...,\gamma_l) \in \orbset_l(Y,\alpha)$ can be written as $(\underbrace{\eta_1,...,\eta_1}_{i_1},...,\underbrace{\eta_m,...,\eta_m}_{i_m})$
with $\eta_1 \prec ... \prec \eta_m$ for some $i_1,...,i_m \geq 1$ such that $i_1+...+i_m = l$.\footnote{We could alternatively forego the total ordering at the cost of later dividing out by $m!$ in \eqref{eq:CHA_diff} below -- c.f. \cite[\S2.1]{EGH2000}.} 
Set $\orbset(Y,\alpha) = \bigcup\limits_{l \geq 0}\orbset_l(Y,\alpha)$.
Note that there is an empty tuple $\nil \in \orbset_0(Y,\alpha) \subset \orbset(Y,\alpha)$.
For $\Gamma \in \orbset(Y,\alpha)$, put $\mu_{\Gamma} := i_1!...i_m!$,
$\kappa_{\Gamma} := \kappa_{\eta_1}^{i_1}...\kappa_{\eta_m}^{i_m}$, $\cz(\Gamma) := \sum_{j=1}^l \cz(\gamma_j)$,
and $x_{\Gamma} = x_{\gamma_1}...x_{\gamma_l}$.

For $l \geq 0$, we define a map $\bdy_{l}: \X \rightarrow \odot^l \X$ by
\begin{align}\label{eq:CHA_diff}
\bdy_{l}(x_{\gamma}) = \sum_{\substack{\Gamma^- \in \orbset_l(Y,\alpha)\\ A \in H_2(Y,(\gamma)\cup\Gamma^-;\Z)}} \# \ovl{\calM}_{Y,A}((\gamma);\Gamma^-)/\R  \frac{T^{[d\alpha]\cdot A}}{\mu_{\Gamma^-}\kappa_{\Gamma^-}}x_{\Gamma^-}.
\end{align}
Here we define $\# \ovl{\calM}_{Y,A}((\gamma);\Gamma^-)/\R \in \Q$ to be zero unless it is expected to be a count of points,
i.e. unless $$\ind_{Y,A}((\gamma);\Gamma^-) = (n-3)(1-l)+\cz(\gamma) - \cz(\Gamma^-) + 2c_1(Y,\alpha)\cdot A = 1.$$
Note that by nonnegativity of energy we have $[d\alpha] \cdot A \geq 0$, i.e. $T^{[d\alpha] \cdot A} \in \Lamo$.
The differential for the contact homology algebra is then defined on generators by 
\begin{align*}
\bdy(x_\gamma) = \sum_{l =0}^\infty \bdy_{l}(x_{\gamma}),
\end{align*}
extended to more general words by the Leibniz rule
$$\bdy(x_{\gamma_1}...x_{\gamma_l}) = \sum_{i=1}^l (-1)^{|x_{\gamma_1}|+...|x_{\gamma_{i-1}}|}x_{\gamma_1}...x_{\gamma_{i-1}}\bdy(x_{\gamma_i})x_{\gamma_{i+1}}...x_{\gamma_l}.$$
Informally, the sum is simply over all rigid (up to target translations) curves in the symplectization of $Y$ with one positive puncture $\gamma$ and several (possibly none) unordered negative punctures.
Each term $1/i_j!$ in $\mu_{\Gamma^-}$ accounts for the superfluous ordering of the punctures asymptotic to $\gamma_j$,
and each factor $1/\kappa_{\gamma_i}$ in $\kappa_{\Gamma^-}$ is related to the fact that there are there is a $\kappa_{\gamma}$-fold redundancy when gluing at a pair of positive and negative punctures both asymptotic to $\gamma$.
Note that by exactness the sums involved are actually finite and hence would be well-defined even without using the Novikov ring.

The relation $\bdy^2 = 0$ comes from the structure of the boundary strata of the compactified moduli space $\ovl{\calM}_{Y,A}((\gamma);\Gamma^-)/\R$ for $\ind_{Y,A}((\gamma);\Gamma^-) = 2$.
Namely, each boundary stratum corresponds to a two-level pseudoholomorphic building in the symplectization of $(Y,\alpha)$.
Each level must have index $1$ and hence consists of a single connected curve of index $1$ together with some number of trivial cylinders.
The structure coefficient $\langle \bdy^2x_{\gamma}, x_{\gamma_1}...x_{\gamma_l}\rangle$
corresponds to the count of points in $\bdy \ovl{\calM}_Y((\gamma);\Gamma^-)/\R$, and hence is zero.

\begin{remark}[on gradings]\label{rmk:on_gradings}

In many cases it is possible and desirable to upgrade the $\Z/2$ grading on $\cha(Y)$ to say a $\Z$ grading.
Here we focus on the case that $(Y,\alpha)$ has a filling $(X,\omega)$ with $2c_1(X,\omega) = 0$ and $H_1(X;\Z) = 0$
(e.g. star-shaped domains).
In this situation, every Reeb orbit $\gamma$ of $(Y,\alpha)$ has a canonical integral Conley--Zehnder index $\cz(\gamma) \in \Z$.
If we were to follow the conventions of \cite{EGH2000}, we would replace $\X$ by $\calQ$, the $\Lamo$-module generated by a formal variable $q_{\gamma}$ for each good Reeb orbit, with the somewhat different grading $|q_{\gamma}| = \cz(\gamma) + n - 3$.
Defining $\cha(Y)$ in otherwise the same way, the resulting CDGA is $\Z$-graded, with differential of degree $-1$ (homological convention), product of degree $0$, and unit in degree $0$.

However, if we use $\calQ$, the $\Li$ structures on $\cha(Y)$ and $\chlin(X)$ do not easily conform to the grading framework of $\S\ref{sec:filtered_L-inf_algebras}$.
Instead, we will implement a different grading convention which is more naturally geared for discussing $\Li$ structures and also matches up with standard grading conventions for symplectic cohomology.
Our conventions for symplectic cochains $\sc^*(X)$ are that orbits $\gamma$ are graded by $n - \cz(\gamma)$, so that
the differential has degree $+1$ (cohomological convention), the pair of pants product has degree $0$ with unit in degree $0$, and for $M$ a closed spin manifold the Viterbo isomorphism reads
$SH^*(T^*M) \cong H_{n-*}(\calL M)$.
With this grading, the $k$th $\Li$ operation on $\sc^*$ has degree $3-2k$, i.e. it corresponds to an $\Li$ structure on $s\sc^*$ within the framework of \S\ref{sec:filtered_L-inf_algebras}.
With the same conventions, the $k$th $\Li$ operation on $\sc^*_{S^1}$ has degree $4-3k$, corresponding to an $\Li$ structure on $s^2\sc_{S^1}^*$.

We can modify our definition of $\cha(Y)$ as follows.
We define $\X$ as before, except that we now work over $\Lamo[\fv,\fv^{-1}]$, where $\fv$ is a formal variable of degree $6 - 2n$ (whose only role is to correct gradings),
and the differential is given by 
$$\bdy(x_\gamma) = \sum_{l=0}^\infty \bdy_l(x_\gamma) \fv^{l-1}.$$
This makes $\cha(Y)$ into a $\Z$-graded chain complex with differential of degree $+1$. 
Similarly, when discussing the $\Li$ structure on $\cha(Y)$, we can modify $\ell^k_l$ by including an additional factor $\fv^{l-1}$.
The result is a $\Z$-graded $\Li$ structure on $s^{-1}\A$.
As explained in \S\ref{subsubsec:L-inf_structure_on_chlin}, this also ``linearizes'' to an $\Li$ structure on $s^{-1}\chlin(X)$,
i.e. with underlying $\Lamo$-module $s^{-1}\X$.
Since the generator $s^{-1}x_{\gamma}$ of $s^{-1}\X$ has degree $|s^{-1}x_{\gamma}| = n -2 - \cz(\gamma)$,
this matches the grading on $s^2\sc^*_{S^1,+}$. 

Having a $\Z$ grading can be very useful for computations. At any rate, from now on we will mostly suppress $\fv$ and other $\Z$ grading considerations from the notation for simplicity. We also note that the formal variable $\fv$ is not be needed when discussing the $\Li$ structure on $\chlin(X)$, which involves curves with only one negative end.
\end{remark}

Next, suppose that $(X,\omega)$ is an {\em exact} symplectic cobordism between $(Y^+,\alpha^+)$ and $(Y^-,\alpha^-)$.
We get an associated unital CDGA homomorphism $\Phi: \cha(Y^+) \rightarrow \cha(Y^-)$ defined as follows.
Define a map $\Phi_{l}: \X_{Y^+} \rightarrow \odot^l \X_{Y^-}$ by
\begin{align*}
\Phi_{l}(x_{\gamma}) = \sum_{\substack{\Gamma^- \in \orbset_l(Y^-,\alpha^-)\\ A \in H_2(X,(\gamma)\cup\Gamma^-;\Z)}} \# \ovl{\calM}_{X,A}((\gamma);\Gamma^-)  \frac{T^{[\omega]\cdot A}}{\mu_{\Gamma^-}\kappa_{\Gamma^-}}x_{\Gamma^-}.
\end{align*}
In this case, a summand can only be nonzero if $\ind_{X,A}((\gamma);\Gamma^-) = 0$, and again by nonnegativity of energy we have $[\omega] \cdot A \geq 0$.
We then define
\begin{align*}
\Phi(x_{\gamma}) = \sum_{l=0}^\infty \Phi_{l}(x_\gamma),
\end{align*}
extended to more general words multiplicatively by 
$$\Phi(x_{\gamma_1}...x_{\gamma_k}) = \Phi(x_{\gamma_1})...\Phi(x_{\gamma_k}).$$

The fact that $\Phi$ is a chain map follows by looking at the boundary strata of $\ovl{\calM}_{X,A}((\gamma);\Gamma^-)$ for $\ind_{X,A}((\gamma);\Gamma^-) = 1$. Namely, each point of $\bdy\ovl{\calM}_{X,A}((\gamma);\Gamma^-)$ is a two-level pseudoholomorphic building with an index $1$ level in the symplectization of $Y$ and an index $0$ level in $X$, in either order, and these translate to $\bdy_{Y^-} \circ \Phi$ and $\Phi \circ \bdy_{Y^+}$ respectively.
Note here that the Novikov coefficients also decompose consistently along such boundary degenerations (see the discussion of energy in \S\ref{subsec:geometric_setup}).
As a special case, if $(X,\omega)$ is a symplectic filling of $(Y,\alpha)$, we get an augmentation of $\cha(Y)$, i.e. a CDGA homomorphism $\aug: \cha(Y) \rightarrow \Lamo$ sending $e$ to $1$.
Here $\Lamo$ is the $\Li$ algebra equipped with the trivial differential.
Strictly speaking, we should also shift the degree of $\Lamo$ in order for $\aug$ to have degree $0$, but we will mostly suppress this from the notation to avoid clutter (and similarly for the more general augmentations appearing in \S\ref{sec:geometric_constraints}).

\sss

The algebra $\A = S\X$ has an increasing ``word length'' filtration given by 
$\calF_{\leq m}\A = \oplus_{k = 0}^m \odot^k \X$.
The differential $\bdy$ typically does not respect this filtration
due to the presence of the term $\bdy_{0}$.
However, given an augmentation $\aug: \cha(Y) \rightarrow \Lamo$ we can perform an algebraic operation called ``linearization'' which replaces $\cha(Y)$ with an isomorphic CDGA whose differential does preserve the word length filtration. Namely, let $F^\aug: \A \rightarrow \A$ be the CDGA homomorphism defined on generators by $F^{\aug}(x_{\gamma}) = x_\gamma + \aug(x_\gamma)$
and extended multiplicatively, i.e.
\begin{align*}
F^\aug(x_{1}...x_{k}) &= (x_{1} + \aug(x_{1}))...(x_{k} + \aug(x_{k}))\\
&= \sum_{i=0}^k\sum_{\sigma \in \Sh(i,k-i)} \sign(\sigma)\aug(x_{{\sigma(1)}})...\aug(x_{{\sigma(i)}})x_{{\sigma(i+1)}}...x_{{\sigma(k)}}.
\end{align*}
Note that $F^\aug$ is invertible, with inverse $F^{\aug^{-1} }= F^{-\aug}$.
We define a new differential on $\A$ by $\bdy^\eps := F^\eps\circ \bdy \circ F^{\eps^{-1}}$.
Then $F^\aug$ defines a CDGA isomorphism $(\A,\bdy) \xrightarrow{\cong} (\A,\bdy_\eps)$,
and it trivializes the augmentation in the sense that $\aug\circ F^{\eps^{-1}} = 0$.
Moreover, letting $\pi_i: \calA \rightarrow \odot^i\X$ denote the projection map, we have $\pi_0 \circ \bdy_\aug = 0$.
In particular, this means that $\bdy^{\lin} := \pi_1\circ \bdy^\aug: \X \rightarrow \X$ squares to zero.
Given a symplectic filling $(X,\omega)$, the linearized contact homology $\chlin(X)$ is by definition
the chain complex $(\X,\bdy_\lin)$, with the linearization taken with respect to the filling-induced augmentation $\aug: \cha(Y) \rightarrow \Lamo$.

In fact, $\chlin(X)$ has the following more geometric interpretation.
The structure coefficient $\langle \bdy_\lin(x_{\gamma}),x_{\eta}\rangle$
counts rational curves in the symplectization of $Y$ with one positive end $\gamma$, one negative end $\eta$,
and $k \geq 0$ extra negative ends at Reeb orbits $\gamma^-_1,...,\gamma^-_k$,
each weighted by $\aug(\gamma^-_i)$, i.e. by the count of holomorphic planes in $X$ with positive end $\gamma^-_i$.
We refer to such a configuration as a cylinder in the symplectization of $Y$ with ``anchors'' in the filling $X$.
More generally, one can speak of anchored curves of a given topological type, wherein one allows the component in $\R \times Y$ to be possibly disconnected and to have any number of extra negative ends, provided that each extra negative end is matched with the positive end of a component in $X$ and the total topological type is as specified.

\begin{remark}
In some cases, one can ignore the anchors altogether and simply interpret $\chlin(X)$ as a count of cylinders in the symplectization of $Y$.
For example, if $2c_1(X^{2n},\omega) = 0$ and $(Y,\alpha)$ has no Reeb orbits with $\cz(\gamma) + n - 3 = 0$,
then anchors can be ignored since there are no index zero holomorphic planes in $X$.
In this case linearized contact homology is sometimes also called ``cylindrical contact homology''.
A similar remark applies to the $\Li$ structure on $\chlin(X)$ described below, provided that there are no index zero $k$-punctured holomorphic spheres in $X$ for $k \in \Z_{\geq 1}$.
\end{remark}

\subsubsection{The $\Li$ structure on the contact homology algebra}\label{subsubsec:Li_str_on_CHA}
The differential for the contact homology algebra
is actually the first term $\ell^1=\bdy$
in a sequence of operations $\ell^k: \odot^k\calA \rightarrow \calA$, $k \geq 1$, which fit together to form an $\Li$ algebra structure on $s^{-1}\A$.
Here the map $\ell^k: \odot^k\calA \rightarrow \calA$ is defined by counting rational curves with $k$ positive ends in the symplectization of $(Y,\alpha)$.
Firstly, we define a map $\odot^k\X \rightarrow \odot^l \X$ by 
\begin{align*}
\ell_l^k(x_{\gamma_1},...,x_{\gamma_k}) := \sum_{\substack{\Gamma^- \in \orbset_l(Y,\alpha)\\ A \in H_2(Y,\Gamma^+\cup\Gamma^-;\Z)}}
\# \ovl{\calM}_{Y,A}(\Gamma^+;\Gamma^-)/\R  \frac{T^{[d\alpha]\cdot A}}{\mu_{\Gamma^-} \kappa_{\Gamma^-}}x_{\Gamma^-},
\end{align*}
where we have set $\Gamma^+ = (\gamma_1,...,\gamma_k)$.
We then define $$\ell^k(x_{\gamma_1},...,x_{\gamma_k}) := \sum_{l=0}^\infty \ell_{l}^k(x_{\gamma_1},...,x_{\gamma_k}).$$
We extend the input to more general words by requiring $\ell^k$ to satisfy the Leibniz rule in each input, i.e. 
given $\Gamma_1^+,...,\Gamma_k^+ \in \orbset(Y,\alpha)$ with $\Gamma_i = \gamma_1,...,\gamma_m$ for some $i$, we have
\begin{align*}
& \ell^k(x_{\Gamma_1^+},...,x_{\Gamma_{i-1}^+},x_{\gamma_1}...x_{\gamma_m},x_{\Gamma_{i+1}^+},...,x_{\Gamma_k^+})
\\&:= \sum_{j=1}^m  (-1)^{|x_{\gamma_1}|...|x_{\gamma_{i-1}}|}x_{\gamma_1}...x_{\gamma_{j-1}}\ell^k(x_{\Gamma_1^+},...,x_{\Gamma_{i-1}^+},x_{\gamma_j},x_{\Gamma_{i+1}^+},...,x_{\Gamma_k^+})x_{\gamma_{j+1}}...x_{\gamma_m}.
\end{align*}

To see that the maps $\ell^1,\ell^2,\ell^3,...$ satisfy the $\Li$ relations, consider the compactified moduli space
$\ovl{\calM}_{Y,A}(\Gamma^+;\Gamma^-)/\R$ for $\ind_{Y,A}(\Gamma^+;\Gamma^-) = 2$.
Each boundary stratum corresponds to a two-level pseudoholomorphic building. Each level has index $1$ and hence consists of an index $1$ component together with some number of trivial cylinders, such that the total glued curve is connected. 
Then the count of points in $\bdy \ovl{\calM}_Y(\Gamma^+;\Gamma^-)/\R$ precisely corresponds to the structure coefficient $\langle (\wh{\ell})^2(x_{\gamma_1}\odot ... \odot x_{\gamma_k}),x_{\Gamma^-}\rangle$, and hence is zero. 
Since the operations $\ell^1,\ell^2,\ell^3,...$ are derivations in each input, this suffices to establishes the $\Li$ relations for all inputs.

\begin{remark}
In fact, the $\Li$ algebra $(\ovl{S}\A,\wh{\ell})$ has a much richer algebraic structure, since $\calA$ additionally has a commutative product which is compatible with the $\Li$ operations, making it into a special case of a $\mathcal{G}_\infty$ (homotopy Gerstenhaber) algebra. However, in this paper we do not attempt to make this precise or utilize this finer structure in any systematic way.
\end{remark}

Next, suppose that $(X,\omega)$ is an exact symplectic cobordism between $(Y^+,\alpha^+)$ and $(Y^-,\alpha^-)$.
We define an $\Li$ homomorphism from $\cha(Y^+)$ to $\cha(Y^-)$, consisting of maps $\Phi^k: \odot^k \calA_{Y^+} \rightarrow \calA_{Y^-}$, $k \geq 1$, as follows.
In general, given any two partitions $$I_1 \sqcup ... \sqcup I_a = J_1 \sqcup ... \sqcup J_b = \{1,...,N\},$$
let $G(I_1,...,I_a;J_1,...,J_b)$ denote the graph with $a+b+N$ vertices, labeled $A_1,...,A_a,B_1,...,B_b,v_1,...,v_N$, with
\begin{itemize}
\item an edge joining $A_i$ and $v_j$ if $j \in I_i$
\item an edge joining $B_i$ and $v_j$ if $j \in J_i$.
\end{itemize}
Now consider words $w_1,...,w_k \in \calA_{Y^+}$ of the form 
\begin{align*}
(w_1,...,w_k) = (\underbrace{x_{1}...x_{{i_1}}}_{i_1},\underbrace{x_{i_1+1},...,x_{i_2}}_{i_2},...,\underbrace{x_{{i_1+...+i_{k-1}+1}},...,x_{{i_1+...i_k}}}_{i_k}),
\end{align*}
and set $N := i_1 + \cdots + i_k$.
Put $I_1 = \{1,...,i_1\},...,I_k = \{i_1+...+i_{k-1}+1,...,i_1+...+i_k\}$, so that
the subsets $I_1,...,I_k \subset \{1,...,N\}$ define a partition $I_1 \sqcup ... \sqcup I_k = \{1,...,N\}$.
We define
 \begin{align}\label{eq:Phi_k}
&\Phi^k(w_1,...,w_k) \\:= &\sum_{\substack{J_1\sqcup ... \sqcup J_m = \{1,...,N\} \\ \Gamma_1^-,...,\Gamma_m^- \in \orbset(Y^-,\alpha^-)\nonumber\\ A_1,...,A_m\;:\; A_i \in H_2(X,\Gamma_i^+\cup\Gamma_i^-;\Z) }} \#\ovl{\calM}_{X,A_1}(\Gamma_1^+;\Gamma_1^-)...\#\ovl{\calM}_{X,A_m}(\Gamma_m^+;\Gamma_m^-) \frac{T^{[\omega]\cdot(A_1+...+A_m)}}{\mu_{\Gamma_1^-}...\mu_{\Gamma_m^-}\kappa_{\Gamma_1^-}...\kappa_{\Gamma_m^-}}x_{\Gamma_1^-}...x_{\Gamma_m^-},
\end{align}
where the word $\Gamma^+_i \in \orbset_{|J_i|}(Y^+,\alpha^+)$ correspond to $J_i$ for $i=1,...,m$,
and the sum is over all partitions $J_1 \sqcup ... \sqcup J_m = \{1,..,N\}$ with the property that the graph $G(I_1,...,I_k;J_1,...,J_m)$ is simply connected.
Informally, we can view each word $w_i$ as a cluster of Reeb orbits in $Y^+$ with a punctured sphere abstractly bounding their union from above, and $\Phi^k(w_1,...,w_k)$
is the sum of all ways of attaching a collection of rational curves in $X$ such that the total curve is connected and genus zero. See Figure \ref{cha_Phi} for a cartoon.

\begin{figure}[!tbp]
  \centering
  \begin{minipage}[b]{.9\textwidth}
\includegraphics[width=\textwidth]{./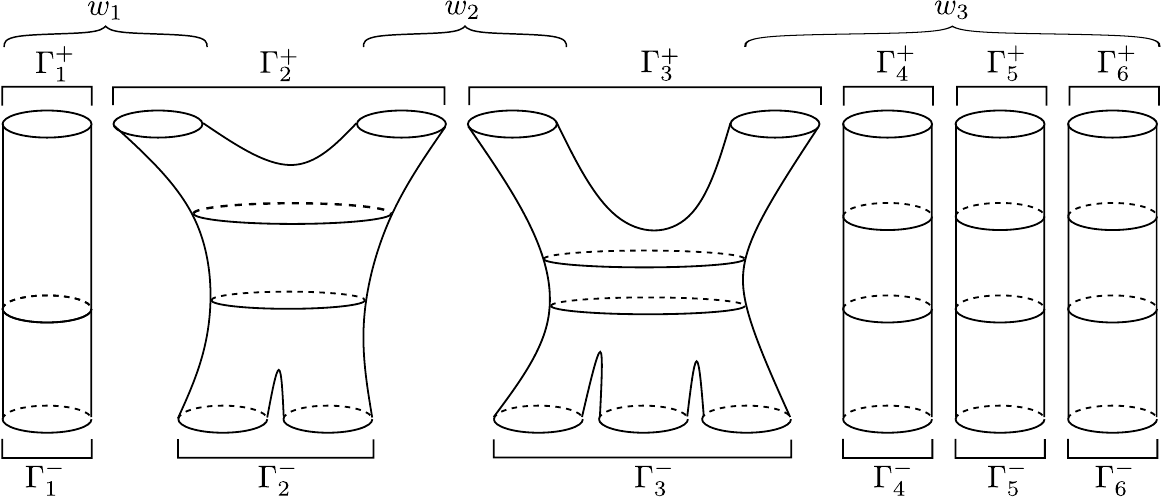}
  \end{minipage}
  \hfill
  \begin{minipage}[b]{.5\textwidth}
   \includegraphics[width=\textwidth]{./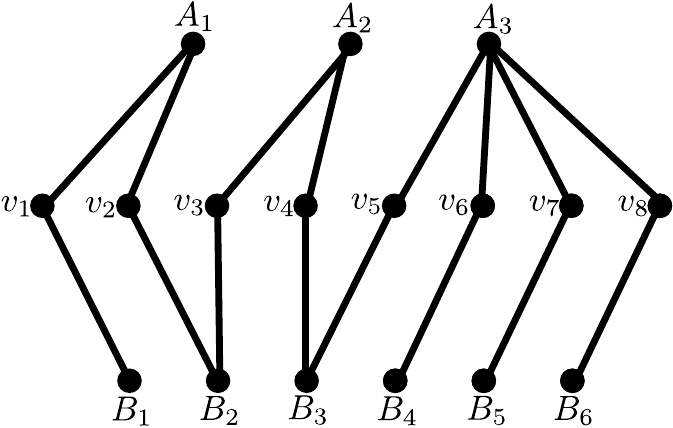}
  \end{minipage}
  \caption{Top: a collection of curves in $\wh{X}$ contributing to the coefficient $\langle \Phi^3(w_1,w_2,w_3),x_{\Gamma_1^-}...x_{\Gamma_6^-}\rangle$. Bottom: the corresponding graph $G(I_1,I_2,I_3;J_1,\dots,J_6)$, where $I_1 = \{1,2\}$, $I_2 = \{3,4\}$, $I_3 = \{5,6,7,8\}$, $J_1 = \{1\}$, $J_2 = \{2,3\}$, $J_3 = \{4,5\}$, $J_4 = \{6\}$, $J_5 = \{7\}$, $J_6 = \{8\}$ (note that in general the elements of $J_1,\dots,J_m$ need not appear in consecutive order).}
  \label{cha_Phi}
\end{figure}

To see that the maps $\Phi^1,\Phi^2,\Phi^3,...$ define an $\Li$ homomorphism, 
we look at the boundary of the compactified moduli space $\ovl{\calM}_{X}(\Gamma^+;\Gamma^-)$ for $\ind_{X,\omega}(\Gamma^+;\Gamma^-) = 1$, with $\Gamma^+ = (\gamma_1,...,\gamma_k)$.
It suffices to check 
the relation $\wh{\ell}_{Y^-}\circ \wh{\Phi} - \wh{\Phi}\circ \wh{\ell}_{Y^+} = 0$ on elements $x_{\gamma_1}\odot ... \odot x_{\gamma_k} \in \ovl{S}\calA_{Y^+}$,
with $\wh{\Phi}: \ovl{S}\calA_{Y^+} \rightarrow \ovl{S}\calA_{Y^-}$ the induced map on bar complexes.
This follows because the coefficient of $x_{\Gamma^-}$ in this relation precisely corresponds to the count of points in $\bdy\ovl{\calM}_X(\Gamma^+;\Gamma^-)$.

\sss

Given two different versions $\Phi,\Phi'$ of the cobordism map $\cha(Y^+) \rightarrow \cha(Y^-)$ defined by two different almost complex structures $J_0$ and $J_1$ (and other perturbation data) for the cobordism, there is a filtered $\Li$ homotopy relating $\Phi$ and $\Phi'$.
Indeed, we can pick a $1$-parameter family of almost complex structures $J_t$ joining $J_0$ and $J_1$, and consider a $t$-parametrized analogue of the map $\Phi$ defined by counting solutions to an appropriate $t$-parametrized moduli space of curves in the symplectic completion of $X$.
According to \cite[Thm 2.4.5]{EGH2000}, the resulting algebraic output is a homotopy between the rational SFT potentials associated to $J_0$ and $J_1$, formulated in terms of a certain Hamilton-Jacobi differential equation, and in our setup this gives rise to a filtered $\Li$ homotopy between filtered $\Li$ homomorphisms as in \S\ref{sec:filtered_L-inf_algebras}.

Suppose now that we have a symplectic cobordism $X = X_1 \circledcirc X_2$ which is the concatenation of exact symplectic cobordisms $X_1,X_2$ with common contact boundary $Y = \bdy^+X_1= \bdy^-X_2$. In this situation we have induced filtered $\Li$ homomorphisms
\begin{itemize}
  \item $\Phi_{X_2}: \cha(\bdy^+X_2) \ra \cha(Y)$
  \item $\Phi_{X_1}: \cha(Y) \ra \cha(\bdy^-X_1)$
  \item $\Phi_X: \cha(\bdy^+X_2) \ra \cha(\bdy^-X_1)$.
\end{itemize}
By a neck stretching analogue of the above homotopy (using the compactification described in \S\ref{subsec:SFT_comp_and_neck}), we get a filtered $\Li$ homotopy 
\begin{align}\label{eq:neck_stretch_comp}
\Phi_X \sim \Phi_{X_1} \circ \Phi_{X_2}
\end{align}
 as filtered $\Li$ homomorphisms $\cha(\bdy^+X_2) \ra \cha(\bdy^-X_1)$.

\sss

Observe that a priori $\cha(Y)$ depends on various choices, namely a contact one-form $\alpha$, an admissible almost complex structure $J$ on $\R \times Y$, and any additional perturbation data needed to achieve transversality.
We will sometimes write e.g. $\cha(Y,\alpha)$ if we wish to emphasize the role of the contact form.
However, it is possible to show that, up to filtered $\Li$ homotopy equivalence, $\cha(Y)$ is actually independent of the choice of almost complex structure $J$ on $\R \times Y$ and any other perturbation data, and also under replacing the contact one form $\alpha$ with another of the type $\alpha' = \alpha + df$ for a smooth function $f: Y \ra \R$.
Here it is instructive to observe that, for any small $\eps > 0$, we can find an exact compact symplectic cobordism $[0,\eps] \times Y$ with negative contact boundary $(Y,\alpha)$ and positive contact boundary $(Y,e^\eps \alpha')$ (if $\alpha' = \alpha$ we take the Liouville one-form to be simply $e^s \alpha$).
This induces an $\Li$ homomorphism $\cha(Y,\alpha') \ra \cha(Y,\alpha)$, where we may choose different almost complex structures and perturbation data for the domain and target, and which preserves the filtrations up small distortions (coming from identifying $\cha(Y,e^\eps \alpha')$ with $\cha(Y,\alpha')$).
We can similarly define a map in the other direction, $\cha(Y,\alpha) \ra \cha(Y,\alpha')$, and by the discussion in the previous paragraph the compositions in either order are, up to small distortions, filtered $\Li$ homotopic to the identity maps. 
By taking more care with energy considerations one can remove these small distortions to give genuine filtered $\Li$ homotopy equivalences.

Incidentally we will not need this in order to establish invariance properties of the capacities defined in the sequel, since as in Remark~\ref{rmk:caps_are_symp_invt} it suffices to show monotonicity under (exact) symplectic embeddings, which will follow directly from versions of the neck stretching composition ~\eqref{eq:neck_stretch_comp} (see \S\ref{subsec:properties}).
Indeed, suppose that $\mathfrak{c}$ is a real-valued invariant of symplectic fillings which is monotone under exact symplectic embeddings and scales like area, but which a priori depends on various auxiliary choices.
By the above we must have $\mathfrak{c}(X,\omega,\alpha,J) < e^\eps \mathfrak{c}(X,\omega,\alpha',J') < e^{2\eps}\mathfrak{c}(X,\omega,\alpha,J)$, and hence $\mathfrak{c}(X,\omega,\alpha,J) = \mathfrak{c}(X,\omega,\alpha',J')$ since $\eps$ is arbitrary.
If $\mathfrak{c}$ is moreover monotone under arbitrary symplectic embeddings, then $\mathfrak{c}(X,\omega,\alpha)$ is entirely independent of the choice of the contact one-form $\alpha$, i.e. it is a symplectomorphism invariant.

\subsubsection{The $\Li$ structure on linearized contact homology}\label{subsubsec:L-inf_structure_on_chlin}

Given a symplectic filling $(X,\omega)$, we can also linearize the $\Li$ structure on $\cha(X)$ to get a much smaller $\Li$ structure whose underlying chain complex is $\chlin(X)$.
More precisely, we define operations $\ell^k_\lin: \odot^k\X \rightarrow \X$
which together form an $\Li$ algebra structure on $s^{-1}\X$, and such that 
$\ell^1_\lin = \bdy_\lin$ is the linearized contact homology differential.

As a special case of the cobordism map $\Phi$ from \S\ref{subsubsec:Li_str_on_CHA}, the filling $X$ induces
an $\Li$ augmentation $\aug: \cha(Y) \rightarrow \Lamo$.
Let $w_1,...,w_k \in \calA_{Y}$ be words as above.
We define an $\Li$ homomorphism $F^\aug: \calA \rightarrow \calA$ by
\begin{align}\label{eq:Faug}
F^{\aug;k}(w_1,...,w_k) \\= &\sum_{\substack{J_1\sqcup ... \sqcup J_m \sqcup K = \{1,...,N\} \\ A_1,...,A_m\;:\; A_i \in H_2(X,\Gamma_i^+\cup \nil;\Z) }} \#\ovl{\calM}_{X,A_1}(\Gamma_1^+;\nil)...\#\ovl{\calM}_{X,A_m}(\Gamma_m^+;\nil) T^{[\omega]\cdot(A_1+...+A_m)}x_{\Gamma_K}, \nonumber
\end{align}
where $\Gamma_i^+ \in \orbset_{|J_i|}(Y,\alpha)$ corresponds to $J_i$ for $i = 1,...,m$, $ \Gamma_K \in \orbset_{|K|}(Y,\alpha)$ corresponds to $K$, and the sum is over all partitions $J_1 \sqcup ... \sqcup J_m \sqcup K = \{1,...,N\}$ with the property that the graph $G(I_1,...,I_k;J_1,...,J_m)$ is simply connected.
Note that the map $F^{\aug;1}$ agrees with the automorphism of $\calA$ defined in \S\ref{subsubsec:cha_and_its_linearization} (and denoted by $F^{\aug}$ there).
Moreover, the $\Li$ homomorphism $F^\aug$ is invertible,
with inverse $F^{-\aug}$.
We can therefore define a new $\Li$ structure on $s^{-1}\calA$ 
via 
\begin{align}\label{eq:whellaug}
\wh{\ell}^\aug := \wh{F^\aug} \circ \wh{\ell} \circ \wh{F^{-\aug}}.
\end{align}
This corresponds to defining new $\Li$ operations $\ell^{\aug;1},\ell^{\aug;2},\ell^{\aug;3},...$, which have the property that
$\pi_0 \circ \ell_\aug^k = 0$ for each $k$.
Therefore, we can restrict the operations $\ell^{\aug;k}$ to inputs from $\X$ and post-compose with the projection $\pi_1$ to get $\Li$ operations $\ell^{k}_{\lin}: \odot^k \X \rightarrow \X$ for $k=1,2,3,...$. 
In particular, the differential $\ell^{1}_{\lin}$ agrees with the linearized contact homology differential $\bdy_{\lin}$ from before.

As in the case of the linearized contact homology differential, 
we can interpret the $\Li$ operations on $\chlin$ more geometrically as follows.
The structure coefficient $\langle \ell^k_\lin(x_{\gamma_1},...,x_{\gamma_k}),x_\eta\rangle$ counts rational curves in the symplectization of $Y$ with $k$ positive ends $\gamma_1,...,\gamma_k$ and $1$ negative end $\eta$, plus some number of additional anchors in $X$. Unlike the case of $\bdy_\lin$, the symplectization level can actually be disconnected,
with one nontrivial component and some number of trivial cylinders, and the anchors in $X$ can have multiple positive ends. See Figure \ref{chlin_Linf} for a cartoon.
\begin{figure}
 \centering
  \includegraphics[scale=.6]{./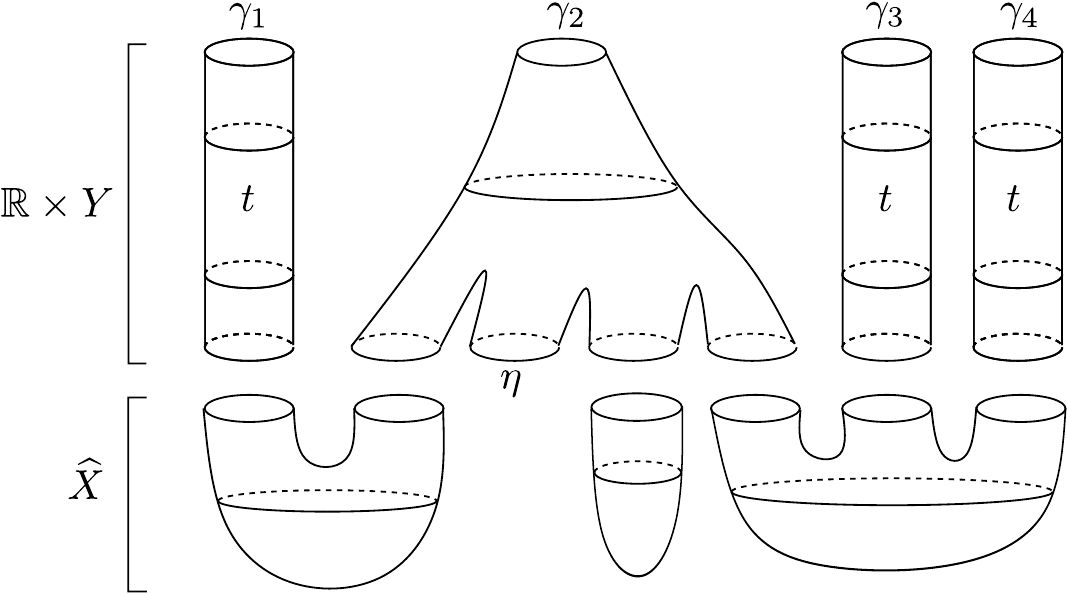}
 \caption{A typical configuration contributing to the coefficient $\langle \ell^4_{\lin}(x_{\gamma_1},x_{\gamma_2},x_{\gamma_3},x_{\gamma_4}),x_{\eta}\rangle$. Here the cylinders labeled by $t$ are trivial cylinders.}
 \label{chlin_Linf}
\end{figure}

Suppose that  $(X_0,\omega_0)$ and $(X_1,\omega_1)$ are symplectic fillings, with a symplectic embedding $X_0 \hookrightarrow X_1$ such that $X_1 \setminus X_0$ is an exact symplectic cobordism.
By linearizing the $\Li$ cobordism map $\Phi: \cha(\bdy^+ X_1) \rightarrow \cha(\bdy^+ X_0)$, we get an $\Li$ homomorphism
$$ \Phi_{\lin}: \chlin(X_1) \rightarrow \chlin(X_0),$$
well-defined up to filtered $\Li$ homotopy.
Also, analogous to the neck stretching discussion at the end of \S\ref{subsubsec:Li_str_on_CHA}, given a symplectic filling $W$ and exact symplectic cobordisms $X_1,X_2$ with $\bdy^+ W = \bdy^- X_1$ and $\bdy^+X_1 = \bdy^-X_2$, the fillings given by concatenating $W,X_1,X_2$ induce filtered $\Li$ homomorphisms 
\begin{itemize}
  \item $\Phi_{X_2,\lin}: \chlin(W \circledcirc X_1 \circledcirc X_2) \ra \chlin(W \circledcirc X_1)$
  \item $\Phi_{X_1,\lin}: \chlin(W \circledcirc X_1) \ra \chlin(W)$
  \item $\Phi_{X_1 \circledcirc X_2,\lin}: \chlin(W \circledcirc X_1 \circledcirc X_2) \ra \chlin(W)$, 
\end{itemize}
and there is a filtered $\Li$ homotopy
\begin{align*}
\Phi_{X_1 \circledcirc X_2,\lin} \sim \Phi_{X_1,\lin}\circ \Phi_{X_2,\lin}: \chlin(W \circledcirc X_1 \circledcirc X_2)
 \ra \chlin(W). 
\end{align*}
See also \cite[\S2.7]{latschev2022remarks} for another perspective on the structure of linearized RSFT.

\section{The Cieliebak--Latschev element}\label{sec:CL}

A {\em non-exact} symplectic cobordism $(X,\omega)$ between $(Y^+,\alpha^+)$ and $(Y^-,\alpha^-)$ does {\em not} generally induce chain maps 
\begin{align*}
\cha(Y^+) \rightarrow \cha(Y^-),\\
\chlin(Y^+) \rightarrow \chlin(Y^-),
\end{align*}
let alone $\Li$ homomorphisms.
The basic issue is that an index $1$ curve in $X$ can break into
a configuration involving ``reverse anchors'', i.e. rational curves in $X$ with only negative ends. 
These reverse anchors, which pose an obstruction to defining a chain map, are ruled out for exact symplectic cobordisms by Stokes' theorem.
Nevertheless, inspired by \cite{fukaya2006application}, Cieliebak--Latschev observed that we can still define transfer maps, provided we appropriately deform the target.
In fact, the reverse anchors assemble to define a Maurer--Cartan element $\cl$ in $\cha(Y^-)$, and there is an $\Li$ homomorphism
$$\Phi: \cha(Y^+) \rightarrow \cha_{\cl}(Y^-),$$ where the latter denotes the $\Li$ structure with deformed operations $\ell^1_{\cl},\ell^2_\cl,\ell^3_\cl,...$ as in \S\ref{subsec:MC_theory}.

More precisely, we define $\cl \in \cha(Y^-)$ by
\begin{align*}
\cl := \sum_{\substack{\Gamma^- \in \orbset(Y^-,\alpha^-)\\ A \in H_2(X,\nil \cup \Gamma^-;\Z)}} \# \ovl{\calM}_{X,A}(\nil;\Gamma^-) \frac{T^{[\omega]\cdot A}}{\mu_{\Gamma^-} \kappa_{\Gamma^-}}x_{\Gamma^-}.
\end{align*}
Recall that by definition $\cha(Y^-)$ is $T$-adically complete, and hence the above infinite sum is well-defined since by the SFT compactness theorem there can only be finitely many nonzero terms with bounded $T$ exponent.

To see that this is a Maurer--Cartan element, one analyzes the boundary strata of the spaces $\ovl{\calM}_{X,A}(\nil;\Gamma^-)$ in the case that $\ind_{X,A}(\nil;\Gamma^-) = 1$.
Namely, each boundary stratum is a two-level building, with an index $0$ level in $X$ and an index $1$ level in the symplectization of $Y$, and this contributes to a term of the form $\ell^k(\cl,...,\cl)$. 
A similar analysis, looking at moduli spaces of index $1$ curves in $X$, confirms that
$\Phi$ is an $\Li$ homomorphism from $\cha(Y^+)$ to $\cha_{\cl}(Y^-)$.
Arguing as in \S\ref{subsubsec:cha_and_its_linearization}, one can also show that $\cl$ is independent of any choices up to gauge equivalence.

\begin{remark}
We can view a symplectic filling $(X,\omega)$ of $(Y,\alpha)$ as a cobordism with negative end the empty set and trivial Cieliebak--Latschev element.
In particular, in this case the $\Li$ augmentation $\aug: \cha(Y) \rightarrow \Lamo$ is defined without any deformation.
\end{remark}

Now suppose that $(X_0,\omega_0)$ is a symplectic filling of $(Y^-,\alpha^-)$, and let $(X,\omega)$ be a (possibly non-exact) symplectic cobordism with positive end $(Y^+,\alpha^+)$ and negative end $(Y^-,\alpha^-)$.
In this situation we have a linearized version of $\cl \in \cha(Y^-)$, which is a Maurer--Cartan element $\cllin$ for the $\Li$ structure on $\chlin(X_0)$.
Namely, we push forward $\cl$ under the $\Li$ homomorphism $F^\aug: \cha(Y^-) \rightarrow \cha(Y^-)$,
and then set $$\cllin := \pi_1(F^\aug_*(\cl)) \in \chlin(X_0).$$
Let $\op{CH}_{\op{lin,\cllin}}(X_0)$ denote the $\Li$ structure on $\chlin(X_0)$ deformed by $\cllin$.
We can also linearize $\Phi$ to define an $\Li$ homomorphism
$\Phi_\lin: \chlin(X_0 \circledcirc X) \rightarrow \ch_{\lin,\cllin}(X_0)$.
See Figure \ref{Phi_lin_cl_and_cl_lin}.
\begin{figure}[!tbp]
  \centering
  \begin{minipage}[b]{0.48\textwidth}
\includegraphics[width=\textwidth]{./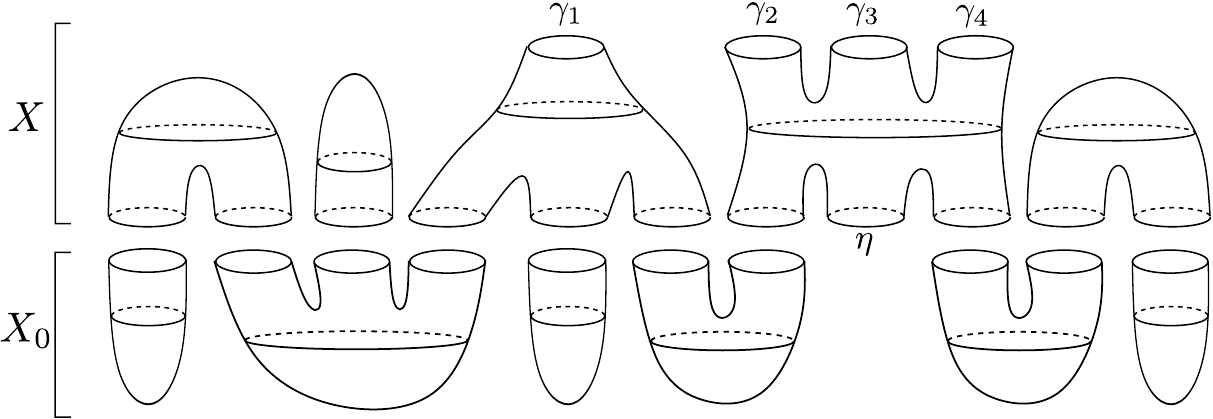}
  \end{minipage}
  \hfill
  \begin{minipage}[b]{0.48\textwidth}
   \includegraphics[width=\textwidth]{./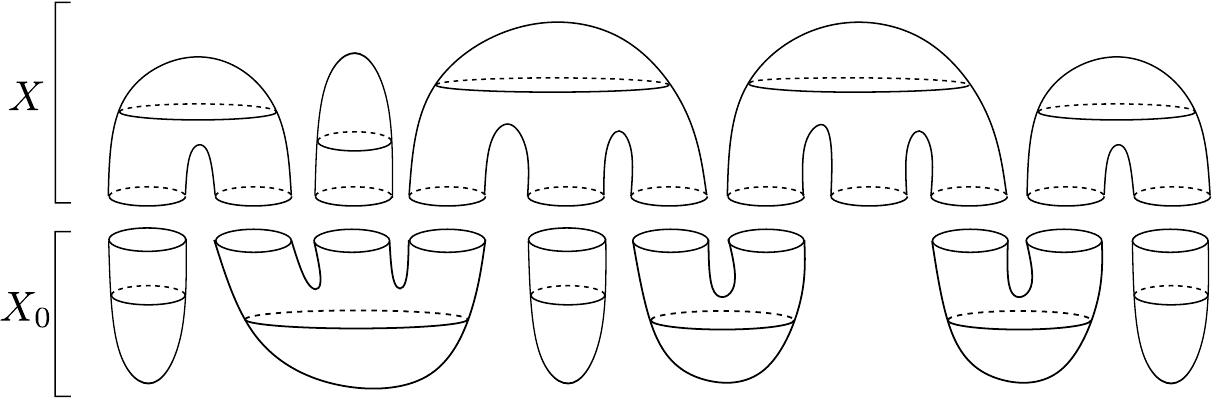}
  \end{minipage}
  \caption{Left: a typical configuration contributing to the coefficient $\langle \Phi^4_{\lin}(x_{\gamma_1},x_{\gamma_2},x_{\gamma_3},x_{\gamma_4}),x_{\eta}\rangle$, in the case of a non-exact symplectic cobordism $X$. Right: a typical configuration contributing to the Maurer--Cartan element $\cllin \in \chlin(X_0)$.}
 \label{Phi_lin_cl_and_cl_lin}
\end{figure}

\sss

Now consider the case that $(M,\beta) = (X^-,\omega^-) \circledcirc (X^+,\omega^+)$ 
is a closed symplectic manifold which decomposes into a symplectic filling $(X^-,\omega^-)$ and a symplectic cap $(X^+,\omega^+)$ with common contact boundary $(Y,\alpha)$. 
For simplicity, assume there are no rigid closed curves in $(X^-,\omega^-)$ or $(X^+,\omega^+)$ individually.
We have the Cieliebak--Latschev element $\cl \in \cha(Y)$,
and the augmentation $\aug: \cha(Y) \rightarrow \Lamo$ induced by the filling $X^-$.
We can also consider the genus zero Gromov--Witten invariant
$$\gw_M = \sum_{A \in H_2(M;\Z)}T^{[\omega] \cdot A} \gw_{M,A} \in \Lamo,$$
where for a homology class $A \in H_2(M;\Z)$ we put $\gw_{M,A} = \#\ovl{\calM}_{M,A}\in \Q$.
In this situation, by analyzing what happens when we neck stretch closed curves in $M$ along the contact-type hypersurface $Y$, we find the following relation:
$$\aug_*(\cl) = \gw_{M},$$
where $\aug_*(\cl) \in \Lamo$ is defined as in \S\ref{subsec:MC_theory} by 
$$\aug_*(\cl) = \sum_{k =1}^{\infty} \frac{1}{k!}\aug^k(\underbrace{\cl,...,\cl}_k) = \pi_1 \wh{\aug}(\exp(\cl)).$$
Similarly, we have 
\begin{align}\label{eq;auglin_cllin_gw}
(\auglin)_*(\cllin) = \gw_M.
\end{align}
In the next section we define various generalizations of the $\Li$ augmentation $\aug$ which count curves with additional geometric constraints, and the corresponding analogues of \eqref{eq;auglin_cllin_gw} will be the key ingredient used in \S\ref{subsubsec:gw_upper_bound} to give upper bounds on capacities.

\sss

In some cases we can make a more refined statement at the level of individual homology classes.
For example, suppose that $X^-$ is a symplectic filling of $Y$ such that
$H_1(Y;\Z) = H_2(Y;\Z) = H_2(X^-) = 0$ (e.g. $X^-$ is a star-shaped domain).
Let $X^+$ be a cap of $Y$, and set $M = X^- \circledcirc X^+$.
In this situation, each homology group $H_2(X^+;\nil\cup\Gamma^-;\Z)$
is naturally identified with $H_2(X^+;\Z)$, and the inclusion $H_2(X^+;\Z) \rightarrow H_2(M;\Z)$ is an isomorphism. 
Under these identifications, curves in $X^+$ define classes in $H_2(M;\Z)$,
and we have
$$\wh{\ell}(\exp(\cl)_A) = 0,$$
where $\exp(\cl)_A$ denotes the component of $\exp(\cl)$ in the class $A \in H_2(M;\Z)$.
Moreover, we have the relation 
$$\aug_*(\exp(\cl)_A) = T^{[\omega]\cdot A}\gw_{M,A},$$
and similarly for the other relations discussed above.

\section{Geometric constraints and augmentations}\label{sec:geometric_constraints}

Let $(Y,\alpha)$ be a strict contact manifold, with symplectic filling $(X,\omega)$. As discussed in \S\ref{subsec:rsft_formalism}, the filling induces an $\Li$ augmentation 
$\aug: \cha(Y) \rightarrow \Lamo$,
which is used to define the linearized $\Li$ algebra $\chlin(X)$.
Recall that the augmentation $\aug$ counts index $0$, (possibly disconnected) rational curves in $X$.
In this section, we define various other $\Li$ augmentations $\cha(Y) \rightarrow \Lamo$.
These can be interpreted as counting rational curves in $X$ which become rigid after imposing additional geometric constraints:
\begin{itemize}
\item
requiring curves to pass through $r$ generic points $p_1,...p_r \in X$
\item
requiring curves to pass through a chosen point in $p \in X$ with an order $m$ local tangency constraint at that point
\item requiring curves to have an order $b$ self-contact singularity at a chosen point $p \in X$ (e.g. $b=2$ corresponds to having a node at $p$),
\end{itemize}
as well as hybrids of these conditions.
By counting curves in $X$ which satisfy these extra constraints, we get corresponding $\Li$ augmentations $\cha(Y) \rightarrow \Lamo$,
denoted by $\aug\lll p_1,...,p_r\rrr$, $\aug\lll \T^{m}p\rrr$, and $\aug\lll \underbrace{p,...,p}_b\rrr$ respectively.
The first type of constraint is a standard part of symplectic field theory and is described (in slightly different language) in \cite{EGH2000}. The second type of constraint is based on the setup of Cieliebak--Mohnke \cite{CieliebakMohnkeInventiones}.
The local tangency constraints we consider are closely related to but subtly different from a gravitational descendant condition at a point (see Remark \ref{rmk:tangency_versus_descendant} for more details).
The third type of constraint is most naturally formulated in terms of curves in a multiple blowup of $X$ in a specified homology class.
In \S\ref{subsec:extra_negative_ends}, we combine all of these constraints into a unified framework by considering curves in a symplectic cobordism with extra negative ends.
The basic idea is to count curves with specified ends after removing 
small neighborhoods of points $p_1,...,p_r$ from $X$.
This allows us to combine constraints into general augmentations of the form $\aug\lll (\T^{m^1_1}p_1,...,\T^{m^1_{b_1}}p_1) ,..., (\T^{m^r_1}p_r,...,\T^{m^r_{b_r}}p_r) \rrr$, which can all be defined within a standard SFT framework.
By plugging these augmentations in the formalism of \S\ref{subsec:the_general_construction}, these give rise to symplectic capacities. 
In \S\ref{subsec:ppt}, we discuss a relation between the three types of constraints, based on the fact that constraints at different points in the target space can be ``pushed together'' into constraints at a single point.

\begin{remark}
The above constraints can also be posed for closed curves in say a closed symplectic manifold $(M,\beta)$, giving rise to Gromov--Witten-type invariants rather than $\Li$ augmentations. 
We denote the corresponding curve counts in homology class $A \in H_2(M;\Z)$ by $$\gw_{M,A}\lll (\T^{m^1_1}p_1,...,\T^{m^1_{b_1}}p_1) ,..., (\T^{m^r_1}p_r,...,\T^{m^r_{b_r}}p_r) \rrr \in \Q.$$ 
\end{remark}

\sss

Most of the above $\Li$ augmentations are {\em not} stable under products with $\C$. Indeed, each constraint $\T^{m^i_j}p_k$ prescribes a negative end of a curve, whereas we only expect curves with one negative end to stabilize (c.f. \eqref{eq:stab_ind_intro}).
The exception is the $\Li$ augmentation $\aug\lll \T^m p\rrr$, which is formulated in terms of a single negative end.
As such, it linearizes to an augmentation
$$ \auglin\lll \T^m p \rrr: \chlin(X) \rightarrow \Lamo$$
which is dimensionally stable.
The details are discussed in \S\ref{subsec:stabilization}.

\subsection{Point constraints}

Let $(X^{2n},\omega)$ be a symplectic cobordism between $(Y^+,\alpha^+)$ and $(Y^-,\alpha^-)$.
The induced $\Li$ cobordism map $\Phi: \cha(Y^+) \rightarrow \cha(Y^-)$
is defined in terms of index $0$ moduli spaces $\ovl{\calM}_{X,A}(\Gamma^+;\Gamma^-)$.
We can also consider the versions $\ovl{\calM}_{X,A,r}(\Gamma^+;\Gamma^-)$ with an additional $r$ (ordered) marked points freely varying in the domain.
These moduli spaces come with evaluation maps $\ev: \ovl{\calM}_{X,A,r}(\Gamma^+;\Gamma^-) \rightarrow X^{\times r}$.
Fixing pairwise distinct points $p_1,...,p_r \in X$, we define the moduli space of curves with $r$ point constraints by 
$$\ovl{\calM}_{X,A}(\Gamma^+;\Gamma^-)\lll p_1,...,p_r\rrr := \ev^{-1}(\{p_1\} \times ... \times \{p_r\}).$$ 
Note that each point constraint cuts down the expected dimension by $2n-2$,
so $\ovl{\calM}_{X,A}(\Gamma^+;\Gamma^-)\lll p_1,...,p_r\rrr$ having index $0$ corresponds to $\ovl{\calM}_{X,A,r}(\Gamma^+;\Gamma^-)$ having 
index $r(2n-2)$. 
The combinatorial structure of the boundary strata of $\ovl{\calM}_{X,A}(\Gamma^+;\Gamma^-)\lll p_1,...,p_r\rrr$
is such that a typical element of $\bdy\ovl{\calM}_{X,A}(\Gamma^+;\Gamma^-)\lll p_1,...,p_r\rrr$ consists of a pseudoholomorphic building with one ``main'' level in $X$ and some number of levels in the symplectization $\R \times Y^\pm$.
Note that the curves in the main level may be disconnected, and the point constraints $p_1,...,p_r$ are distributed arbitrarily between the different components in $X$.

We define the point-constrained $\Li$ cobordism map $\Phi\lll p_1,...,p_r\rrr: \cha(Y^+) \rightarrow \cha(Y^-)$ 
in the same way that we defined the $\Li$ cobordism map $\Phi: \cha(Y^+) \rightarrow \cha(Y^-)$, but now imposing the point constraints $p_1,...,p_r$.
This can also be seen as a special case of the blowup $\Li$ homomorphism discussed in \S\ref{subsec:blowup} below, where the extension of \eqref{eq:Phi_k} is given by \eqref{eq:Phi_k_blowup}.
In particular, if $(X,\omega)$ is a symplectic filling, we get an $\Li$ augmentation
$$\aug\lll p_1,...,p_r \rrr: \cha(Y) \rightarrow \Lamo.$$
As in \S\ref{subsec:rsft_formalism}, it follows from the general invariance properties of rational symplectic field theory that $\aug\lll p_1,...,p_r\rrr$ is independent of all choices, including the precise locations of the points $p_1,...,p_r\in X$ (assuming that $X$ is connected), up to filtered $\Li$ homotopy.

\begin{remark}
More generally, instead of requiring curves to pass through points $p_1,...,p_r$, we could require them to pass through $r$ chosen cycles in $X$. 
However, in this paper we will only make use of the point class, since this is the only homology class relevant to all symplectic fillings regardless of their topological type.
\end{remark}

\begin{remark}\label{rmk:another_defn_of_point_aug}
There is another $\Li$ augmentation $\cha(Y) \rightarrow \Lamo$ which is very similar to $\aug\lll p_1,...,p_r\rrr$ but does not reference the filling $(X,\omega)$.
It is defined in the same way except that we pick the points $p_1,...,p_r$ to lie in $\R \times Y$ rather than $X$, and we count curves in $\R \times Y$ rather than in $X$.
These curves still have no negative ends, and here we think of $\R \times Y$ as a symplectic cobordism rather than a symplectization (i.e. we do not quotient by translations in the target).
In some cases these two $\Li$ augmentations coincide (up to filtered $\Li$ homotopy). Namely, if for every $\Gamma^+ \in \orbset(Y,\alpha)$ we have
$\#\ovl{\calM}_X(\Gamma^+;\nil) = 0$, then by a neck stretching argument
the relevant curves in $\wh{X}$ are in correspondence with curves in $\R \times Y$.

For example, this holds for ellipsoids $E(a_1,...,a_n)$ for index reasons.
More generally, if we have $c_1(X,\omega) = 0$ and every Reeb orbit $\gamma$ of $Y$ satisfies $\cz(\gamma) > |n-3|$, then for all $\Gamma^+ = (\gamma_1^+,...,\gamma_k^+) \in \orbset(Y,\alpha)$ we have $\ind\; \ovl{\calM}_X(\Gamma^+;\nil) > 0$, so there are no rigid genus zero anchors in $X$.
Indeed, we have
$$\ind\; \ovl{\calM}_X(\Gamma^+;\nil) = (n-3)(2-k) + \sum_{i=1}^k \cz(\gamma_i^+) > (n-3)(2-k) + k|n-3| \geq 0.$$
\end{remark}

For $r=1$, the $\Li$ augmentation $\aug\lll p \rrr: \cha(Y) \rightarrow \Lamo$ induces
a linearized $\Li$ augmentation $\aug_\lin\lll p \rrr: \chlin(X) \rightarrow \Lamo$.
As mentioned in the introduction, its first component $\aug_\lin^1\lll p \rrr: \chlin(X) \rightarrow \Lamo$ is part of the SFT model for symplectic cohomology considered in \cite{bourgeois2009exact}.
For $r \geq 2$ the $\Li$ augmentation $\aug\lll p_1,...,p_r \rrr: \cha(Y) \rightarrow \Lamo$ does {\em not} induce an $\Li$ augmentation $\chlin(X) \rightarrow \Lamo$. Indeed, even the chain map $\aug^1\lll p_1,...,p_r \rrr: \cha(Y) \rightarrow \Lamo$ does not induce a chain map $\chlin(X) \rightarrow \Lamo$.
The essential reason is that an index $1$ curve in $X$ with two point constraints can degenerate into a configuration for which the marked points lie on different curve components in $X$.

\subsection{Local tangency constraints}\label{subsec:local_tan_con}

Following Cieliebak--Mohnke \cite{CieliebakMohnkeInventiones,CieliebakMohnkeHypersurfaces}, we can consider curves in a symplectic cobordism $(X,\omega)$ satisfying certain local tangency constraints. 
Let $J$ be an admissible almost complex structure on $\wh{X}$ which is integrable near a chosen point $p \in X$,
and let $D$ be a germ of complex divisor in $\Op(p)$.
Suppose we have a $J$-holomorphic map $u: \Sigma \rightarrow \wh{X}$
and a marked point $z \in \Sigma$ such that $u(z) = p$.
In this situation there is a well-defined notion of $u$ being tangent to $D$ of order $m$ at $z$, 
which one can formulate in local coordinates and then show to be coordinate-independent (see \cite{CieliebakMohnkeInventiones} for details).
Let $\calM_{X,A}(\Gamma^+;\Gamma^-)\lll \T^m p\rrr$ be defined in the same way as $\calM_{X,A}(\Gamma^+,\Gamma^-)$, but now with the constituent curves having one marked point $z$ which maps to $p$ and is tangent to $D$ of order $m$ at $z$.
It is shown in \cite[Prop 3.1]{CieliebakMohnkeInventiones} that for generic $J$ (which can be assumed to be fixed and integrable near $p$), the moduli space $\calM_{X,A}(\Gamma^+;\Gamma^-)\lll \T^m p\rrr$ is, at least near a somewhere injective curve $u$, a smooth manifold of dimension
$$(n-3)(2 - s^+ - s^-) + \sum_{i=1}^{s^+}\cz(\gamma_i^+) -  \sum_{j=1}^{s^-}\cz(\gamma_j^-) + 2c_1(X,\omega) \cdot A_u - (2n-2) - 2m.$$
Note that the $\T^m p$ constraint is a real codimension $2n-2 + 2m$ condition.
If we fix distinct points $p_1,...,p_r \in X$ and a divisor germ $D_i$ at each $p_i$,
for $m_1,...,m_r \in \Z_{\geq 0}$ we can similarly define the moduli space
$\calM_{X,A}(\Gamma^+,\Gamma^-)\lll \T^{m_1}p_1,...,\T^{m_r}p_r\rrr$
by considering curves with marked points $z_1,...,z_r$, with $z_i$ mapping to $p_i$ and tangent to $D_i$ of order $m_i$.

Now suppose that $(X,\omega)$ is a symplectic filling of $(Y,\alpha)$. By counting index $0$ curves in compactified moduli spaces $\ovl{\calM}_{X,A}(\Gamma^+,\nil)\lll \T^{m_1}p_1,...,\T^{m_r}p_r\rrr$, we get 
an $\Li$ augmentation
$\aug\lll \T^{m_1}p_1,..., \T^{m_r}p_r\rrr: \cha(Y) \rightarrow \Lamo$.
In the special case $r = 1$, we also have an induced linearized $\Li$ augmentation
$\aug_\lin\lll \T^m p\rrr: \chlin(X) \rightarrow \Lamo$.

\begin{remark}
In a closed symplectic manifold, the analogous counts of closed curve with tangency constraints are considered in \cite{enumerating}. It is shown in that setting that these give enumerative invariants which can be defined for all semipositive symplectic manifolds using only classical transversality techniques.
\end{remark}

 \begin{remark}[on the relationship between tangency constraints and gravitational descendants]\label{rmk:tangency_versus_descendant}
 Here we compare curve counts with the tangency constraint $\lll \T^m p\rrr$ to the gravitational descendant condition $\lll \psi^m p\rrr$.
For simplicity, assume that $(M^{2n},\beta)$ is a closed symplectic manifold, 
and let $A \in H_2(M;\Z)$ be a homology class such that $$
\ind_{M,A} = 2n - 6 + 2c_1(M,\beta) \cdot A - (2n-2) - 2m = 0.$$
In the context of algebraic Gromov--Witten theory, recall that one defines
$\gw_{M,A} \lll \psi^{m} p \rrr \in \Q$ as follows.
Consider the space $\ovl{\calM}_{M,A,1}$ of genus zero stable maps to $M$ in the homology class $A$ with one marked point.
This space comes with an evaluation map $\ev: \ovl{\calM}_{M,A,1} \rightarrow \CP^2$ and a complex line bundle $\mathbb{L} \rightarrow \ovl{\calM}_{M,A,1}$ whose fiber over a curve is the cotangent space to the curve at the marked point.
We then set $$\gw_{M,A}\lll \psi^m p\rrr := \int_{[\ovl{\calM}_{M,A}]^{\op{vir}}} \ev^*([p]^\vee)\cup\underbrace{c_1(\mathbb{L})\cup ... \cup c_1(\mathbb{L})}_{m},$$ where $[p]^\vee \in H^{2n}(M;\Q)$ denotes the point class and $[\ovl{\calM}_{M,A}]^{\op{vir}} \in H_{2n - 4 + 2c_1(M,\beta)\cdot A}(M;\Q)$ is the virtual fundamental class.
The case of open curves is more subtle, since then $\mathbb{L}$ is a line bundle over a space with codimension one boundary, but it does also appear in the literature (see for example \cite{eliashberg_sft_and_applications,pandharipande2014intersection,Fabert_descendants_2011}). Now suppose that $D$ is a smooth divisor passing through $p \in M$.
In \cite{Gathmann_absolute_and_relative} (see also \cite{tonkonog2018string}) it is observed that the line bundle $\mathbb{L}^{\otimes k}$ for $k \in \Z_{> 0}$ has a partially defined section $\sigma_k$, given by the $k$-jet of a curve at the marked point in the direction normal to $D$.
The intersection of the locus $\ev^{-1}(p) \cap \sigma_1^{-1}(0) \cap ... \cap \sigma_m^{-1}(0)$ with the main stratum $\calM_{M,A,1}$ consists of curves with smooth domain which are tangent to $D$ to order $m$ at the marked point.
The count of such curves would give precisely $m!\gw_{M,A}\lll\psi^m p\rrr$, 
provided we knew that the sections $\sigma_1,...,\sigma_m$ were sufficiently transverse and nonvanishing along the boundary strata $\ovl{\calM}_{M,A,1} \setminus \calM_{M,A,1}$.
However, it turns out that these sections always vanish on the boundary strata, and this typically gives additional contributions to the descendant invariant. 

As a simple example, we have $2!\gw_{\CP^1,\op{deg}=2} \lll\psi^2 p\rrr = 1/2$, which follows easily from the genus zero topological recursion formula (see for example \cite{kock2001notes}).
By contrast, we have $\gw_{\CP^1,\op{deg}=2}\lll \T^2 p\rrr = 0$.
In fact, the compactified moduli space $\ovl{\calM}_{\CP^1,\op{deg}=2}\lll \T^2 p\rrr$ is empty, essentially because there are no degree two Hurwitz maps $\CP^1 \rightarrow \CP^1$ with an order three branch point over $p$.
As a slightly more interesting example, we have $4!\gw_{\CP^2,2[L]}\lll \psi ^4 p\rrr = 3$,
whereas $\gw_{\CP^2,2[L]}\lll \T^4 p\rrr = 1$\footnote{As pointed out to the author by Chiu-Chu Melissa Liu, one can verify this computation by elementary projective geometry. However, if one wishes to use the standard integrable almost complex structure, one must take the local divisor $D$ to be sufficiently generic to avoid degenerate solutions.}
(see \S\ref{subsec:ppt} below).
One can show that discrepancies in both cases come from a configuration with two degree $1$ curves joined by a central ghost component containing one marked point.

Nevertheless, at least in the case of a global divisor, Gathmann \cite{Gathmann_absolute_and_relative} gives explicit descriptions of the extra boundary contributions occurring in the descendant invariants, described in terms of ``comb configurations'' related to the ghost configurations above.
This is related to the general fact that relative Gromov--Witten invariants can be reduced to absolute Gromov--Witten invariants, possibly with gravitational descendants (c.f. \cite{Maulik-Pandharipande_topological,Okounkov-Pandharipande_completed_cycles}). 
It might also be possible to extend this approach to the local divisor case in order to give
explicit descriptions of the descendant invariants $\gw_{M,A}\lll \psi^m p\rrr$ in terms of the tangency invariants $\gw_{M,A}\lll \T^m p\rrr$, 
plus extra discrepancy configurations.
\end{remark}

\subsection{Blowup constraints}\label{subsec:blowup}

We also count curves with nodal singularities or higher order self-contact singularities. These conditions are most naturally posed by considering curves in a suitable symplectic blowup at finitely many points.
First suppose that $(M^{2n},\beta)$ is a closed symplectic manifold with $2n \geq 4$. Fix distinct points $p_1,...,p_r \in X$ and let $\wt{X}$ denote a symplectic blowup of $X$ at the points $p_1,...,p_r$.
Recall that the symplectic blowup construction starts with a collection of disjoint Darboux balls $B_1,...,B_r$, with $B_i$ centered at $p_i$ and having area $a_i$ (i.e. radius $\sqrt{a_i/\pi}$).
Then $\wt{X}$ is given by removing each $B_i$ from $X$ and collapsing its boundary along the Hopf fibration.
Note that $\wt{X}$ depends on the points $p_1,...,p_r$ and balls $B_1,...,B_r$ up to symplectomorphism, but not up to symplectic deformation equivalence.
Each collapsed boundary $\bdy B_i$ gives rise to an exceptional divisor $\mathcal{E}_i \subset \wt{X}$, which is a symplectically embedded copy of $\CP^{n-1}$ whose line $L_i \subset \mathcal{E}_i$ has area $a_i$.
Note that the homology group $H_2(\wt{X};\Z)$ naturally splits as $H_2(X;\Z) \oplus \Z\langle [L_1],...,[L_r]\rangle$,
and we have $[\mathcal{E}_i] \cdot [L_i] = (-1)^{n-1}[pt]$,
where $[pt] \in H_0(\wt{X};\Z)$ is the point class.
Therefore we can uniquely write any class $\wt{A}$ in $H_2(\wt{X};\Z)$ as
$\wt{A} = A + C_1[L_1] + ... + C_r[L_r]$ for $C_1,...,C_r \in \Z$, where 
$\sum_{i=1}^r C_i[L_i]$ is the ``exceptional part'' of $\wt{A}$.

For $b_1,...,b_r \geq 1$, we introduce the following notation:
\begin{align*}
\gw_{X}\lll  \underbrace{(p_1,...,p_1)}_{b_1},...,\underbrace{(p_r,...,p_r)}_{b_r} \rrr := \sum_{A \in H_2(X;\Z)} T^{[\omega]\cdot A}\gw_{X,A}\lll\underbrace{(p_1,...,p_1)}_{b_1},...,\underbrace{(p_r,...,p_r)}_{b_r} \rrr,
\end{align*}
where $\gw_{X,A}\lll \underbrace{(p_1,...,p_1)}_{b_1},...,\underbrace{(p_r,...,p_r)}_{b_r} \rrr$ is defined to be $b_1!...b_r!\gw_{\wt{X},A-b_1[L_1]-...-b_r[L_r]}$.
Here the extra combinatorial factor $b_1!...b_r!$ has the effect of ordering the points mapping to each exceptional divisor $\mathcal{E}_i$, 
which is convenient since the marked points in the definition of $\gw_{X,A} \lll p_1,...,p_r \rrr$ are also ordered.
Note that the Novikov exponent $T^{[\omega]\cdot A}$ ignores the exceptional part $-b_1[L_1]-...-b_r[L_r]$,
and hence is independent of the sizes $a_i$ of the blowup, which we think of as being arbitrarily small.

Now suppose that $(X,\omega)$ is a symplectic cobordism between $(Y^+,\alpha^+)$ and $(Y^-,\alpha^-)$.
We can define an $\Li$ homomorphism 
$$\Phi\lll \underbrace{(p_1,...,p_1)}_{b_1},...,\underbrace{(p_r,...,p_r)}_{b_r} \rrr: \cha(Y^+) \rightarrow \cha(Y^-)$$ as follows.
The homology group $H_2(\wt{X},\Gamma^+\cup\Gamma^-;\Z)$ naturally splits as a direct sum of
$H_2(X,\Gamma^+\cup\Gamma^-;\Z)$ and $\Z\langle [L_1],...,[L_r]\rangle$.
The definition of $\Phi\lll \underbrace{(p_1,...,p_1)}_{b_1},...,\underbrace{(p_r,...,p_r)}_{b_r} \rrr $
is similar to that of the $\Li$ homomorphism $\Phi: \cha(Y^+) \rightarrow \cha(Y^-)$ from \S\ref{subsubsec:Li_str_on_CHA}, with $\Phi^k\lll \underbrace{(p_1,...,p_1)}_{b_1},...,\underbrace{(p_r,...,p_r)}_{b_r} \rrr (w_1,...,w_k)$ given by 
 \begin{align}\label{eq:Phi_k_blowup}
\sum_{\substack{J_1\sqcup ... \sqcup J_m = \{1,...,N\} \\ \Gamma_1^-,...,\Gamma_m^- \in \orbset(Y^-,\alpha^-) }} \#\ovl{\calM}_{\wt{X},\wt{A}_1}(\Gamma_1^+;\Gamma_1^-)...\#\ovl{\calM}_{\wt{X},\wt{A}_m}(\Gamma_m^+;\Gamma_m^-) \frac{T^{[\omega]\cdot(A_1+...+A_m)}b_1!...b_r!}{\mu_{\Gamma_1^-}...\mu_{\Gamma_m^-}\kappa_{\Gamma_1^-}...\kappa_{\Gamma_m^-}}x_{\Gamma_1^-}...x_{\Gamma_m^-},
\end{align}
but where the sum is now over all homology classes $\wt{A}_1,...,\wt{A}_m \in H_2(\wt{X},\Gamma_i^+ \cup \Gamma_i^-;\Z)$ such that the 
total exceptional part of $\wt{A}_1+...+\wt{A}_m$ is $-b_1[L_1] - ... - b_r[L_r]$.
In the case that $(X,\omega)$ is a filling of $(Y,\alpha)$, we get an $\Li$ augmentation
 
This does not generally induce an $\Li$ augmentation of $\chlin(X)$, unless $r = 1$ and $b_1 = 1$.
Also, the case $b_1 = ... = b_r = 1$ agrees with $\Phi\lll p_1 ,..., p_r \rrr$ from the previous subsection.

\subsection{Behavior under dimensional stabilization}\label{subsec:stabilization}
Here we discuss the extent to which curve counts persist to higher dimensions. 
We will show that the linearized contact homology $\Li$ algebra $\chlin(X)$
is invariant under taking a product with $\C$, in a suitable sense.
Similar statements hold for the $\Li$ augmentation $\aug\lll \T^k p \rrr: \chlin(X) \rightarrow \Lamo$ and the $\Li$ cobordism map $\chlin(X') \rightarrow \chlin(X)$ given an exact symplectic embedding $X \hookrightarrow X'$.
Together, these will directly translate into a stabilization result for symplectic capacities in \S\ref{subsubsec:behavior_under_stab}.

For simplicity we will assume that $X$ is a Liouville domain, with Liouville one-form $\la$.
Then Reeb orbits in $\bdy X$ have well-defined action, and for technical reasons in this subsection it will be more convenient to work over $\K = \Q$ rather than $\Lamo$, with $\chlin(X)$ endowed with its {\em increasing} action filtration $\calF_{<a}\chlin(X)$, $a \in \R_{> 0}$ (this is possible by our exactness assumption).
We note that since our capacities will be defined in terms of certain action minimizers subject to suitable constraints, we can safely ignore words of Reeb orbits of sufficiently high total action.

We will also make a simplifying assumption which allows us to ignore anchor curves in $\wh{X}$ for index reasons.
As in Remark~\ref{rmk:on_gradings_in_symplectic_fillings}, given Reeb orbits $\ga_1,\dots,\ga_k \in \bdy X$ with vanishing total homology class in $H_1(X)$, we can use a symplectic trivialization of $TX$ defined over a spanning surface to define the Conley--Zehnder indices $\cz(\ga_1),\dots,\cz(\ga_k)$, and the sum $\sum_{i=1}^k \cz(\ga_i)$ is independent of the choice of spanning surface provided that $2c_1(X) = 0$.
We will make the following assumption, which directly implies that any rational punctured curve without negative ends in $\R \times \bdy X$ or $\wh{X}$ has strictly positive index.
\begin{assumption}[``index positivity'']\label{assump:index_pos}
We have $2c_1(X) = 0$, and for any Reeb orbits $\ga_1,\dots,\ga_k$ in $\bdy X$ satisfying $\sum_{i=1}^k [\ga_i] = 0 \in H_1(X)$ we have $\sum_{i=1}^k \cz(\ga_i) > (n-3)(k-2)$.
\end{assumption}
\NI For instance, this holds if there is a global trivialization of $TX$ such that $\cz(\ga) > |n-3|$ for all Reeb orbits $\ga$ in $\bdy X$. Recall that a convex domain with smooth boundary in $\R^{2n}$ is a fortiori ``dynamically convex'' in the sense that all Reeb orbits $\ga$ satisfy $\cz(\ga) \geq n+1$ \cite{hofer1998dynamics}.

\begin{remark}
The papers \cite{Hind-Kerman_new_obstructions,CG-Hind_products,Ghost_stairs_stabilize,Mcduff_remark} also prove stabilization theorems in specific examples which are similar in spirit to and provide inspiration for the one in this paper.
\end{remark}

Naively, the main point for stabilization is that, for a split almost complex structure, curves in the product $X \times \C$ are given by a product of curves in each factor, and curves in $\C$ effectively must be constant.
However, there are a few subtleties to address before making this precise.
For one, the product $X \times \C$ is not strictly speaking a symplectic filling, due to its noncompactness. If we replace $\C$ with the two-ball $B^2(a)$ of area $a$, then $X \times B^2(a)$ is compact but has corners. We can smooth the corners to obtain an honest symplectic filling, but the smoothing process is not entirely canonical. In particular, different choices of smoothings lead to different Reeb dynamics.
Moreover, an admissible almost complex structure for the smoothed product is no longer split, so we cannot quite argue via the projection to the $\C$ factor.
Another issue is that the index of a punctured curve typically changes under stabilization and can potentially become negative, meaning that stabilized curves might be irregular and disappear after a small perturbation.
Since $\chlin$ effectively involves curves with one negative end, invariance under dimensional stabilization is at least plausible based on index considerations as in \S\ref{subsubsec:stab} (in contrast to $\cha$, which involves multiple negative ends). However, the $\Li$ structure maps of $\chlin$ technically count {\em anchored} curves with one negative end, hence the symplectization components could still have multiple negative ends and some care is needed.

To address these issues, we will take some care in constructing the smoothing of $X \times B^2(S)$, and we then use intersection theory for punctured pseudoholomorphic curves to rule out any ``unexpected curves'' after stabilizing.
In more detail, assume that $\bdy X$ has nondegenerate Reeb orbits. By \cite[Lem. 3.6.2]{enumerating2}, for any $S \in \R_{>0}$ we can construct a smoothing $X \smx B^2(S)$ of $X \times B^2(S)$ such that 
\begin{itemize}
  \item $X \times \{0\} \subset X \smx B^2(S) \subset X \times B^2(S)$
  \item each Reeb orbit $\ga$ in $\bdy X$ with action less than $S$ is also a Reeb orbit $\ga^\stab$ in $\bdy(X \smx B^2(S))$ lying in $\bdy X \times \{0\}$, and it is nondegenerate with normal Conley--Zehnder $\cz^\perp(\ga^\stab) = 1$
  \item any other Reeb orbit of $\bdy (X \smx B^2(S))$ has action greater than or equal to $S$.
\end{itemize}
Here the {\em normal Conley--Zehnder index} measures winding in the direction normal to the divisor $X \times \{0\} \subset X \smx B^2(S)$ and is taken with respect to the obvious choice trivialization of its symplectic normal bundle coming from the product splitting of $X \times B^2(S)$ (see e.g. \cite{MorS} or \cite[\S3.6]{enumerating2} for more details).

As a shorthand put $X^\stab := X \smx B^2(S)$.
Note that any (partial) trivialization of $TX$ naturally extends to a split trivialization of $TX^\stab$ along $X \times \{0\}$, 
and for a Reeb orbit $\ga$ in $\bdy X$ with $\cz^\perp(\ga^\stab) = 1$ we have $\cz(\ga^\stab) = \cz(\ga) + 1$.
 Using Assumption~\ref{assump:index_pos}, any rational punctured curve $C$ in $\R \times \bdy X^\stab$ or $\wh{X}^\stab$
with all positive ends lying in $\bdy X$ and no negative ends satisfies 
\begin{align}\label{eq:stab_curve_ind_at_least_2}
\ind(\wt{C}) > 2.
\end{align}

Note that on the level of $\K$-modules we have a natural inclusion $\jmath: \chlin(X) \hookrightarrow \chlin(X^\stab)$ sending $\ga$ to $\ga^\stab$.
We seek to show that for good Reeb orbits $\ga_1,\dots,\ga_k$ in $\bdy X$ with total action less than $S$ we have
\begin{align}
\jmath(\ell^k_\lin(\ga_1,\dots,\ga_k)) = \ell^k_\lin(\ga_1^\stab,\dots,\ga_k^\stab).
\end{align}
We take an admissible almost complex structure $J_{X^\stab}$ on $\wh{X}^\stab$ which restricts to an admissible almost complex structure $J_X$ on $\wh{X}$,
and similarly let $J_{\bdy X^\stab}$ be an admissible almost complex structure on $\R \times \bdy X^\stab$ which restricts to an admissible almost complex structure $J_{\bdy X}$ on $\R \times \bdy X$.
We assume that $J_{X^\stab}$ and $J_X$ restrict to $J_{\bdy X^\stab}$ and $J_{\bdy X}$ respectively on their cylindrical ends.

\begin{prop}\label{prop:Linf_operations_stabilize}
    For any $S > 0$ there is a natural filtration-preserving injective chain map 
\begin{align}\label{eq:Linf_ops_stab_filt_bar}
    \calF_{< S}\bar\chlin(X) \hookrightarrow \calF_{< S}\bar\chlin(X \smx B^2(S)).
\end{align}
\end{prop}
\begin{remark}
The appearance of bar complexes here reflects the fact that $\calF_{< S}\chlin(X)$ is not itself an $\Li$ subalgebra of $\chlin(X)$, since $\ga_1,\dots,\ga_k \in \calF_{<S}\chlin(X)$ only implies $\ell^k(\ga_1,\dots,\ga_k) \in \calF_{< kS}\chlin(X)$, whereas $\calF_{< S}\bar\chlin(X)$ effectively encodes all $\Li$ operations in $\chlin(X)$ such that the inputs have total action less than $S$.  
\end{remark}

\begin{proof}[Proof of Proposition~\ref{prop:Linf_operations_stabilize}]
For a Reeb orbit $\eta$ in $\bdy X^\stab$, the compactified moduli space $\ovl{\calM}^\stab$ computing the term $\langle \ell^k_\lin(\ga_1^\stab,\dots,\ga_k^\stab),\eta \rangle$ consists of anchored pseudoholomorphic buildings $C$ involving one or more symplectization levels in $\R \times \bdy X^\stab$ and possibly a level in $\wh{X}^\stab$ (these are the anchors).
By our action assumption, every asymptotic Reeb orbit which appears in a component of $C$ has action less than $S$ and hence must lie in $\bdy X \times \{0\}$. In particular, we have $\eta = \ga_0^\stab$ for some Reeb orbit $\ga_0$ in $\bdy X$.
Let $\ovl{\calM}$ denote the compactified moduli space computing the corresponding unstabilized term $\langle \ell^k_\lin(\ga_1\dots,\ga_k),\ga_0 \rangle$.
By our choice of almost complex structures there is a natural inclusion $\ovl{\calM} \hookrightarrow \ovl{\calM}^\stab$.
Given $C \in \ovl{\calM}^\stab$, we seek to show that every constituent curve component of $C$ lies in $\R \times \bdy X$ or $\wh{X}$, i.e. $C$ corresponds to an element of $\ovl{\calM}$.

Assume for the moment that the unstabilized moduli space $\ovl{\calM}$ is regular, and in particular for a configuration in $\ovl{\calM}$ every constituent curve component has nonnegative index. Then, since the total index is $1$, every configuration in $\ovl{\calM}$ consists of just a single curve component, which necessarily lies in $\R \times \bdy X$ (note that anchors in $\wh{X}$ are ruled out thanks to Assumption~\ref{assump:index_pos}).
By Lemma~\ref{lem:stab_symp_curves_are_slice} below, every component $C_0$ of $C$ with at least one negative end must lie in $\R \times (\bdy X \times \{0\})$, so it has nonnegative index as a curve in $\R \times \bdy X$, and hence also as a curve in $\R \times \bdy X^\stab$ (recall the index stabilization formula \eqref{eq:stab_ind_intro} from \S\ref{sec:intro}). One can also show that $C_0$ is still regular as a curve in the stabilization and counts with the same sign (see \cite[\S A]{enumerating2}).
Since the total index of $C$ is $1$ and every component of $C$ without any negative ends has index at least $3$ by \eqref{eq:stab_curve_ind_at_least_2}, we conclude that $C$ has just a single component and it is contained in $\R \times (\bdy X \times \{0\})$, and hence indeed corresponds to an element of $\ovl{\calM}$.

\begin{lemma}\label{lem:stab_symp_curves_are_slice}
Any $J_{\bdy X^\stab}$-holomorphic curve component in $\R \times \bdy X^\stab$, all of whose asymptotic Reeb orbits have action less than $S$ and having at least one negative end, is entirely contained in $\R \times (\bdy X \times \{0\})$. 
\end{lemma}
\begin{proof}

This is a minor variation on the proof of Lemma 3.6.3 in \cite{enumerating2}.
Using a version of Siefring's intersection theory \cite{siefring2011intersection}, for such a curve component $C_0$ which does {\em not} lie in $Q$, we have 
\begin{align}\label{eq:Siefring_type_eq}
\op{push}(C_0) \cdot Q = C_0 \cdot Q - \sum\limits_{z\;\text{pos. punc.}} \wind_z + \sum\limits_{z\;\text{neg. punc.}} \wind_z,
\end{align}
where $Q$ denotes the (noncompact) divisor $\R \times (\bdy X \times \{0\})$.
Here $C \cdot Q$ is the count of intersection points between $C_0$ and $Q$ before perturbing, which is necessarily nonnegative by positivity of intersections.
Meanwhile, $\op{push}(C) \cdot Q$ is the homological intersection after perturbing $C_0$ at infinity in the direction normal to $Q$ via the aforementioned trivialization, whence $\op{push}(C) \cdot Q = 0$.
We also have the winding number bounds $\wind_z \leq \lfloor \cz^\perp(\ga_z)/2 \rfloor = 0$ for $z$ a positive puncture and  $\wind_z \geq \lceil \cz^\perp(\ga_z)/2 \rceil = 1$ for $z$ a negative puncture (c.f. \cite[\S3.5]{wendl_SFT_notes}).
Since this violates \eqref{eq:Siefring_type_eq}, we conclude that $C_0$ must lie in $Q$.
\end{proof}

Lastly, recall that in the above we assumed regularity for $\ovl{\calM}$, which in general can only hold in a virtual sense.
More precisely, we compute $\#\ovl{\calM}$ using virtual perturbations in $\R \times \bdy X$ and $\wh{X}$, which we can then extend trivially in the normal direction to obtain virtual perturbations in $\R \times \bdy X^\stab$ and $\wh{X}^\stab$ for the computation of $\#\ovl{\calM}^\stab$. 
With these choices \eqref{eq:Siefring_type_eq} remains valid, and the rest of the above proof carries over.
\end{proof}

Similarly, for the local tangency augmentation we have:
\begin{prop}\label{prop:aug_map_stab}

    Under the inclusion of \eqref{eq:Linf_ops_stab_filt_bar}, we can further arrange that the chain map
$\calF_{< S}\bar\chlin(X \smx B^2(S)) \ra \bar\K$
induced by the $\Li$ augmentation 
  $\aug_{X \smx B^2(S)}\lll \T^m p\rrr: \chlin(X \smx B^2(S)) \ra \K$ 
  restricts to the corresponding chain map $\calF_{< S}\bar\chlin(X) \ra \bar\K$ induced by $\aug_X\lll \T^m p\rrr: \calF_{< S}\chlin(X) \ra \K$.
\end{prop}

The proof is closely analogous to that of Proposition~\ref{prop:Linf_operations_stabilize}.
Here we take the local divisor in $X \smx B^2(S)$ to be of the form $D \times B^2(\eps)$ for small $\eps > 0$, where $D \subset X$ is the local divisor at $p \in X$.
We then appeal to the following analogue of Lemma~\ref{lem:stab_symp_curves_are_slice}, noting that a curve in $\wh{X}^\stab$ satisfying the constraint $\lll \T^m p \rrr$ cannot be disjoint from $\wh{X}^\stab$.
\begin{lemma}\label{lem:stab_aug_curves_are_slice}
Any $J_{X^\stab}$-holomorphic curve curve in $\wh{X}^\stab$ all of whose asymptotic Reeb orbits have action less than $C$ is either disjoint from $\wh{X} \times \{0\}$ or entirely contained in it.  
\end{lemma}

\begin{remark}
The formulation and proof of the analogous statement for the $\Li$ cobordism map induced by the stabilization $X \smx B^2(S) \hookrightarrow X' \smx B^2(S)$ induced by an exact symplectic embedding $X \hookrightarrow X'$ follow along very similar lines, and we leave the details to the reader.
\end{remark}

\subsection{Constraints via extra negative ends}\label{subsec:extra_negative_ends}

We now give an alternative formulation of the local tangency
constraint $\lll \T^mp\rrr$, defined by excising a skinny ellipsoid and counting curves with an extra negative end.
This has the advantage of putting the $\Li$ augmentations $\aug \lll \T^{m}p\rrr: \chlin(X) \rightarrow \Lamo$ into a unified SFT framework and sheds light on the nature of the local tangency constraint.

Let $(X^{2n},\omega)$ be a symplectic filling of $(Y,\alpha)$, and suppose we are interested in understanding the $\Li$ augmentation $\aug_{X,\lin} \lll \T^m p \rrr: \chlin(X) \rightarrow \Lamo$.
To see how the constraint $\lll \T^m p\rrr$ can be reformulated in terms of extra negative ends, let $U \subset X$ be a small neighborhood of $p$ which is symplectomorphic to the ball $B^{2n}(\eps)$ of area $\eps$.
By stretching the neck along the boundary of $U$, we find that 
$\aug_{X,\lin}\lll \T^k p \rrr$ is $\Li$ homotopic to the composition $\aug_{U,\lin}\lll \T^k p \rrr \circ \Phi_\lin$,
where $\Phi_\lin$ denotes the $\Li$ cobordism map $\Phi_\lin: \chlin(X) \rightarrow \chlin(U)$.
Here $\aug_{U,\lin} \lll \T^mp \rrr: \chlin(U) \rightarrow \Lamo$ is identified with the $\Li$ augmentation 
$\aug_{B^{2n}(\eps),\lin}\lll \T^m p \rrr: \cha(B^{2n}(\eps)) \rightarrow \Lamo$, and can be defined
either using Morse--Bott techniques as in the previous section or by slightly perturbing $B^{2n}(\eps)$ to $E(\eps+\delta_1,...,\eps+\delta_n)$, where $0 < \delta_1 < ... < \delta_n$ are small compared to $\eps$ and rationally independent.
The data associated to $B^{2n}(\eps)$ is essentially the same as that of the unit ball $B^{2n}$, up to rescaling action.
Recall that, after listing the Reeb orbits of $E(\eps+\delta_1,...,\eps+\delta_n)$ in order of increasing action, the $k$th orbit has Conley--Zehnder index $n- 1 + 2k$, and in particular the $\Li$ operations on $\chlin(B^{2n})$ are all trivial for degree parity reasons.
We will not endeavor to compute the $\Li$ augmentation $\aug_{B^{2n},\lin}\lll \T^m p\rrr$ here, although the computations for $\CP^2$ in \cite{enumerating} can be viewed as the special case where all inputs are the long simple Reeb orbit of the (slightly perturbed) four-ball.
Presently, the main point is that we can use this augmentation to recover the augmentation $\aug_{X,\lin}\lll \T^m p \rrr$ from the cobordism map $\Phi_\lin$ in a ``universal'' way.

However, to obtain a cleaner reformulation of the constraint $\lll \T^mp \rrr$, it is more convenient to replace the round neighborhood $U$ by a neighborhood $V$ which is symplectomorphic to a small skinny ellipsoid $E_\sk := E(\eps,L,...,L)$, where $\eps$ and $L$ are both small and we have $L \gg \eps$. The reason is that the $\Li$ augmentation
$\aug_{E_\sk,\lin}: \chlin(E_\sk) \rightarrow \Lamo$ is particularly simple. 
Indeed, let $\alpha_1$ denote the simple Reeb orbit of $\bdy E_\sk$ of action $\eps$, and let $\alpha_k$ denote its $k$-fold iterate for $k \in \Z_{> 0}$.
Note that all other Reeb orbits of $\bdy E_\sk$ have action at least $L$,
and for $L$ sufficiently large compared to $\eps$ they also have arbitrarily large Conley--Zehnder indices.
\begin{prop}\label{prop:compute_T_aug}
For $L$ sufficiently large relative to $\eps$, $\auglin^k\lll \T^m p\rrr (\alpha_{m_1},...,\alpha_{m_k})$ is equal to $T^{(m+1)\eps}$ if $k = 1$ and $m_1 = m + 1$, and $0$ otherwise.
\end{prop}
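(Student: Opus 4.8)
The plan is to interpret $\auglin^k\lll\T^m p\rrr(\alpha_{m_1},\dots,\alpha_{m_k})$ geometrically and then to confine the relevant curves to the short $B^2$-direction of $E_\sk$. First, since $E_\sk$ is a symplectic filling and $p$ is a single point, this number is a signed count (with the usual $1/|\op{Aut}|$ weights) of rigid connected genus zero curves in $\wh{E_\sk}$ with positive ends asymptotic to $\alpha_{m_1},\dots,\alpha_{m_k}$, no negative ends, passing through $p$ with an order $m$ tangency to a chosen local divisor germ $D$. By Remark~\ref{rmk:another_defn_of_point_aug} there are no anchors: once $L \gg \eps$ every Reeb orbit $\gamma$ of $\bdy E_\sk$ satisfies $\cz(\gamma) > |n-3|$, since the iterates of the short orbit have $\cz(\alpha_j) = n-1+2j$ and all other orbits have Conley--Zehnder index growing without bound as $L \to \infty$. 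Feeding the asymptotics into the index formula, rigidity forces $\sum_{i=1}^k m_i = m+2-k$; in particular for $k \geq 2$ one gets $\sum_i m_i \leq m$, for $k=1$ one gets $m_1 = m+1$, and all other tuples $(m_1,\dots,m_k)$ give moduli spaces of nonzero expected dimension and so contribute $0$.

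Second, I would exploit the product structure. Writing $E_\sk = E(\eps,L,\dots,L)$ with coordinates $(z_1,\dots,z_n)$, the short orbit $\alpha_1$ and all its iterates lie in the core $\{z_2 = \dots = z_n = 0\}$, whose intersection with $E_\sk$ is the disk $B^2(\eps)\times\{0\}$. For $L$ large, $E_\sk$ contains a product region $B^2(\eps')\times B^2(R)^{n-1}$ with $\eps'$ close to $\eps$ and $R$ large, and I would take the admissible almost complex structure to be split there, take $p = (p_1,0,\dots,0)$ in the core slice, and take $D$ to be a germ of the complex hyperplane $\{z_1 = p_1\}$. For a curve $u$ as above, its $(z_2,\dots,z_n)$-components form a holomorphic map that is asymptotic to $0$ at every puncture; since the energy of $u$ equals $\eps\sum_i m_i = \eps(m+2-k) \leq (m+1)\eps$, taking $R > (m+1)\eps$ confines $u$ to the product region, whereupon that map is bounded and vanishes at all punctures, hence is identically $0$. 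This is exactly the confinement mechanism used in the proofs of Propositions~\ref{prop:chlin_stab} and~\ref{prop:tangency_stab} (alternatively one may simply stabilize the $B^2(\eps)$ computation $n-1$ times through those propositions), and it identifies the count with the corresponding count in $\wh{B^2(\eps)}$.

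Third is the disk computation. A curve $u$ in $\wh{B^2(\eps)}$ with positive ends asymptotic to $\alpha_{m_1},\dots,\alpha_{m_k}$ extends to a proper holomorphic branched cover $\Sigma \to \ovl{B^2(\eps)}$ of degree $d = \sum_i m_i$, equal to the total boundary winding; the order $m$ tangency to $D$ at the point over $p$ forces a ramification point of ramification index $\geq m+1$, hence $d \geq m+1$. For $k \geq 2$ this contradicts $d = m+2-k \leq m$, so the moduli space is empty and the count is $0$. For $k = 1$ we have $d = m+1$, and Riemann--Hurwitz for a degree $m+1$ cover of the disk by a disk ($1 = (m+1) - (\text{total ramification})$) forces the total ramification to equal $m$ and hence to be concentrated at the single order $m$ point over $p$; such a branched cover of the disk is unique up to biholomorphism, and fixing the asymptotic marker rigidifies it and kills the residual $\Z/(m+1)$ automorphisms, so it contributes $+1$, with energy $\calA(\alpha_{m+1}) = (m+1)\eps$. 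Thus $\auglin^1\lll\T^m p\rrr(\alpha_{m+1}) = T^{(m+1)\eps}$, recovering the $B^2$ computation of Example~\ref{ex:two_ball} and finishing the proof.

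The hard part is making the confinement step rigorous: one must produce a genuinely admissible $J$ on $\wh{E_\sk}$ that is a product on a large region yet cylindrical near $\bdy E_\sk$ — which, as in \S\ref{subsec:stabilization}, introduces Morse--Bott Reeb dynamics that must be treated via cascades (or circumvented by a careful nondegenerate perturbation in which the stabilized short orbits acquire normal Conley--Zehnder index one) — and one must bookkeep energy precisely enough that curves leaving the product region are genuinely expensive. Transversality for the non-generic divisor $D$ is the other point, absorbed by the invariance of the augmentation (one perturbs and checks the count is unchanged). A minor additional point is the sign and automorphism weighting for the possibly multiply covered branched cover $z \mapsto z^{m+1}$, where the asymptotic marker is precisely what rigidifies it.
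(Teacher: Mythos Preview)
Your proposal is correct and follows essentially the same approach as the paper: reduce from $E_\sk$ to the $B^2(\eps)$ slice via the stabilization/confinement mechanism of \S\ref{subsec:stabilization}, then identify the relevant curves as Hurwitz covers and use an index/degree argument to force $k=1$ and $m_1=m+1$. Your treatment is in fact more explicit than the paper's: you compute the $E_\sk$-index directly to get $\sum_i m_i = m+2-k$ for rigidity and then use the degree bound $\sum_i m_i \geq m+1$ to kill $k\geq 2$, whereas the paper phrases this as ``Hurwitz covers have $\sum_i m_i = m+1$ and $E_\sk$-index $2k-2$''; these are two sides of the same coin.
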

\begin{proof}
We find as in the proof of Proposition \ref{prop:aug_map_stab} that the compactified moduli space $\ovl{\calM}_{E_\sk}((\alpha_{m_1},...,\alpha_{m_k});\nil)\lll \T^m p \rrr$ reduces to $\ovl{\calM}_{B^2(\eps)}((\alpha_{m_1},...,\alpha_{m_k});\nil)\lll \T^m p \rrr$.
We can easily compute $\#\ovl{\calM}_{B^2(\eps)}((\alpha_{m_1},...,\alpha_{m_k});\nil)\lll \T^m p \rrr$ as a count of Hurwitz covers of $\CP^1$ with a branch point of order $m+1$ over $p \in B^2$ and branch points of orders $m_1,...,m_k$ over $\infty$.
In particular we must have $m_1 + ... + m_k = m+1$.
As an element of $\ovl{\calM}_{E_\sk}((\alpha_{m_1},...,\alpha_{m_k});\nil)\lll \T^m p \rrr$, the index of such a curve is computed to be $2k-2$, which means that $\auglin^k\lll \T^m p\rrr (\alpha_{m_1},...,\alpha_{m_k})$ vanishes for degree reasons unless $k=1$.
\end{proof}

The upshot is that the $\Li$ augmentation $\aug_{X,\lin}\lll \T^m p \rrr$ can be described (up to $\Li$ homotopy) in terms of the $\Li$ cobordism map $\Phi_\lin: \chlin(X) \rightarrow \chlin(V)$:
\begin{cor}
In the above setting we have
 \begin{align}
 \aug_{X,\lin}^k\lll \T^m p \rrr(-,...,-)= T^{(m+1)\eps}\cdot \langle \Phi_\lin^k(-,...,-), \alpha_{m+1}\rangle.
  \end{align} 
\end{cor}
\NI Here  for $\gamma_1,...,\gamma_k \in \chlin(X)$, $\langle \Phi_\lin^k(\gamma_1,...,\gamma_k),\alpha_{m+1}\rangle$ is the coefficient of the generator $\alpha_{m+1}$ in $\Phi_\lin^k(\gamma_1,...,\gamma_k) \in \chlin(V)$. Note that the extra factor $T^{(m+1)\eps}$ is present to compensate for the fact that the orbits of $V$ have not-quite-zero action.

\begin{remark}
In principle we can similarly apply neck stretching to reformulate the blowup augmentation $\aug_{X}\lll \underbrace{p,...,p}_b \rrr: \cha(Y) \rightarrow \Lamo$ and its generalizations in terms of curves in $X \setminus E_\sk$. 
Indeed, we find that the $\Li$ augmentation $\aug_X\lll \underbrace{p,...,p}_b \rrr$ is $\Li$ homotopic to the composition
$\aug_{E_\sk}\lll \underbrace{p,...,p}_b \rrr \circ \Phi$, where $\Phi: \cha(Y) \rightarrow \cha(E_\sk)$ denotes the $\Li$ homomorphism associated to the cobordism $X \setminus E_\sk$.
However, we note that there is no simple analogue of Proposition \ref{prop:compute_T_aug} in this setting. For example, consider the term $\aug^1_{E_\sk}\lll p,p\rrr(\alpha_2)$, where $\alpha_2$ denotes the two-fold cover of simple Reeb orbit of $\bdy E_\sk$ of minimal action. One can show using the relative adjunction formula and ECH writhe bounds \cite{hutchings2016beyond} that there are no somewhere injective planes in the one-point blowup of $E_\sk$ with top end asymptotic to $\alpha_2$. 
However, there is a two-level building representing such a term, with top level in $\R \times \bdy E_\sk$ consisting of a double cover of a trivial cylinder with top end asymptotic to $\alpha_2$ and two bottom ends asymptotic to $\alpha_1$, and with bottom level in $E_\sk$ consisting of two planes each asymptotic to $\alpha_1$.
The count of such buildings and hence the determination of the coefficient $\aug_{E_\sk}\lll p,p\rrr(\alpha_1,\alpha_1)$ depends on the particularities of the chosen virtual perturbations used, i.e. it is not completely canonical.
\end{remark}

\subsection{Multibranched local tangency constraints}\label{subsec:multibr_loc_tan}

Now suppose we have a symplectic filling $(X,\omega)$ of $(Y,\alpha)$, and fix integers $b_1,...,b_r \geq 1$ and $m_j^i \geq 1$ for $1 \leq i \leq r$, $1 \leq j \leq b_i$.
We can also define an $\Li$ augmentation
$$\aug\lll (\T^{m^1_1}p_1,...,\T^{m^1_{b_1}}p_1) ,..., (\T^{m^r_1}p_r,...,\T^{m^r_{b_r}}p_r) \rrr: \cha(Y) \rightarrow \Lamo$$
which counts curves ``multibranched'' tangency constraints at the points $p_1,...,p_r$. For example, the constraint $\lll \T^{m_1}p,...,\T^{m_b}p\rrr$ corresponds to curves with $b$ local branches passing through $p$, such that the $i$th local branch has tangency order $m_i$ for the local divisor $D$ for $i = 1,...,b$.
These augmentations have not yet appeared in the literature, although analogous counts for closed curves in closed symplectic four-manifolds are discussed in detail in \cite[\S2.3]{enumerating}. 

We can also put these counts into an SFT framework as follows.
Let $X'= X \setminus (V_1 \cup ... \cup V_r)$ be the resulting symplectic cobordism after removing $r$ pairwise disjoint neighborhoods $V_1,...,V_r \subset X$ each symplectomorphic to a small skinny ellipsoids as in $E_\sk$ above. 
Let $\Phi_{X'}: \cha(Y) \rightarrow \cha(\bdy V_1 \cup ... \cup \bdy V_r)$ denote the $\Li$ cobordism map associated to $X'$. Note that we have an identifcation $\cha(\bdy V_1 \cup ... \cup \bdy V_r) = \cha(\bdy V_1) \otimes ... \otimes \cha(\bdy V_r)$ as $\Lamo$-modules, with both $\Li$ structures trivial.
We define $\aug\lll (\T^{m^1_1}p_1,...,\T^{m^1_{b_1}}p_1) ,..., (\T^{m^r_1}p_r,...,\T^{m^r_{b_r}}p_r) \rrr(\gamma_1,...,\gamma_k)$ by 
\begin{align*}
T^{\calA(w_1)+...+\calA(w_r)}\mu_{\Gamma_1}...\mu_{\Gamma_r}\cdot \langle \Phi(\gamma_1,...,\gamma_k), \Gamma_1\otimes ... \otimes \Gamma_r \rangle,
\end{align*}
where $w\Gamma_i = \alpha_{m^i_1+1}...\alpha_{m^i_{b_i}+1} \in \cha(\bdy V_i)$ for $i = 1,...,r$.
Here the combinatorial factor $\mu_{\Gamma_1}...\mu_{\Gamma_r}$ plays the role of $b_1!...b_r!$ from the \S\ref{subsec:blowup} and has the effect of ordered the constrained marked points.
That is, we count rational curves in $X'$ with positive ends $\gamma_1,...,\gamma_k$ and $b_i$ specified negative ends on each $\bdy V_i$, and each negative end is a $m^i_j$-fold iterate of the simple short orbit $\alpha_1$.
As explained in \cite{enumerating}, in the case of a closed symplectic manifold $(M,\beta)$, by similarly counting curves in $M \setminus (V_1 \cup ... \cup V_r)$ with specified negative ends (and no positive ends), we define invariants
\begin{align*}
\gw_{M,A}\lll(\T^{m^1_1}p_1,...,\T^{m^1_{b_1}}p_1) ,..., (\T^{m^r_1}p_r,...,\T^{m^r_{b_r}}p_r)\rrr \in \Q.
\end{align*}

\sss

Suppose that $M = X^- \circledcirc X^+$ is a closed symplectic manifold given by concatenating a symplectic filling $X^-$ and a symplectic cap $X^+$ with common contact boundary $Y$.
Extending the discussion in \S\ref{sec:CL}, using geometric constraints contained in $X^-$, neck stretching along $Y$ decomposes closed curve invariants of $M$ into punctured curves in $X^+$ (which are encoded by the Cieliebak--Latschev element) and punctured curves in $X^-$ carrying the geometric constraints.
Namely, we have e.g.
\begin{align}
(\auglin)_*\lll \T^m p\rrr(\cllin) = \gw_M \lll \T^m p\rrr,
\end{align}
and more generally
\begin{align}
\aug_* \lll (\T^{m^1_1}p_1,...,\T^{m^1_{b_1}}p_1) ,..., (\T^{m^r_1}p_r,...,\T^{m^r_{b_r}}p_r) \rrr(\cl) \\= \gw_M \lll (\T^{m^1_1}p_1,...,\T^{m^1_{b_1}}p_1) ,..., (\T^{m^r_1}p_r,...,\T^{m^r_{b_r}}p_r) \rrr,
\end{align}
provided that there are no rigid closed curves in $X^-$ satisfying the same constraints (so that closed curves are forced to split nontrivially under neck stretching).

\subsection{Pushing points together}\label{subsec:ppt}

As observed in \cite{enumerating}, we can combine multibranched tangency constraints at several different points to produce various constraints at a single point. 
For simplicity, we restrict to the case of dimension four.
The starting point is the following heuristic, which appears in the work \cite{Gathmann_GW_of_blowups} of Gathmann. 
Given an index zero curve satisfying point constraints at $q$ and $p$, consider what happens as we vary the locations of the point constraints such that $q$ approaches $p$ along some tangent vector $v$ to $p$.
If the marked points in the domain stay separated during the deformation, the limit should be a curve with two marked points which both map to $p_1$, and we can interpret this as a curve with a node at $p$. On the other hand, if the marked points collide in the domain, the result is a curve with a single marked point mapping to $p$, where it is tangent to the complexification of $v$.
Gathmann makes this precise with the following formula for curves in $\CP^2$:
\begin{align}\label{eq:gathmann}
\gw_{\CP^2,A}\lll q,p,- \rrr = \gw_{\CP^2,A}\lll \T p,- \rrr + \gw_{\CP^2,A}\lll p,p,- \rrr,
\end{align}
where the symbol $-$ is a shorthand for any additional constraints not involving $q$ or $p$.
Note that the last term can also be written as $2\gw_{\wt{\CP}^2,A-\mathcal{E}}$.

Using the formulation from the previous subsection, we can make the process of pushing points together precise by surrounding two small skinny ellipsoidal neighborhoods $V_1,V_2$ by another such neighborhood $V$, and then stretching the neck along the boundary of $V$.
In the example above with two point constraints $q \in V_1$ and $p \in V_2$, we start with a curve with an $\alpha_1$ negative end on each of $\bdy V_1,\bdy V_2$.
After neck stretching, in the cobordism $V \setminus (V_1 \cup V_2)$ there are two possibilities:
\begin{itemize}
\item
a union of two cylinders, one with an $\alpha_1$ positive end on $\bdy V$ and a negative end on $\bdy V_1$ and one with an $\alpha_1$ positive end on $\bdy V$ and a negative end on $\bdy V_2$.
\item 
a pair of pants with one $\alpha_2$ positive end on $\bdy V$ and an $\alpha_1$ negative end on each of $\bdy V_1,\bdy V_2$.
\end{itemize}
Looking at the remaining part of the curve in $\CP^2 \setminus V$, the first situation corresponds precisely to $\gw_{\CP^2,A}\lll \T p \rrr$, while the second situation corresponds precisely to $\gw_{\CP^2,A}\lll p,p\rrr$.\footnote{Strictly speaking this decomposition only holds virtually, as on the level of $J$-holomorphic moduli spaces the pair of pants is actually represented by a two level building. Nevertheless, one can show using obstruction bundle gluing techniques that it behaves as a unique pair of pants for the purposes of \eqref{eq:gathmann}, and similarly for \eqref{eq:ppt} below; see \cite[\S4.2]{enumerating} for details.} 

In general, given constraints at two distinct points $q,p \in M$, we can push them together as above by surrounding $V_1 \cup V_2$ by $V$ and stretching the neck.
By studying curves in the symplectic cobordism $V \setminus (V_1 \cup V_2)$, we get the following outcome 
(the details  appear in \cite{enumerating}):
\begin{thm}[``pushing points together'']
If $(M^4,\beta)$ is a closed four-dimensional symplectic manifold, we have the following equality:
\begin{align}\label{eq:ppt}
&\gw_{M,A}\lll (\T^mq),(\T^{m_1}p,...,\T^{m_b}p),-\rrr \\&= \gw_{M,A}\lll(\T^mp,\T^{m_1}p,...,\T^{m_b}p),-\rrr + \sum_{i=1}^b\gw_{M,A}\lll(\T^{m_1}p,...,\T^{m_i+m}p,...\T^{m_b}p),-\rrr.
\end{align}
Similarly, if $(X^4,\omega)$ is a four-dimensional symplectic filling, we have the following $\Li$ homotopy:
\begin{align*}
&\aug\lll (\T^mq),(\T^{m_1}p,...,\T^{m_b}p),-\rrr \\ &\sim \aug\lll(\T^mp,\T^{m_1}p,...,\T^{m_b}p),-\rrr + \sum_{i=1}^b\aug\lll(\T^{m_1}p,...,\T^{m_i+m}p,...\T^{m_b}p),-\rrr.
\end{align*}
\end{thm}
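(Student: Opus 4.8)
The plan is to prove the ``pushing points together'' identity by a neck-stretching argument in the style of \S\ref{subsec:ppt}, using the extra-negative-ends formulation of tangency constraints from \S\ref{subsec:extra_negative_ends}. First I would set up the geometry precisely: choose pairwise disjoint skinny ellipsoidal neighborhoods $V_q, V_p \subset M$ (or $\subset X$) of the points $q$ and $p$, symplectomorphic to $E_\sk = E(\eps, L, \ldots, L)$, with $V_q$ carrying the constraint $\T^m q$ and $V_p$ carrying the multibranched constraint $(\T^{m_1}p, \ldots, \T^{m_b}p)$, so that the relevant count is $\langle \Phi_{M \setminus (V_q \cup V_p)}(\cdots), \Gamma_q \otimes \Gamma_p\rangle$ where $\Gamma_q = \alpha_{m+1}$ and $\Gamma_p = \alpha_{m_1+1} \odot \cdots \odot \alpha_{m_b+1}$, weighted by the appropriate $T$-powers and combinatorial factors $\mu$. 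Then I would enclose both of these in a single larger skinny ellipsoidal neighborhood $V \supset V_q \cup V_p$, and stretch the neck along $\bdy V$. By SFT compactness (\S\ref{subsec:rsft_formalism}, and the compactness theorem of \cite{compactness_results_in_SFT}), the left-hand-side count decomposes as a sum over limiting buildings, each consisting of a piece in $M \setminus V$ (or $X \setminus V$, which is what computes the augmentation on the outside), a middle piece in the cobordism $V \setminus (V_q \cup V_p)$, and fixed planes inside $V_q$ and $V_p$ realizing the tangency constraints.

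The heart of the argument is the classification of the index-zero rational curves in the cobordism $V \setminus (V_q \cup V_p)$ with a single positive end on $\bdy V$ and the prescribed negative ends $\alpha_{m+1}$ on $\bdy V_q$, $\alpha_{m_1+1}, \ldots, \alpha_{m_b+1}$ on $\bdy V_p$. Since $V, V_q, V_p$ are all skinny ellipsoids with the short simple orbit $\alpha_1$ dominating, the argument of Proposition~\ref{prop:compute_T_aug} and Proposition~\ref{prop:tangency_stab} applies: all such curves reduce to Hurwitz covers of $\CP^1$ (the $B^2(\eps)$ slice). A connected rational curve with one positive end $\alpha_N$ over $\bdy V$ and negative ends that are iterates of $\alpha_1$ summing (in covering multiplicity) to $N$ must have $N = m+1 + m_1+1 + \cdots + m_b + 1$ minus a correction, and the only configurations of index zero are: (i) the disjoint union of a cylinder from $\alpha_{m+1}$-on-$\bdy V$ to $\alpha_{m+1}$-on-$\bdy V_q$ with a genus-zero curve from $\alpha_{M}$-on-$\bdy V$ to $\Gamma_p$ on $\bdy V_p$; this is the term $\gw_{M,A}\lll (\T^m p, \T^{m_1}p, \ldots, \T^{m_b}p), -\rrr$, reflecting that the outer curve now sees a single point $p$ with a branch of tangency order $m$ plus the $b$ branches; or (ii) a connected curve with one $\alpha_M$ positive end on $\bdy V$ whose negative ends combine one $\alpha_{m+1}$-end over $\bdy V_q$ with the $b$ ends over $\bdy V_p$ in such a way that the $\alpha_{m+1}$ end merges with the $i$th end $\alpha_{m_i+1}$ into a single branch of tangency order $m_i + m$; summing over which branch it merges with gives $\sum_{i=1}^b \gw_{M,A}\lll (\T^{m_1}p, \ldots, \T^{m_i+m}p, \ldots, \T^{m_b}p), -\rrr$. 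I would carry out this combinatorial bookkeeping by tracking covering multiplicities and the Fredholm index formula $\ind = 2k - 2$ for curves in a skinny ellipsoid with $k$ positive ends (as in Proposition~\ref{prop:compute_T_aug}), which forces the middle-cobordism piece to be either the two-component configuration or a pair-of-pants-like connected piece, and nothing else of index zero.

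The main obstacle will be making rigorous the claim that no other buildings contribute --- in particular ruling out extra levels in the symplectization $\R \times \bdy V$, extra negative ends of the middle piece that are long orbits (not iterates of $\alpha_1$), and ghost/multiply-covered configurations of nonpositive virtual index. For the long-orbit issue I would invoke, as in the proof of Proposition~\ref{prop:compute_T_aug}, that taking $L \gg \eps$ pushes the Conley--Zehnder indices of all non-$\alpha_1$-iterate orbits arbitrarily high, so any end on such an orbit makes the index strictly positive; for the symplectization levels I would use the standard fact that an index-zero building in the stretched cobordism cannot have a nontrivial symplectization level of positive index while still being rigid, together with the absence of negative-index somewhere-injective curves for generic $J$ (invoking the virtual framework of Disclaimer~\ref{disclaimer_from_intro} only for the bookkeeping of covers). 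A secondary subtlety is matching the combinatorial weights --- the factors $\mu_{\Gamma}$, $\kappa_{\Gamma}$, and $b!$-type orderings --- across the neck so that the coefficients on the two sides agree exactly; I would handle this by noting, as in \S\ref{subsec:blow-up} and \S\ref{subsec:extra_negative_ends}, that these are precisely the factors that convert ordered-marked-point counts to unordered ones, and that gluing at a pair of $\alpha_1^{\kappa}$-ends contributes the matching $1/\kappa$. Finally, the $\Li$-homotopy version for a filling $X$ follows by running the same neck-stretching argument $t$-parametrically (exactly as invariance of $\cha$ is proved in \S\ref{subsubsec:Li_str_on_CHA} via \cite[Thm 2.4.5]{EGH2000}), producing a filtered $\Li$ homotopy between the two sides rather than an equality of numbers.
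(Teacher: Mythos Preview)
Your overall strategy matches the paper's: the paper itself gives only a sketch (surround $V_q,V_p$ by a larger skinny ellipsoid $V$, stretch the neck along $\bdy V$, and classify rigid curves in $V\setminus(V_q\cup V_p)$), explicitly deferring the details to \cite{enumerating}. So on the level of ``which approach'', you are aligned with the paper.

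However, your enumeration of the index-zero pieces in $V\setminus(V_q\cup V_p)$ is not correct, and this is where the real content lies. In your case (i) you describe the inner piece as a cylinder together with a single connected curve with one positive end $\alpha_M$ and all $b$ negative ends on $\bdy V_p$. That inner piece has only \emph{two} positive ends on $\bdy V$, so the outer curve sees two branches $(\T^m p,\T^{M-1}p)$, not $(\T^m p,\T^{m_1}p,\ldots,\T^{m_b}p)$. The configuration that actually produces the $(b+1)$-branch term is the union of $b+1$ cylinders, one for each prescribed negative end. In your case (ii) you describe a single connected component carrying \emph{all} $b+1$ negative ends and then ``sum over which $i$ it merges with''; but a fixed such connected curve gives one term, not $b$. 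The configurations that yield the summands are a pair-of-pants with positive end $\alpha_{(m+1)+(m_i+1)}$ and negative ends $\alpha_{m+1}$ on $\bdy V_q$ and $\alpha_{m_i+1}$ on $\bdy V_p$, together with $b-1$ cylinders for the remaining $p$-branches; the sum over $i$ comes from choosing which $p$-branch sits on the pants.

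There is also a genuine omission: you do not explain why these are the \emph{only} rigid configurations, nor why each counts with coefficient one. For instance, a connected index-zero curve with one positive end and three or more negative ends (one on $\bdy V_q$, several on $\bdy V_p$) satisfies your index bookkeeping $2s^+-2+2(\sum k^+ - \sum k^-)=0$ just as well; ruling these out (or showing they contribute zero) is exactly the Hurwitz-type count that the paper offloads to \cite{enumerating}. Your appeal to ``$\ind=2k-2$ forces $k=1$'' from Proposition~\ref{prop:compute_T_aug} controls the number of \emph{positive} ends per component, but not the partition of negative ends among components, so it does not by itself pin down the two families above. Matching the $\mu_\Gamma$, $\kappa_\Gamma$ weights then also has to be done against the correct list of configurations, not the one you wrote.
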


Note that in particular we can reduce all constraints at different points to a positive linear combination of constraints at a single point.
Moreover, by applying this formula recursively in a somewhat clever way, we can reduce invariants with tangency constraints to linear combinations of invariants without tangency constraints (but possibly with constraints at multiple points).
A recursive algorithm for doing this is given in \cite{enumerating}.
For example, this technique can be applied to recursively compute the invariants $\gw_{\CP^2,d[L]}\lll \T^{3d-2}p\rrr \in \Q$ in terms of Gromov--Witten invariants of blowups of $\CP^2$ at finitely many points,
which in turn are readily computable (c.f. \cite{Gottsche-Pandharipande}). 
Putting $T_d := \gw_{\CP^2,d[L]}\lll \T^{3d-2}p\rrr$, the first few values we are:
\begin{align*}
T_1 = 1, \;\;\;\;\; T_2 = 1, \;\;\;\;\; T_3 = 4,\;\;\;\;\; T_4 = 26,\;\;\;\;\; T_5 = 217,\;\;\;\;\; T_6= 2110,
\end{align*}
etc.
Strictly speaking, in order to apply the upper bound in \S\ref{subsubsec:gw_upper_bound} we only need to know that these numbers are nonzero,
which follows from \cite[Corollary 4.1.3]{enumerating}.

\section{Construction of capacities}\label{sec:construction_of_capacities}

\subsection{The general construction}\label{subsec:the_general_construction}

We now construct symplectic capacities using the $\Li$ augmentations from the previous section. 
For notational simplicity we will assume that all domains under discussion are connected.\footnote{Note that if $X$ is say  disconnected symplectic filling, then there are for example several different $\Li$ augmentations $\aug\lll \T^m p\rrr\chlin(X) \rightarrow \Lamo$, depending on which component of $X$ the point $p$ lies in. The freedom to choose points in different components plays a role in the disjoint union lower bound discussed in \S\ref{subsubsec:disjoint_union_lb} below.}
Firstly, we have the ones based on the $\Li$ structure on linearized contact homology.
Given $(X,\omega)$ a symplectic filling of $(Y,\alpha)$,
we have $\Li$ augmentations 
$$\auglin\lll \T^m p\rrr: \chlin(X) \rightarrow \Lamo$$ for $m \geq 0$.
We assemble these into a single $\Li$ homomorphism
$$ \auglin \lll \TT p \rrr: \chlin(X) \rightarrow \Lamo[t],$$
with the $t^m$ coefficient given by $\auglin\lll \T^m p \rrr$.
For $\bb \in \ovl{S}\K[t] \subset \ovl{S}\Lamo[t]$, put
$$ \gapac_\bb^{\leq l}(X) := \inf \{ a \;:\; T^a \bb = \wh{\auglin}\lll \TT p \rrr(x)\text{ for some } x \in \bar_{\leq l}\chlin(X)\text{ with } \wh{\ell}(x) = 0\}.$$
Recall here that $\wh{\ell}$ is the differential on the bar complex $\bar\chlin(X)$.
If $X$ is a Liouville domain, it is easy to check that this definition is equivalent to the one given in the introduction.
We also define the simplified capacities
\begin{align*}
\gapac^{\leq l} \lll \T^m p\rrr(X) &:= \inf \{a \;:\; T^a = \pi_1\circ\wh{\auglin}\lll \T^m p\rrr(x)\text{ for some } x \in \bar_{\leq l}\chlin(X)\text{ with } \wh{\ell}(x) = 0\}\\
&= \inf_{\bb \in \ovl{S}\K[t] \;:\; \pi_1(\bb) = t^m} \gapac_{\bb}^{\leq l}(X),
\end{align*}
where $\pi_1$ denotes the projection $\ovl{S}\K[t] \rightarrow \K[t]$ to the word length one subspace.

\begin{remark}
If we are interested in obstructing symplectic embeddings between two symplectic ellipsoids, it turns out that word length refined capacities $\gapac_{\bb}^{\leq l}$ provide no extra data beyond the capacities $\gapac_{\bb}$.
Indeed, for $X$ an ellipsoid we have $\gapac_{\bb}^{\leq l}(X) = \gapac_{\bb}(X)$ if $\bb \in \ovl{S}_{\leq l}\K[t]$, and $\gapac_{\bb}^{\leq l}(X) = 0$ otherwise. This  follows from the fact that (working over $\K$) the map $\bar \chlin(X) \ra \bar \K$ induced by $\auglin\lll \T^m p\rrr$ is an isomorphism at chain level, and hence there is a unique element in $\bar \chlin(X)$ mapping to any given $\bb \in \ovl{S}\K[t]$.

On the other hand, in some cases the capacities $\gapac_{\bb}^{\leq l}$ do provide stronger obstructions.
As a simple example, if we consider exact symplectic embeddings $D^*S^1 \hookrightarrow B^2$, the capacities $\gapac_{\bb}$ give the volume obstruction, whereas the capacities $\gapac_{\bb}^{\leq 1}$ show that no such embedding exists (note that there $\gapac_\bb^{\leq 1}(D^*S^1) = \infty$ since there are no contractible Reeb orbits).
\end{remark}

We also define the capacities based on the $\Li$ structure on the full contact homology algebra as follows.
We assemble the $\Li$ augmentations
$$\aug \lll \T^{m_1}p,...,\T^{m_b}p\rrr: \cha(Y) \rightarrow \Lamo$$
into a single $\Li$ homomorphism
$$ \aug\lll \TT p,...,\TT p\rrr: \cha(Y) \rightarrow S\Lamo[t],$$
where the coefficient of $t^{m_1} \odot ... \odot t^{m_b}$ is given by 
$\aug\lll \T^{m_1}p,...,\T^{m_b}p\rrr$.
The corresponding map on bar complexes is
$$\wh{\aug}\lll \TT p,...,\TT p\rrr: \bar\cha(Y) \rightarrow \ovl{S}S\Lamo[t].$$
For $\bb \in \ovl{S}S\K[t] \subset \ovl{S}S\Lamo[t]$, we define the capacity
$$\rapac_\bb^{\leq l}\lll \TT p,...,\TT p\rrr(X) \in \R_{> 0}$$ 
by
\begin{align*}
\inf \{ a \;:\; T^a \bb = \wh{\aug}\lll \TT p,...,\TT p \rrr(x)\text{ for some } x \in \bar_{\leq l}\cha(Y)\text{ with } \wh{\ell}(x) = 0\}.
\end{align*}
We also define the simplified capacities 
\begin{align}\label{eq:rapac_leql}
\rapac^{\leq l}\lll \T^{m_1}p,...,\T^{m_b}p\rrr :=  \inf_{\bb \in \ovl{S}S\K[t] \;:\; \pi_1(\bb) = t^{m_1}\odot ...\odot t^{m_b}} \rapac^{\leq l}_{\bb}(X),
\end{align}
where now $\pi_1$ denotes the projection $\ovl{S}S\K[t] \rightarrow S\K[t]$.

\begin{remark}
We can also define capacities using the $\Li$ augmentations involving several points in the target space
$$ \aug\lll (\T^{m^1_1}p_1,...,\T^{m^1_{b_1}}p_1) ,..., (\T^{m^r_1}p_r,...,\T^{m^r_{b_r}}p_r) \rrr: \cha(Y) \rightarrow S\Lamo[t].$$
By the discussion in \S\ref{subsec:ppt}, in dimension four these capacities with $r > 1$ can all be related to ones with $r = 1$.
\end{remark}

\subsection{Properties}\label{subsec:properties}

\subsubsection{Monotonicity}

It follows from the basic functoriality properties discussed in \S\ref{subsec:rsft_formalism} and \S\ref{sec:geometric_constraints} that all of the above capacities are monotone with respect to exact symplectic embeddings.
Indeed, by working on the level of bar complexes this follows from simple diagram chases as explained in the introduction.
Less obviously, these capacities are all monotone with respect to non-exact symplectic embeddings, provided we omit the word length refinement ``$\leq l$''.
Indeed, let $X,X'$ be symplectic fillings of $Y,Y'$ respectively, and suppose we have a symplectic embedding $X \hookrightarrow X'$.
According to \S\ref{sec:CL}, there is a Maurer--Cartan element
$\cl \in \cha(Y)$, and an $\Li$ homomorphism
$\Phi: \cha(Y') \rightarrow \cha_{\cl}(Y).$
Let $\aug_{Y}\lll-\rrr: \cha(Y) \rightarrow \Lamo$ and $\aug_{Y'}\lll -\rrr:\cha(Y') \rightarrow \Lamo$
denote any one of the augmentations from \S\ref{sec:geometric_constraints}, and let $\rapac\lll-\rrr$ denote the corresponding simplified capacity as in \eqref{eq:rapac_leql} (with $l = \infty$). 
We can deform $\aug_{Y}\lll-\rrr$ by $\cl$ to define an $\Li$ augmentation
$$ \aug_{Y,\cl}\lll-\rrr: \cha_{\cl}(Y) \rightarrow \Lamo$$
such that the composition of $\Phi$ and $\aug_{Y,\cl}\lll - \rrr$ is $\Li$ homotopic to $\aug_{Y'}\lll-\rrr$.
Suppose that we have $\rapac\lll-\rrr(X') = a$,
so that we have $x \in \bar\cha(Y')$ satisfying $\ell(x) = 0$ and $T^a = \pi_1\wh{\aug_{Y'}}\lll-\rrr(x)$.
We then have 
$$ 0 = \wh{\Phi}\wh{\ell}(x) = \wh{\ell_\cl}\wh{\Phi}(x) = \wh{\ell}(\wh{\Phi}(x)\odot \exp(\cl))$$
and 
$$T^a = \pi_1\wh{\aug_{Y'}}\lll-\rrr(x) = \pi_1\wh{\aug_{Y,\cl}}\lll - \rrr \wh{\Phi}(x) = \pi_1\wh{\aug_Y}\lll-\rrr(\wh{\Phi}(x) \odot \exp(\cl)).$$
In other words, the element $\wh{\Phi}(x) \odot \exp(\cl)$ exhibits the upper bound $\rapac\lll-\rrr(X) \leq a$. 
From this non-exact monotonicity for the corresponding capacity $\rapac\lll - \rrr$ follows, but observe that $\wh{\Phi}(x) \odot \exp(\cl) \in \bar\cha(Y)$ need not lie in the same word-filtration level as $x \in \bar\cha(Y')$.
Similar reasoning applies to the capacities $\rapac_{\bb}$ and the linearized versions.

\subsubsection{Behavior under stabilization}\label{subsubsec:behavior_under_stab}

\begin{prop}\label{prop:stabilization_for_gapac}
For any Liouville domain $(X,\omega)$ satisfying the index positivity Assumption~\ref{assump:index_pos},
we have
$$\gapac_{\bb}^{\leq l}(X \times B^2(S)) = \gapac_{\bb}^{\leq l}(X)$$
for any $S \geq \gapac_{\bb}^{\leq l}(X)$.
\end{prop}
\NI Here $\gapac_{\bb}^{\leq l}(X \times B^2(S))$ is by definition the supremum of $\gapac_{\bb}^{\leq l}(X')$ over all Liouville domains $X'$ admitting an exact symplectic embedding into $X \times B^2(S)$.
\begin{proof}
It suffices to establish 
\begin{align}\label{eq:stab_for_smx}
\gapac_{\bb}^{\leq l}(X \smx B^2(S)) = \gapac_\bb^{\leq l}(X)
\end{align}
 for $X \smx B^2(S)$ any smoothing of $X \times B^2(S)$ as in \S\ref{subsec:stabilization}, since there are smoothings of this form which are arbitrarily $C^0$ close to $X \times B^2(S)$.
But \eqref{eq:stab_for_smx} follows immediately from Proposition~\ref{prop:Linf_operations_stabilize} and Proposition~\ref{prop:aug_map_stab}, which together show that the data used to define $\gapac_{\bb}^{\leq l}(X)$ and $\gapac_{\bb}^{\leq l}(X \smx B^2(S))$ coincide for inputs of total action less than $S$.
By the definition of $\gapac_\bb$ as an infimum, it is easy to see that this discrepancy is irrelevant if we have $S \geq \gapac_\bb^{\leq l}(X)$.
\end{proof}

\begin{remark}
We do not know whether there is a natural analogue of Proposition \ref{prop:stabilization_for_gapac} for the capacities $\rapac \lll p_1,...,p_r \rrr$ constructed by Hutchings.
Notice that if $X$ has dimension $2n$, adding a point constraint is a codimension $2n-2$ condition, which is dimension dependent.
For example, for $n = 2$ the $\Li$ augmentations $\auglin\lll \T^4 p \rrr$ 
and $\auglin\lll p_1,...,p_5\rrr$ have the same degree, whereas for $n = 3$ the augmentations $\auglin \lll\T^4 p \rrr$ and $\auglin \lll p_1,p_2,p_3\rrr$ have the same degree.
\end{remark}

\subsubsection{Upper bounds from closed curves}\label{subsubsec:gw_upper_bound}

Let $(X,\omega)$ be a symplectic filling of $(Y,\alpha)$, and assume that $H_1(Y;\Z) = H_2(Y;\Z) = H_2(X) = 0$.
Let $(M,\beta)$ a closed symplectic manifold, and suppose there is a symplectic embedding $X \hookrightarrow M$.
By the discussion in \S\ref{sec:CL}, for each $A \in H_2(M;\Z)$ we have an element
$\exp(\cl)_A \in \bar\cha(Y)$ satisfying $\wh{\ell}(\exp(\cl)_A) = 0$ and its image
$$\pi_1\wh{\aug}\lll (\T^{m^1_1}p_1,...,\T^{m^1_{b_1}}p_1) ,..., (\T^{m^r_1}p_r,...,\T^{m^r_{b_r}}p_r) \rrr (\exp(\cl)_A)$$
agrees with the corresponding closed curve count
$$T^{[\omega]\cdot A}\gw_{M,A}\lll(\T^{m^1_1}p_1,...,\T^{m^1_{b_1}}p_1) ,..., (\T^{m^r_1}p_r,...,\T^{m^r_{b_r}}p_r)\rrr \in \Q.$$
Provided that this latter count is nonzero, this immediately implies the upper bound
$$\rapac \lll(\T^{m^1_1}p_1,...,\T^{m^1_{b_1}}p_1) ,..., (\T^{m^r_1}p_r,...,\T^{m^r_{b_r}}p_r) \rrr(X) \leq [\omega] \cdot A.$$
Similarly, we have $\exp(\cllin)_A \in \bar\chlin(X)$ satisfying $\wh{\ell}(\exp(\cllin)_A) = 0$
and 
$$ \pi_1 \wh{\auglin}\lll \T^m p\rrr(\exp(\cllin)_A) =T^{[\omega]\cdot A} \gw_{M,A},$$
which gives the upper bound
$$\gapac \lll\T^m p\rrr(X) \leq [\omega]\cdot A$$
provided that $\gw_{M,A}$ is nonzero.

\sss

For example, in the case that $M = \CP^2(a)$, i.e. $\CP^2$ with its standard Fubini-Study symplectic form normalized so that a line has area $a$,
we have 
\begin{align*}
&\rapac\lll p_1,...,p_{3d-1}\rrr(X) \leq da\\
 &\gapac\lll \T^{3d-2}p\rrr(X) \leq da
 \end{align*}
 for any symplectic filling $X$ admitting a symplectic embedding $X \hooksymp \CP^2(a)$.
 Similarly, in the case $M = \CP^1(a) \times \CP^1(a')$, i.e. $\CP^1 \times \CP^1$ with the symplectic form chosen such that the $\CP^1 \times \{pt\}$ has area $a$ and $\{pt\} \times \CP^1$ has area $a'$,
 we have
 \begin{align*}
&\rapac\lll p_1,...,p_{2d+1}\rrr(X) \leq a' + da\\
 &\gapac\lll \T^{2d}p\rrr(X) \leq a' + da
 \end{align*}
whenever there is a symplectic embedding $X \hooksymp \CP^1(a) \times \CP^1(a')$.
Here we are using the fact that the local tangency Gromov--Witten invariants $\gw_{\CP^2,d[L]}\lll \T^{3d-2}p\rrr$ and $\gw_{\CP^1 \times \CP^1,d[L_1]+[L_2]}\lll \T^{2d}p\rrr$ are nontrivial for all $d \in \Z_{\geq 1}$ (see Corollary 4.1.3 and Example 5.1.4 in \cite{enumerating}), while nonvanishing of the standard Gromov--Witten invariants $\gw_{\CP^2,d[L]}\lll p_1,\dots,p_{3d-1}\rrr$ and $\gw_{\CP^1 \times \CP^1,d[L_1]+[L_2]}\lll p_1,\dots,p_{2d+1}\rrr$ is well-known.

\subsubsection{Disjoint union lower bound}\label{subsubsec:disjoint_union_lb}

As observed by Hutchings, the capacities $\rapac \lll p_1,...,p_r\rrr$ 
satisfy a disjoint union lower bound, similar to the case of ECH capacities.
Consider points $p_1,...,p_r \in X$ which are pairwise distinct.
Suppose that $X^{2n}_1,...,X^{2n}_l$, $l \geq 1$, are symplectic fillings, with a symplectic embedding of the disjoint union 
$$X_1 \sqcup ... \sqcup X_l \hookrightarrow X.$$
Put $Y = \bdy X$ and $Y_i = \bdy X_i$ for $i = 1,...,l$.
Since the $\Li$ augmentation $\aug\lll p_1,...,p_r\rrr: \cha(Y) \rightarrow \Lamo$ 
does not depend on the locations of the points $p_1,...,p_r \in X$ up to $\Li$ homotopy, we can distribute them as we like amongst the subdomains $X_1,...,X_l$.
Given a partition $r = j_1 + ... + j_l$ with $j_1,...,j_l \geq 1$, let us assume that exactly $j_i$ of the points lie in $X_i$. 
More specifically, let us suppose that we have a partition
$$\{p_1,...,p_r\} = \{p_1^1,...,p_{j_1}^1\} \sqcup ... \sqcup \{p_1^l,...,p^l_{j_l}\},$$
where $p^i_1,...,p^i_{j_i}$ denote the points lying in $X_i$.
By stretching the neck along $Y_1 \sqcup ... \sqcup Y_l$, we find that the following relation holds up to $\Li$ homotopy:
$$ \aug_{X}\lll p_1,...,p_r \rrr = \otimes_{i=1}^l \left(\aug_{X_i}\lll p^i_1,...,p^i_{j_i}\rrr\right) \circ \Phi,$$
where $\Phi: \cha(Y) \rightarrow \otimes_{i=1}^l \cha(Y_j)$ denotes the $\Li$ cobordism map associated to $X \setminus (X_1 \sqcup ... \sqcup X_l)$.
This relation implies the inequality
$$ \rapac\lll p_1,...,p_r \rrr(X) \geq \sum_{i=1}^l \rapac \lll p^i_1,...,p^i_{j_i}\rrr(X).$$
Since the points $p_1,...,p_r$ are all distinct (assuming that $X$ is connected), their precise locations are irrelevant, and we can introduce the shorthand
$$\rapac_r(X) := \rapac \lll p_1,...,p_r\rrr.$$
Maximizing the lower bound over all partitions, we have
$$\rapac_r (X) \geq \max_{j_1+...+j_l = r}\sum_{i=1}^l \rapac_{j_i}(X_i).$$

More generally, consider the capacity 
$$\rapac\lll (\T^{m^1_1}p_1,...,\T^{m^1_{b_1}}p_1) ,..., (\T^{m^r_1}p_r,...,\T^{m^r_{b_r}}p_r) \rrr(X).$$
By distributing the points $p_1,...,p_r$ amongst the subdomains $X_1,...,X_l$ as above, we get a lower bound of the form
$$ \sum_{i=1}^l  \rapac\lll (\T^{-}p^i_1,...,\T^{-}p^i_1),...,(\T^{-}p^i_{j_i},...,\T^{-}p^i_{j_i})\rrr(X_i),$$
where $(\T^{-}p^i_j,...,\T^{-}p^i_j)$ corresponds to $(\T^{m^c_1}p_c,...,\T^{m^c_{b_c}}p_c)$ if $p^i_j$ is the $c$th point.

\sss

Observe that in particular any ball packing of $X^{2n}$ induces a lower bound for the capacities of $X$.
For example, in the case $2n = 4$, this lower bound can be made explicit using the computations for $B^4$ given in \S\ref{subsec:first_computations} below.
Namely, suppose that $X^4$ admits a packing by balls $B^4(a_1),...,B^4(a_l)$ with areas $a_1 \geq ... \geq a_l$. Then we get the following lower bounds:
\begin{align*}
&\gapac \lll \T^{k-1}p \rrr(X) \geq  a_1\lceil \tfrac{k+1}{3}\rceil\\
&\rapac_k(X) \geq \max_{j_1 + ... + j_a = k}\sum_{i=1}^l a_i \lceil \tfrac{j_i+1}{3}\rceil\\
&\rapac \lll \underbrace{p,...,p}_k \rrr(X) \geq k a_1.
\end{align*}
Notice that these three capacities all count curves satisfying constraints of the same codimension, but their lower bounds get progressively stronger.

As an important special case, the ellipsoid $E(1,x)$ for $x$ rational admits a canonical ball packing $\sqcup_{i=1}^l B^4(a_i) \hookrightarrow E(1,x)$ with $a_1 \geq ... \geq a_l$ related to the continued fraction expansion of $x$ (see \cite{Schlenk_old_and_new} for details). 
This decomposition plays a central role in the solution of the four-dimensional ellipsoid embedding problem.
For example, for $x = 55/8$ we have a packing of $E(1,55/8)$ by $6$ balls of area $1$, $1$ ball of area $7/8$, and $7$ balls of area $1/8$.

\subsection{First computations and applications}\label{subsec:first_computations}

We end this paper by giving the computations stated in the introduction. 

\begin{prop}\label{prop:rapac_of_ball}
For $r \geq 1$, we have $\rapac\lll p_1,...,p_r \rrr(B^4)= \lceil \tfrac{r+1}{3}\rceil$.
\end{prop}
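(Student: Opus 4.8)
The plan is to compute both an upper and a lower bound for $\rapac\lll p_1,\dots,p_r\rrr(B^4)$ and show they agree at $\lceil \tfrac{r+1}{3}\rceil$.

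\textbf{Upper bound.} First I would invoke the closed-curve upper bound of \S\ref{subsubsec:gw_upper_bound}. Observe that $B^4$ embeds symplectically (after scaling) into $\CP^2$ with the Fubini--Study form normalized so that a line has area $a$ for any $a > 1$; equivalently, $B^4(1)$ sits inside $\CP^2$ with line area slightly more than $1$. Through $3d-1$ generic points of $\CP^2$ there passes a nonzero count of rational curves in the class $d[L]$ (the classical fact $\gw_{\CP^2,d[L]}\lll p_1,\dots,p_{3d-1}\rrr \neq 0$, e.g.\ $1,1,12,\dots$ for $d=1,2,3,\dots$). Hence $\rapac\lll p_1,\dots,p_{3d-1}\rrr(B^4) \leq d$. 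Since $\rapac_r$ is clearly nondecreasing in $r$ (a constraint through $r$ points restricts more than one through $r-1$; formally, forgetting a point constraint is a cobordism map or one uses monotonicity under the identity of the filling with an extra marked point constraint), for general $r$ with $3(d-1) < r \leq 3d-1$, i.e.\ $d = \lceil \tfrac{r+1}{3}\rceil$, we get $\rapac\lll p_1,\dots,p_r\rrr(B^4) \leq d = \lceil\tfrac{r+1}{3}\rceil$.

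\textbf{Lower bound.} This is the main obstacle. I would use the Reeb-orbit/action formalism: $\rapac\lll p_1,\dots,p_r\rrr(B^4)$ is realized by some cycle $x \in \bar\cha(\bdy B^4)$ with $\wh\ell(x)=0$ whose image under $\wh\aug\lll p_1,\dots,p_r\rrr$ has a word-length-one component with nonzero coefficient in the Novikov variable $T^a$, and the action $a$ of any such $x$ is bounded below by (a sum of) actions of Reeb orbits of $\bdy B^4$. Concretely, perturb $B^4$ to an irrational ellipsoid $E(1,1+\delta)$; its Reeb orbits $\alpha_1,\alpha_2,\alpha_3,\dots$ (in order of increasing action $\approx 1,1,2,2,3,\dots$, with $\cz(\alpha_k) = 3 + 2(k-1)$) generate $\cha$ with trivial $\Li$ differential for parity reasons. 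An index computation shows that a curve in $\wh{B^4}$ with one positive end $\alpha_k$ and passing through $r$ generic points is rigid precisely when the index formula $\ind = (n-3)(2 - s^+) + \cz(\alpha_k) + 2c_1\cdot A - r(2n-2) = 0$ holds with $n=2$, $s^+ = 1$: that is $-1 + \cz(\alpha_k) + 2c_1\cdot A - 2r = 0$. Combining this with positivity of energy and the action of $\alpha_k$ being roughly $\lceil k/2\rceil$, one extracts that the minimal action is $\lceil\tfrac{r+1}{3}\rceil$. The cleanest route is probably to reduce, via the ball-packing lower bound of \S\ref{subsubsec:disjoint_union_lb} applied trivially (single ball) together with the explicit ellipsoid/$\CP^2$ correspondence, to the statement that the capacity is \emph{exactly} the minimal $d$ with $3d - 1 \geq r$, matching the known ECH-capacity computation for $B^4$; indeed $\rapac_r(B^4)$ should coincide with (a normalization of) $\capac^{\op{ECH}}_{r}(B^4)$, whose value $\lceil \tfrac{r+1}{3}\rceil$ in this regime follows from McDuff's combinatorial formula, and the lower bound there is an obstruction-bundle/ECH index argument.

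\textbf{Putting it together.} With the upper bound $\leq \lceil\tfrac{r+1}{3}\rceil$ from nonvanishing $\CP^2$ Gromov--Witten invariants and the matching lower bound from the action/index analysis (or the ECH comparison), we conclude $\rapac\lll p_1,\dots,p_r\rrr(B^4) = \lceil\tfrac{r+1}{3}\rceil$. I expect the genuinely delicate point to be justifying that no lower-action cycle in $\bar\cha(\bdy B^4)$ can map onto the required class under $\wh\aug\lll p_1,\dots,p_r\rrr$ — i.e.\ the sharpness of the lower bound — since this requires controlling all rigid anchored configurations, not merely the somewhere-injective planes; here the index parity of the ellipsoid orbits (which forces the $\Li$ structure to be trivial and severely constrains which words can appear) is what makes the argument tractable, and I would lean on that heavily.
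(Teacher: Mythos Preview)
Your upper bound is correct and matches the paper's approach exactly: embed $B^4$ into $\CP^2$, invoke the nonvanishing of $\gw_{\CP^2,d[L]}\lll p_1,\dots,p_{3d-1}\rrr$ to get $\rapac_{3d-1}(B^4)\leq d$, and use monotonicity in $r$ to handle the intermediate values.

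The lower bound, however, has a genuine gap. Your index computation is carried out only for curves with a \emph{single} positive end ($s^+=1$), but the definition of $\rapac\lll p_1,\dots,p_r\rrr$ is in terms of the full contact homology algebra $\cha$, so the representing cycle may involve curves with arbitrarily many positive ends. Since the capacity is an infimum, to prove a lower bound you must bound the action of \emph{every} rigid configuration from below, not just the one-ended ones. Your subsequent appeals do not close this gap: the disjoint-union lower bound applied to a single ball is circular (it gives $\rapac_r(B^4)\geq\rapac_r(B^4)$), and the identification with ECH capacities is not established in the paper and would itself require a proof at least as hard as the one you are trying to give.

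The paper's argument for the lower bound is short and direct, and worth noting. After perturbing to $E(1,1+\delta)$, one observes that every Reeb orbit satisfies $\cz(\gamma)\leq 1+4\calA(\gamma)$, and that since each orbit has action at least $1$, a curve with $k$ positive ends has $k\leq\calA:=\sum_i\calA(\gamma_i)$. The index-zero condition for a connected rational curve with $r$ point constraints reads
\[
0=(n-3)(2-k)+\sum_{i=1}^k\cz(\gamma_i)-2r\leq -(2-k)+k+4\calA-2r\leq 6\calA-2-2r,
\]
giving $\calA\geq\tfrac{r+1}{3}$, hence $\calA\geq\lceil\tfrac{r+1}{3}\rceil$ by integrality. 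This handles all values of $k$ at once, which is exactly the step missing from your argument.
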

Note that $B^4$ actually has Morse--Bott Reeb dynamics, with simple Reeb orbits in $\bdy B^4$ corresponding to fibers of the Hopf fibration $S^3 \rightarrow \CP^1$. Alternatively, we make the Reeb orbits nondegenerate by slightly perturbing to an irrational ellipsoid $E(1,1+\delta)$ for $\delta > 0$ arbitrarily small.
After such a perturbation, there are precisely two simple Reeb orbits in $\bdy E(1,1+\delta)$, with actions $1$ and $1+\delta$ respectively.
Let $\alpha_1,\alpha_2,\alpha_3,...$ denote the iterates of the shorter simple Reeb orbit, and let $\beta_1,\beta_2,\beta_3,...$ denote the iterates of the longer simple Reeb orbit.
The orbits $\alpha_1,\alpha_2,\alpha_3,...$ have actions $1,2,3,...$ and Conley--Zehnder indices $3,7,11,...$,
while the orbits $\beta_1,\beta_2,\beta_3,...$ have actions $1+\delta,2+2\delta,3+3\delta,...$ and Conley--Zehnder indices $5,9,13,...$.
Recall that we are computing Conley--Zehnder indices with respect to a global trivialization $\tau_{\op{ex}}$ of the contact vector bundle (c.f. \cite[\S3.2]{enumerating}).

\begin{proof}
The lower bound $\rapac \lll p_1,...,p_r \rrr(B^4) \geq \lceil \tfrac{r+1}{3}\rceil$ follows easily by considering the actions and indices of Reeb orbits.
For any Reeb orbit $\gamma$ we have $\cz(\gamma) \leq 1 + 4\calA(\gamma)$.
The capacity $\rapac \lll p_1,...,p_r\rrr (B^4)$ is represented by a (possibly disconnected) rigid rational curve in $B^4$ with $r$ point constraints.
Consider a connected such curve with positive ends $\gamma_1,...,\gamma_k$. 
Note that $k$ is at most $\calA = \sum_{i=1}^k \calA(\gamma_i)$.
Index considerations then give
\begin{align*}
 0 &= (2-3)(2-k) + \sum_{i=1}^k \cz(\gamma_i) - 2r\\
 &\leq 6\calA -2 - 2r,
 \end{align*}
i.e. $\calA \geq \tfrac{r+1}{3}$.
Since $\calA$ is an integer (up to a factor of $\delta$), this gives 
$\calA \geq \lceil \tfrac{r+1}{3}\rceil$. 
The analysis for a disconnected curve is similar.

As for the upper bound $\rapac \lll p_1,...,p_r \rrr(B^4) \leq \lceil \tfrac{r+1}{3}\rceil$, since $\gw_{\CP^2,d[L]} \lll p_1,...,p_{3d-1} \rrr \neq 0$ for all $d \in \Z_{> 0}$, the upper bound from \S\ref{subsubsec:gw_upper_bound} reads 
$$ \rapac \lll p_1,...,p_{3d-1}\rrr(B^4) \leq d.$$
The upper bound for all other $r$ follows from the observation that $\rapac\lll p_1,...,p_r\rrr$ is nondecreasing with $r$
(c.f. Remark \ref{rmk:another_defn_of_point_aug}).
\end{proof}

\begin{prop}\label{prop:gapac_for_ball}
We have $\gapac\lll \T^{m-1} p\rrr(B^4) = \lceil \tfrac{m+1}{3}\rceil$
for all $m \geq 1$ congruent to $2$ mod $3$.
\end{prop}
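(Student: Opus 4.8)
The plan is to prove the identity $\gapac\lll \T^{m-1}p\rrr(B^4) = \lceil \tfrac{m+1}{3}\rceil$ for $m \equiv 2 \pmod 3$ by sandwiching with matching upper and lower bounds, exactly in the spirit of Proposition~\ref{prop:rapac_of_ball} but with the single tangency constraint $\lll \T^{m-1}p\rrr$ replacing the $r$-point constraint $\lll p_1,\dots,p_r\rrr$. As in that proof I would pass to the irrational perturbation $E(1,1+\delta)$, so that $\bdy B^4$ is replaced by a nondegenerate contact manifold with the two simple orbits $\alpha_1$ (action $1$, $\cz = 3$) and $\beta_1$ (action $1+\delta$, $\cz = 5$) and their iterates, all $\Li$ operations on $\chlin(B^4)$ being trivial for parity reasons. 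Recall the general index bound $\cz(\gamma) \leq 1 + 4\calA(\gamma)$, or more precisely $\cz(\alpha_k) = 4k-1$, $\cz(\beta_k) = 4k+1$.

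For the lower bound I would argue that $\gapac\lll\T^{m-1}p\rrr(B^4)$ is represented by a rigid rational curve in $B^4$ (possibly disconnected, but the point constraint $p_e$ together with the order-$(m-1)$ tangency lives on a single component; the tangency constraint $\lll\T^{m-1}p\rrr$ is codimension $(2n-2) + 2(m-1) = 2m$ in dimension four). For a connected component with positive ends $\gamma_1,\dots,\gamma_k$ carrying the constraint, rigidity forces
\begin{align*}
0 = (n-3)(2-k) + \sum_{i=1}^k \cz(\gamma_i) - (2n-2) - 2(m-1) = (k-2) + \sum_{i=1}^k \cz(\gamma_i) - 2m,
\end{align*}
so $\sum_{i=1}^k \cz(\gamma_i) = 2m + 2 - k$. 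Using $\cz(\gamma_i) \leq 1 + 4\calA(\gamma_i)$ and $k \leq \calA := \sum_i \calA(\gamma_i)$ gives $2m + 2 - k \leq k + 4\calA$, i.e. $\calA \geq \tfrac{2m+2-2k}{4} \geq \tfrac{m+1-\calA}{2}$ — wait, better to write $2m+2-k \le k+4\calA$ directly as $\calA \ge \tfrac{m+1-k}{2}$, which is weak; instead combine with $k \le \calA$ to get $2m+2 \le 2\calA + 4\calA = 6\calA$ only after noting $k \ge 1$ gives $\sum \cz(\gamma_i) \le 2m+1$, hence $\calA \ge \tfrac{m}{2}$... The cleanest route, which I would carry out carefully, is: $\sum_i \cz(\gamma_i) \le \sum_i (4\calA(\gamma_i) - 1) = 4\calA - k$ if all ends are $\alpha$-type, so $2m+2-k \le 4\calA - k$, giving $\calA \ge \tfrac{m+1}{2}$ — too strong; the honest bound uses $\cz(\gamma_i) \le 4\calA(\gamma_i) + 1$, yielding $2m+2-k \le 4\calA + k$, i.e. $\calA \ge \tfrac{m+1-k}{2} \ge \tfrac{m+1-\calA}{2}$, whence $3\calA \ge m+1$ and $\calA \ge \lceil \tfrac{m+1}{3}\rceil$ since $\calA$ is a positive integer (up to the $\delta$-correction). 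The disconnected case is handled as in Proposition~\ref{prop:rapac_of_ball}.

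For the upper bound I would invoke the closed-curve mechanism of \S\ref{subsubsec:gw_upper_bound} with $M = \CP^2$, line area $a$. Writing $m - 1 = 3d - 2$, i.e. $m = 3d-1$ (which is exactly the condition $m \equiv 2 \pmod 3$), the nonvanishing $\gw_{\CP^2,d[L]}\lll \T^{3d-2}p\rrr = T_d \neq 0$ — established via \S\ref{subsec:ppt} and cited from \cite[Corollary 4.1.3]{enumerating} — gives $\gapac\lll\T^{3d-2}p\rrr(B^4) \le d = \tfrac{m+1}{3} = \lceil \tfrac{m+1}{3}\rceil$ after normalizing $a = 1$ and using that $B^4$ embeds in $\CP^2$. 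Combined with the lower bound, this forces equality. The main obstacle is the careful bookkeeping of the index identity and the Conley--Zehnder inequalities in the lower bound — in particular making sure the rounding to $\lceil\tfrac{m+1}{3}\rceil$ is sharp and that the point constraint $p_e$ and the tangency constraint are correctly accounted as lying on one component — together with confirming that the nonvanishing input $T_d \neq 0$ is genuinely available; both of these are handled by results already in the excerpt, so the remaining work is essentially arithmetic.
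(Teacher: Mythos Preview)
Your approach is correct and essentially identical to the paper's: the lower bound is the same action/index estimate as in Proposition~\ref{prop:rapac_of_ball} (with the codimension $2m$ constraint replacing $2r$), and the upper bound comes from the Cieliebak--Latschev closed-curve bound of \S\ref{subsubsec:gw_upper_bound} applied to $\CP^2$ in degree $d$ with $m = 3d-1$, using $T_d = \gw_{\CP^2,d[L]}\lll \T^{3d-2}p\rrr \neq 0$. The only cosmetic issue is the meandering presentation of the index inequality---once cleaned up it reads exactly as $2m+2-k \leq k + 4\calA$ together with $k \leq \calA$, giving $\calA \geq \tfrac{m+1}{3}$; note also that for $\gapac$ (as opposed to $\rapac$) the relevant curve is automatically connected, so no disconnected analysis is needed.
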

\begin{proof}
The lower bound $\gapac \lll \T^{m-1}p\rrr(B^4) \geq \lceil \tfrac{m+1}{3}\rceil$ follows exactly as in the proof of Prop \ref{prop:rapac_of_ball}.
Similarly, if $m = 3d-1$ for some $d \in \Z_{> 0}$, $\gapac \lll \T^{m-1}p\rrr(B^4)$ follows from the upper bound from \S\ref{subsubsec:gw_upper_bound}, using the fact that $B^4$ symplectically embeds in $\CP^2$ and we have $\gw_{\CP^2,d[L]}\lll \T^{3d-2}p\rrr \neq 0$ for all $d \geq 1$ by \cite[Corollary 4.1.3]{enumerating}
\end{proof}

\begin{remark}
We expect the above computation to be valid for all $m \in \Z_{> 0}$, but this does not follow directly from the proof given, since it is not clear from our definition whether 
$\gapac \lll \T^m p\rrr$ is nondecreasing with $m$. 
\end{remark}

\begin{prop}\label{prop:ellipsoid_computation}
For $1 \leq m \leq x$, we have $\gapac \lll \T^{m-1}p\rrr(E(1,x)) = m$.
\end{prop}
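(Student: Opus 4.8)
The plan is to establish the two inequalities $\gapac\lll \T^{m-1}p\rrr(E(1,x)) \geq m$ and $\gapac\lll \T^{m-1}p\rrr(E(1,x)) \leq m$ separately, using the explicit Reeb dynamics of the ellipsoid together with the ball computation from Proposition~\ref{prop:rapac_of_ball}. First I would recall the structure of $\chlin(E(1,x))$ for $x$ irrational (or perturb $E(1,x)$ to an irrational ellipsoid $E(1+\delta_1, x+\delta_2)$ and pass to the limit): there is one generator in each degree $n-1+2k = 1+2k$ (for $n=2$), the $\Li$ operations vanish for degree-parity reasons, and the Reeb orbits have actions governed by the two axes. The point is that for $1 \le m \le x$ the lowest-action orbit whose curve count for $\auglin\lll \T^{m-1}p\rrr$ can be nonzero is precisely the $m$-th iterate of the short simple orbit, with action $m$ (up to $O(\delta)$). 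The lower bound should then come from an action/index estimate exactly parallel to the one in the proof of Proposition~\ref{prop:rapac_of_ball}: any rigid rational curve in $E(1,x)$ contributing to the capacity, with positive ends $\gamma_1,\dots,\gamma_k$, satisfies an index equation forcing $\sum_i \cz(\gamma_i)$ to be large, and translating Conley--Zehnder indices into actions via the ellipsoid's lattice-point description gives $\calA \geq m$; since the $\Li$ structure is trivial, the ``$\wh\ell(x)=0$'' condition is automatic and one just needs a single generator of the right action to realize the class $\bb = t^{m-1}$.

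For the upper bound I would use the inclusion $E(1,x) \hookrightarrow B^2(1)\times B^2(x)$ (or directly exploit that $\gapac\lll\T^{m-1}p\rrr$ is monotone and that $E(1,x)$ for integer $x$ admits a ball packing, via \S\ref{subsubsec:disjoint_union_lb} and \S\ref{subsubsec:gw_upper_bound}). Concretely, since the simple short Reeb orbit $\alpha_1$ of $\bdy E(1,x)$ has action close to $1$, its $m$-th iterate $\alpha_m$ has action close to $m$, and I want to exhibit an explicit rational curve in $\wh{E(1,x)}$ with positive end $\alpha_m$ passing through $p$ with an order $m-1$ tangency constraint and nonzero algebraic count. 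The cleanest route is to reformulate via \S\ref{subsec:extra_negative_ends}: replace a neighborhood of $p$ by a skinny ellipsoid $E_{\sk}$, use Proposition~\ref{prop:compute_T_aug} to identify $\auglin^1\lll \T^{m-1}p\rrr$ with the coefficient of $\alpha_m$ in the $\Li$ cobordism map $\Phi_\lin \colon \chlin(E(1,x)) \to \chlin(E_{\sk})$, and then compute this cobordism coefficient. For $m \le x$, one expects the relevant curve in $E(1,x) \setminus E_{\sk}$ to be a cylinder (or a simple branched cover of one) connecting $\alpha_m$ on the outer boundary to the appropriate iterate of the short orbit of $E_{\sk}$, whose count is $\pm 1$; this should follow from the standard fact that the linearized contact homology of an ellipsoid has all differentials and cobordism maps between ellipsoids determined by the combinatorics of lattice points, with the relevant moduli spaces consisting of (covers of) cylinders.

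Alternatively, and perhaps more robustly, for $x = d$ a positive integer I would use the symplectic embedding $E(1,d) \hookrightarrow \CP^2$ rescaled so that a line has area $d$ (note $E(1,d)$ has volume $d/2 = \tfrac12 \cdot d$, consistent with a degree-$1$ curve of area $d$ in $\CP^2$ only after the correct normalization — I should instead use the embedding of $E(1,d)$ into the ball $B^4(d)$ coming from the ball-packing/continued-fraction decomposition, followed by $B^4(d) \hookrightarrow \CP^2(d)$), and invoke the upper bound of \S\ref{subsubsec:gw_upper_bound} together with the nonvanishing $\gw_{\CP^2, \op{deg}=1}\lll \T^0 p\rrr \neq 0$ appropriately iterated; more simply, monotonicity $\gapac\lll\T^{m-1}p\rrr(E(1,x)) \le \gapac\lll\T^{m-1}p\rrr(B^4(\lceil m \rceil))$ combined with the ball computation and a rescaling argument should pin down the value, and the general real $x \ge m$ case then follows by monotonicity in $x$ squeezed between $E(1,\lceil x\rceil)$ and a slightly smaller integral ellipsoid, plus continuity/limiting in $\delta$. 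The main obstacle I anticipate is the upper bound: I need an \emph{honest nonvanishing} of the relevant curve count in $E(1,x)$ (not merely an abstract monotonicity bound that could be lossy), and proving that the moduli space $\ovl{\calM}_{E(1,x)}((\alpha_m);\nil)\lll \T^{m-1}p\rrr$ (or its skinny-ellipsoid reformulation) is nonempty and has algebraic count $\pm1$ requires either an explicit curve together with an automorphism/orientation computation, or a careful neck-stretching argument tracking how the known $\CP^2$ tangency invariants split off an $E(1,x)$-piece. Getting the combinatorial bookkeeping of Conley--Zehnder indices and the ``$m \le x$'' range exactly right in the index equation is the other delicate point, since for $m > x$ the short-orbit curve stops being the action-minimizer and a different (longer-axis) orbit takes over.
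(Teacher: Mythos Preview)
Your proposal has the two bounds backwards in difficulty, and in particular your lower bound argument does not work.

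For the lower bound, you propose an action/index estimate ``exactly parallel to the one in the proof of Proposition~\ref{prop:rapac_of_ball}.'' But that argument relies on the inequality $\cz(\gamma) \leq 1 + 4\calA(\gamma)$, which reflects the fact that the ball $B^4 \approx E(1,1)$ has two short orbits. For $E(1,x)$ with $m \leq x$, the short orbit iterates $\alpha_k$ (for $k \leq x$) satisfy only $\cz(\alpha_k) = 2k+1 = 1 + 2\calA(\alpha_k)$. Feeding this into the index equation for a curve with $k$ positive ends $\alpha_{i_1},\dots,\alpha_{i_k}$ and a $\T^{m-1}p$ constraint gives $\sum_j i_j = m + 1 - k$, so with $k \geq 2$ positive ends the action could be as low as $m-1$. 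Thus index/action bookkeeping alone does \emph{not} yield $\calA \geq m$; it only shows that a curve with a \emph{single} positive end has action at least $m$. The paper's proof makes exactly this observation and then rules out the multi-ended curves by a genuinely different mechanism: after reformulating the tangency constraint via the skinny ellipsoid as in \S\ref{subsec:extra_negative_ends}, it appeals to the relative adjunction formula and ECH writhe bounds (referencing \cite[Lemma 4.2.3]{enumerating}). This is the missing idea in your proposal.

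For the upper bound, the paper's argument is far simpler than any of your proposed routes: the short orbit $\alpha_1$ bounds a unique holomorphic plane through $p$, and its $m$-fold branched cover (branched at the preimage of $p$) has a single positive end $\alpha_m$ and satisfies the $\T^{m-1}p$ constraint directly. Your monotonicity approach $\gapac\lll\T^{m-1}p\rrr(E(1,x)) \leq \gapac\lll\T^{m-1}p\rrr(B^4(c))$ would require embedding $E(1,x)$ into a small ball, which goes the wrong way; and your $\CP^2$ route is overkill. Your skinny-ellipsoid cobordism idea is closer in spirit but still more elaborate than needed---the explicit branched cover already does the job.
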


\begin{proof}
We can assume without loss of generality that $x$ is irrational.
As before let $\alpha_1,\alpha_2,\alpha_3,...$ denote the iterates of the short simple Reeb orbit and $\beta_1,\beta_2,\beta_3,...$ denote the iterates of the long simple Reeb orbit.
The Reeb orbit $\alpha_1$ in $\bdy E(1,x)$ of minimal action has $\cz (\alpha_1) = 3$, and it bounds a unique holomorphic plane in $E(1,x)$ passing through $p$.
There is a unique $m$-fold cover of this plane with a branch point at the pre-image of $p$ and one positive end $\alpha_m$.
It then satisfies a $\T^{m-1}p$ condition, and it gives the upper bound 
$\gapac\lll \T^{m-1}\rrr (E(1,x)) \leq m$.

As for the lower bound $\gapac\lll \T^{m-1}\rrr(E(1,x)) \geq m$, note that any representative curve with energy less than $m$ would necessarily have at least two positive ends by index considerations. 
After replacing the tangency constraint with the skinny ellipsoidal constraint as in \S\ref{subsec:extra_negative_ends}, it is easy to rule out 
such curves using the relative adjunction formula and ECH writhe bounds \cite{Hlect}. See \cite[Lemma 4.2.3]{enumerating} for details.
\end{proof}

\begin{prop}\label{prop:gapac_poly_comp}
We have $\gapac \lll \T^{m-1}p\rrr(P(1,x)) = \min(m,x+\lceil \tfrac{m-1}{2}\rceil)$ for all odd $m \geq 1$.
\end{prop}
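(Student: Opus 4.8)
The claim is $\gapac\lll \T^{m-1}p\rrr(P(1,x)) = \min(m, x + \lceil\tfrac{m-1}{2}\rceil)$ for odd $m \geq 1$. As with the ellipsoid case (Proposition~\ref{prop:ellipsoid_computation}), the strategy is to establish the upper bound by exhibiting explicit rigid holomorphic curves in $\wh{P(1,x)}$ (or, via \S\ref{subsubsec:gw_upper_bound}, by invoking nonvanishing closed curve counts in a symplectic cap of $P(1,x)$), and to establish the matching lower bound by an index/action argument combined with the skinny-ellipsoid reformulation of \S\ref{subsec:extra_negative_ends} together with the relative adjunction formula and ECH writhe bounds. The key new feature compared to the ellipsoid is the appearance of the ``$\min$'': there are two competing families of optimal curves, one whose energy grows like $m$ (analogous to the multiply-covered plane through the short orbit) and one whose energy grows like $x + \lceil\tfrac{m-1}{2}\rceil$, coming from curves that use the geometry of the polydisk's two ``long'' directions. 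I would first normalize $x$ to be irrational (by a limiting/monotonicity argument, since both sides are continuous in $x$ and the capacity is monotone) and recall the Reeb dynamics of $\bdy P(1,x)$ after smoothing the corner: the relevant low-action orbits and their Conley--Zehnder indices with respect to the trivialization $\tau_{\op{ex}}$, as computed in the polydisk references.

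\textbf{Upper bound.} For the bound $\gapac\lll\T^{m-1}p\rrr(P(1,x)) \leq m$: since $P(1,x)$ contains a symplectically embedded ball $B^4(1)$, and any curve-count upper bound for the ball passes up through monotonicity the wrong way, I instead produce the curve directly. The minimal-action simple Reeb orbit $\alpha_1$ of $\bdy P(1,x)$ (the one of action $1$, coming from the first $B^2(1)$ factor) bounds a holomorphic plane through a generic point $p$; taking an $m$-fold branched cover totally ramified over the preimage of $p$ gives a plane with one positive end on $\alpha_m$ satisfying a $\T^{m-1}p$ constraint, of energy $m$. One checks this is rigid (index zero) and has nonzero algebraic count, exactly as in Proposition~\ref{prop:ellipsoid_computation}. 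For the bound $\gapac\lll\T^{m-1}p\rrr(P(1,x)) \leq x + \lceil\tfrac{m-1}{2}\rceil$: here I would use the $\gw$ upper bound from \S\ref{subsubsec:gw_upper_bound}. The polydisk $P(1,x)$ embeds into a product $\CP^1(1) \times \CP^1(x')$ (or a slight enlargement) for $x'$ slightly larger than $x$; in $\CP^1 \times \CP^1$ with the factors having areas $1$ and $x'$, one needs a nonzero count $\gw_{M,A}\lll \T^{m-1}p\rrr$ in a class $A = a[\CP^1\times pt] + b[pt\times\CP^1]$ with $[\omega]\cdot A = a x' + b$ minimal subject to the dimension constraint $2a + 2b - 2 = (2n-2) + 2(m-1) = 2 + 2(m-1)$, i.e. $a + b = m+1$. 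With $m$ odd, taking $b = \lceil\tfrac{m-1}{2}\rceil + 1$ and $a = \tfrac{m+1}{2}$ — or rather the correct split minimizing $ax' + b$ for the regime $x$ large versus small — gives the stated value after letting $x' \to x$. The nonvanishing of the relevant descendant/tangency Gromov--Witten numbers of $\CP^1\times\CP^1$ (or of $\CP^1$, reduced via the product formula) is the input I would cite or verify via the topological recursion relations and the ``pushing points together'' formula of \S\ref{subsec:ppt}; this bookkeeping of which $(a,b)$ gives the minimum, and checking the count is nonzero there, is where I expect the most care is needed.

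\textbf{Lower bound.} Any curve realizing $\gapac\lll\T^{m-1}p\rrr(P(1,x))$ is a rigid rational curve in $\wh{P(1,x)}$ through $p$ with a tangency constraint; replacing the local tangency constraint by a skinny-ellipsoid negative end as in \S\ref{subsec:extra_negative_ends}, it becomes a curve in $P(1,x) \setminus E_{\sk}$ with positive ends $\gamma_1,\dots,\gamma_k$ on $\bdy P(1,x)$ and one negative end on $\alpha_{m}$ of $\bdy E_{\sk}$, of index zero. If $k = 1$, the single positive end, together with the energy/action balance and the Conley--Zehnder indices of orbits of $\bdy P(1,x)$, forces the action to be at least $m$. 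If $k \geq 2$, one applies the relative adjunction formula and the ECH writhe bound (as in \cite[Lemma 4.2.3]{enumerating}) to control the writhe contributions of the multiple positive ends; the resulting inequality, combined with the action filtration on $\bdy P(1,x)$ whose orbits of action $\leq x$ are precisely the iterates of $\alpha_1$, yields energy $\geq x + \lceil\tfrac{m-1}{2}\rceil$ in the regime where multi-ended curves can be cheaper. Taking the better of the two cases gives $\geq \min(m, x + \lceil\tfrac{m-1}{2}\rceil)$.

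\textbf{Main obstacle.} The hard part is the lower bound in the multi-ended case: one must rule out, for energies below $\min(m, x+\lceil\tfrac{m-1}{2}\rceil)$, all index-zero multi-ended anchored configurations in $P(1,x)\setminus E_{\sk}$. The writhe/adjunction analysis is delicate because $\bdy P(1,x)$ (after corner-smoothing) has a richer Reeb spectrum than the ellipsoid — in particular there are orbits coming from the corner locus — and the ECH-type bounds must be applied to each somewhere-injective component and then combined. Establishing that the bound is sharp precisely at the odd values of $m$ (and understanding why the parity enters through the $\lceil\tfrac{m-1}{2}\rceil$) is exactly the subtle point, and I would expect to lean heavily on the detailed polydisk computations underlying \cite{enumerating} and on the ``pushing points together'' recursion to identify the extremal curves explicitly.
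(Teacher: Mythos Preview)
Your overall strategy is right, and the upper bounds match the paper (though for $\leq m$ the paper simply invokes monotonicity via the embedding $P(1,x) \hookrightarrow E(1+\delta,x')$ and Proposition~\ref{prop:ellipsoid_computation}; for $\leq x + \lceil\tfrac{m-1}{2}\rceil$ the paper uses the specific class $d[D_1] + [D_2]$ in $\CP^1(1)\times\CP^1(x)$ with $m = 2d+1$, citing nonvanishing of $\gw_{\CP^1\times\CP^1}\lll\T^{2d}p\rrr$).

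The lower bound, however, is organized differently in the paper, and your case division by number of ends $k$ misses the real dichotomy. The paper first writes down an explicit perturbation $\wt{P}(1,x)$ with Reeb orbits $\alpha_{i,j}$ (with $\cz = 2i+2j$) and $\beta_{i,j}$ (with $\cz = 1+2i+2j$) and actions $\approx i + jx$, and then splits according to whether \emph{all} positive ends are of the form $\beta_{i,0}$ or not. When at least one end has a nonzero $j$-component, a \emph{purely elementary} index/action computation (no adjunction, no writhe) already gives energy $\geq x + \lceil\tfrac{m-1}{2}\rceil$; this is the easy case, contrary to what you anticipate. The subtle case is when all ends are $\beta_{s_i,0}$: here index zero forces $\sum s_i = m+1-k$, so for $k\geq 2$ the energy would be \emph{strictly below} $m$, and one must show such curves do not exist. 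The paper does this not by writhe/adjunction but by a slice argument (as in the stabilization Addendum): with a suitable almost complex structure, any such curve is forced by positivity of intersection to lie in the two-dimensional slice $\wh{B^2(1)}\times\{0\}$, hence is a Hurwitz cover, forcing $\sum s_i = m$ and thus $k=1$.

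So your ``main obstacle'' is correctly located, but the mechanism the paper uses to overcome it is intersection-theoretic confinement to a slice, not ECH writhe bounds. Your proposed writhe approach might also work for the all-$\beta_{i,0}$ case (it parallels the ellipsoid argument), but you would still need to separate out the mixed-orbit case first, and there the answer is elementary.
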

\begin{proof}
The upper bound $\gapac \lll \T^{m-1}p\rrr(P(1,x)) \leq m$ follows from Proposition \ref{prop:ellipsoid_computation}, using the fact that $P(1,x)$ symplectically embeds into $E(1+\delta,x')$ for all $\delta > 0$ and $x'$ sufficiently large.
For $m = 2d+1$ with $ed \geq 0$, 
the upper bound $\gapac \lll \T^{2d}p\rrr(P(1,x)) \leq x + d$ follows
from the symplectic embedding of $P(1,x)$ into $\CP^1(1) \times \CP^1(x)$,
together with the fact that the count of curves in $\CP^1 \times \CP^1$ in bidegree $(d,1)$ and full codimension tangency constraint at a point is nontrivial, i.e. $\gw_{\CP^1 \times \CP^1,A = d[D_1] + [D_2]}\lll \T^{2d}p\rrr \neq 0$ (see \cite[Example 5.1.4]{enumerating}).

As for the necessary lower bounds, consider a curve or building $u$ as in the definition of $\gapac\lll \T^{m-1}p\rrr(P(1,x))\rrr$.
As in \cite[\S 5.3]{hutchings2016beyond}, we can find a $C^0$-small perturbation $\wt{P}(1,x)$ of $P(1,x)$ such that in an arbitrarily large action window the Reeb orbits are of the following form:
\begin{enumerate}
\item $\alpha_{i,j}$ for $(i,j) \in \Z_{\geq 1}^2$ with $\cz(\alpha_{i,j}) = 2i + 2j$ and $\calA(\alpha_{i,j}) \approx i + jx$
\item $\beta_{i,j}$ for $(i,j) \in \Z_{\geq 0}^2 \setminus \{(0,0)\}$ with $\cz(\beta_{i,j}) = 1 + 2i + 2j$ and $\calA(\beta_{i,j}) \approx i + jx$.
\end{enumerate}
Here the pair of orbits $\alpha_{i,j},\beta_{i,j}$ is roughly related to the fact that the strict product of an orbit in $\bdy B^2(1)$ and an orbit in $\bdy B^2(x)$ gives rise to a Morse--Bott circle of Reeb orbits, which then perturbs to two Reeb orbits with Conley--Zehnder indices differing by $1$.
Meanwhile, $\beta_{i,0}$ and $\beta_{0,i}$ roughly correspond to orbits of the form $\bdy B^2(1) \times \{0\}$ and $\{0\} \times \bdy B^2(x)$  respectively in $\bdy \wt{P}(1,x)$.
See also \cite[\S5.3]{chscI} for more details.

Suppose first that at least one of the positive ends of $u$ is not of the form $\beta_{i,0}$ for some $i$.
It then follows from straightforward action and index considerations that the energy of $u$ is at least $x + \lceil\tfrac{m-1}{2}\rceil$.
For example, suppose that $u$ has positive ends $\beta_{0,s},\beta_{s_1,0},...,\beta_{s_k,0}$ for $k,s_1,...,s_k \geq 1$.
We then have
$$0 = \ind(u) = 2s + 2k + 2\sum_{i=1}^k s_i -  2m,$$
i.e. $\sum_{i=1}^k s_i = m -s - k$.
Since $s_i \geq 1$ for all $i$, we have
$$\sum_{i=1}^k s_i \geq m - s - \sum_{i=1}^k s_i,$$
and hence $\sum_{i=1}^k s_i \geq \tfrac{m-s}{2}$.
This gives
\begin{align*}
\calA(u) &= sx + \sum_{i=1}^ks_i\\
&\geq x + s-1 + \tfrac{m-s}{2}\\
&\geq x + \lceil \tfrac{m-1}{2}\rceil
\end{align*}
as desired.
The other cases can be handled similarly.

Otherwise, we can assume that the positive ends of $u$ are of the form 
$\Gamma^+ = (\beta_{s_1,0},...,\beta_{s_k,0})$ for some $k,s_1,...,s_k \geq 1$, and it suffices to show that the energy $\sum_{i=1}^k s_i$ of $u$ is at least $m$.
Arguing as in Addendum~\ref{addendum:chscI_comps}, we can arrange that any such curve must be contained in the two-dimensional slice corresponding to the symplectic completion of $B^2(1) \times \{0\}$.
But then $u$ must be a Hurwitz cover of this slice, and in particular we must have $m = \sum_{i=1}^k s_i$.
\end{proof}

\sss

As mentioned in the introduction, the above computations of $\gapac\lll \T^{m-1}p\rrr$ give various new obstructions for stabilized embedding problems. 
However, these capacities do not recover many of the optimal obstructions underlying Theorem \ref{thm:stab_ell_known_results}, and hence they do not generally give optimal obstructions to Question
\ref{question:stab_ell}.
On the other hand, we now explain why the more general capacities $\gapac_{\bb}(E(1,x))$ for $\bb \in \ovl{S}\K[t]$ do recover the obstructions of Theorem \ref{thm:stab_ell_known_results}, and why we expect Conjecture \ref{conj:complete_obstructions} to hold (see also Addendum \ref{addendum:chscI_comps}).
To keep the exposition concrete, we consider the symplectic embedding problem
$$ E(1,13/2) \times \C^{n-2} \hookrightarrow B^4(c) \times \C^{n-2},$$
and for convenience we work with the increasing action filtrations over $\K$ as in the introduction.

According to \cite{CG-Hind_products}, the optimal obstruction is $c \geq \tfrac{13}{5}$.
By constrast, the Ekeland--Hofer capacities give the weaker obstruction $c \geq 2$,
and it can be shown that the capacities $\gapac\lll \T^{m-1}p\rrr$ also only give $c \geq 2$ (see \cite{enumerating2}).
After slightly perturbing $E(1,13/2)$ to $E(1,13/2+\delta)$ for $\delta > 0$ sufficiently small, let $\alpha_1$ and $\beta_1$ denote the simple Reeb orbits of actions $1$ and $13/2+\delta$ respectively.
Including iterates, in order of increasing action the orbits are 
$\alpha_1,\alpha_2,\alpha_3,\alpha_4,\alpha_5,\alpha_6,\beta_1,\alpha_7,...$, with Conley--Zehnder indices $3,5,7,9,11,13,15,17,...$, etc.
Similarly, after perturbing $B^4(c)$ to $E(c,c+\delta')$,
let $\alpha_1'$ and $\beta_1'$ denote the primitive Reeb orbits of actions $1$ and $1+\delta'$ respectively, and including iterates we have Reeb orbits $\alpha_1',\beta_1',\alpha_2',\beta_2',\alpha_3',\beta_3',...$, etc.

Recall that the differentials on $\bar\chlin(E(c,c+\delta'))$ and $\bar\chlin(E(1,13/2+\delta))$ are both trivial for degree parity reasons, and hence passing to homology has no effect. We also have that the cobordism map 
$\bar\chlin(E(c,c+\delta')) \rightarrow \bar\chlin(E(1,13/2+\delta))$ is an isomorphism (with inverse given by the cobordism in the reverse direction up to scaling).
The maps $\wh{\aug}\lll \TT p\rrr: \bar\chlin(E(c,c+\delta'))\rightarrow \ovl{S}\K[t]$ and $\wh{\aug}\lll \TT p \rrr: \bar\chlin(E(1,13/2+\delta)) \rightarrow \ovl{S}\K[t]$ are also isomorphisms, using the identification of $\ovl{S}\K[t]$ with $\bar\chlin(E_\sk)$.
In particular, for any given $\bb \in \ovl{S}\K[t]$, there is a unique element $w \in \bar\chlin(E(1,13/2+\delta))$ such that $\wh{\aug}\lll \TT p\rrr(w) = \bb$, and the capacity $\gapac_{\bb}(E(1,13/2+\delta))$ is simply the action of $w$.

The Cristofaro-Gardiner--Hind obstruction comes from the existence
of a curve $C$ in the complementary cobordism $E(c,c+\delta') \setminus E(1,13/2+\delta)$ with five positive ends each asymptotic to $\beta_1'$, and one negative end asymptotic to $\alpha_{13}$ (see \cite{CG-Hind_products}).
Such a curve has Fredholm index zero, and Stokes' theorem gives $5c \geq 13$. 
The word $\beta_l \odot \beta_l \odot \beta_l \odot \beta_l \odot \beta_l$ defines an element in $\bar\chlin(E(c,c+\delta'))$,
and its image $y \in \ovl{S}\chlin(E(1,13/2+\delta))$ is $\alpha_s^{13}$ 
plus a sum of elements of word length at least two.
Let $\bb$ be the image of $\beta_l \odot \beta_l \odot \beta_l \odot \beta_l \odot \beta_l$ under the map
$\wh{\aug}\lll \TT p\rrr: \bar\chlin(E(c,c+\delta')) \rightarrow \ovl{S}\K[t]$.
Note that, by the above observation about vanishing differentials, $y \in \ovl{S}\chlin(E(1,13/2+\delta))$ is the unique element mapping to $\bb \in \ovl{S}\K[t]$, so we have $\gapac_{\bb}(E(1,13/2)) = \calA(y) \geq 13$ (this relies on the existence of $C$).
We then have $\gapac_{\bb}(B^4(c)) = 5c$ and 
$\gapac_{\bb}(E(1,13/2)) \geq 13$, and these together recover the optimal obstruction $c \geq \tfrac{13}{5}$.
A similar argument shows that any obstruction coming from the existence of a somewhere injective rational curve with one negative end should be visible to $\gapac_{\bb}$ for some $\bb \in \ovl{S}\K[t]$.

\bibliographystyle{plain}
\bibliography{HSC_biblio}

\end{document}